\documentclass[11pt,a4paper]{article}
\usepackage[utf8]{inputenc}
\usepackage[T1]{fontenc}
\usepackage[english,french]{babel}
\usepackage{fancyhdr}		
\usepackage{graphicx}
\usepackage{colortbl}
\usepackage{footnote}
\usepackage{makeidx}
\usepackage{amsfonts}
\usepackage{mathrsfs}
\usepackage{indentfirst}
\usepackage{shorttoc}
\usepackage[nottoc, notlof, notlot]{tocbibind}
\usepackage{graphicx}
\usepackage{amsmath, amsthm, amssymb}
\addtolength{\hoffset}{0cm}
\addtolength{\textwidth}{0cm}

\usepackage{geometry}
\geometry{ hmargin=2.8cm, vmargin=3cm }
\setlength{\headsep}{1cm}
\setlength{\textwidth}{15cm}
\setlength{\textheight}{24cm}
\usepackage{multicol}
\usepackage{multirow}
\usepackage[all]{xy}
\usepackage{chapterbib}
\usepackage{young}
\usepackage{stmaryrd}
\usepackage{tocbibind}
\usepackage{cite}
\usepackage{dsfont}
\usepackage{tikz}
\usepackage{MnSymbol}
\usetikzlibrary{shapes}
\usepackage{tikz-cd}
\usetikzlibrary{arrows}

\def\ker{\mathop{\rm ker}\nolimits}

\newtheoremstyle{plain}
{\topsep}
{\topsep}
{\itshape}
{}
{\bfseries}
{.}
{ }
{\thmname{#1}\thmnumber{ \textup{#2}}\thmnote{~: \textup{#3}}}
\theoremstyle{plain}
\newtheorem{theoreme}{Theorem}[section]

\newtheorem{proposition}[theoreme]{Proposition}
\newtheorem{lemma}[theoreme]{Lemma}
\theoremstyle{definition}
\newtheorem*{definition}{Definition}
\theoremstyle{definition}
\newtheorem*{remark}{Remark}
\theoremstyle{definition}

\theoremstyle{definition}

\theoremstyle{definition}

\usepackage{etoolbox}
\patchcmd{\thebibliography}{\chapter*}{\section*}{}{}

\title{\Large{\textbf{On differential operators on complete symmetric varieties of type $A_1$ and $A_2$}}}
\author{\textsc{Benoît Dejoncheere}\footnote{Université de Lyon, Université Claude Bernard Lyon 1, CNRS UMR 5208, Institut Camille Jordan, 43 bd. du 11 novembre 1918, F-69622 Villeurbanne Cedex, France (\texttt{dejoncheere@math.univ-lyon1.fr})}}
\date{}

\begin{document}

{\let\newpage\relax\maketitle}

\selectlanguage{english}
\begin{abstract}
In this paper, we will look at the algebra of global differential operators $D_X$ on wonderful compactifications $X$ of symmetric spaces $G/H$ of type $A_1$ and $A_2$. We will first construct a global differential operator on these varieties that does not come from the infinitesimal action of $\mathfrak{g}$. We will then focus on type $A_2$, where we will show that $D_X$ is an algebra of finite type, and that for any invertible sheaf ${\cal L}$ on $X$, $H^{0}(X,{\cal L})$ is either 0 or a simple left $D_{X,{\cal L}}$-module. Finally, we will show with the help of local cohomology that this is still true for higher cohomology groups $H^{i}(X,{\cal L})$.
\end{abstract}

\section{Introduction}
In order to answer a Kazhdan-Lusztig conjecture linking characters of some Verma modules and characters of irreducible highest weight modules, Beilinson and Bernstein in \cite{BB} and Brylinski and Kashiwara in \cite{BK} have found out in the early 80's interesting properties on the sheaf of differential operators ${\cal D}_X$ on flag varieties $X$, and on its global sections algebra $D_X$. For example, we know that there is an equivalence of categories between left ${\cal D}_X$-modules which are quasi-coherent as ${\cal O}_X$-modules, and left $D_X$-modules (and we say that $X$ is ${\cal D}$-affine). An other example is that when $X$ is a flag variety, the morphism $L:\mathfrak{U}(\mathfrak{g})\to D_X$ given by the infinitesimal action of $\mathfrak{g}$ is surjective, and its kernel has an explicit description. This was more precisely investigated by Borho and Brylinski in \cite{BoB1}, \cite{BoB2} and \cite{BoB3}. \\
\indent
In the 90's, differential operators on toric varieties have been studied (for example in \cite{Mu} and \cite{Jo}) using combinatorial tools and a combinatorial description of toric varieties, and are rather well understood. But except for flag varieties and projective toric varieties, differential operators on projective varieties are not well understood. In this paper, we will look at the algebra of global differential operators $D_Y$ on some wonderful compactifications $Y$ of symmetric spaces $G/H$ of small positive rank. These varieties are wonderful varieties, which are natural generalization of flag varieties, since they are smooth projective varieties with a $G$-action with good properties. In particular, flag varieties $G/P$ are exactly the $G$-wonderful varieties of rank 0.\\
\indent
In the section 2, we will construct a differential operator on wonderful compactifications $Y$ of symmetric spaces $G/H$ with reduced root system $\tilde{\Phi}$ of type $A_1$ and $A_2$ which does not lie in the image of $L:\mathfrak{U}(\mathfrak{g})\to D_Y$. When $\tilde{\Phi}$ is of type $A_1$, we will see that this is actually not a surprise, since these varieties are actually flag varieties for a group $G'$ bigger than $G$, and we will then focus on some cases of type $A_2$.\\
\indent
We will use a result of Thaddeus (cf. \cite{Th2}) to see the wonderful compactifications $Y$ of $\textrm{PGL}_n$, $\textrm{PGL}_n/\textrm{PSO}_n$ and of $\textrm{PGL}_{2n}/\textrm{PSp}_{2n}$ as direct limit of GIT quotients of some Grassmannian $X$ by a $\mathbb{C}^*$-action. In section 3, we will show that this direct limit of GIT quotients is exactly the GIT quotient for an ample line bundle $\overline{\cal L}$ on $X$ with trivial $\mathbb{C}^*$-linearization when $n=3$. Using this description, we will be able to show that $D_Y$ is of finite type, and that the $H^0(Y,{\cal L})$ are either 0 or simple left $D_{Y,{\cal L}}$-modules, where ${\cal D}_{Y,{\cal L}}$ is the sheaf of differential operators of ${\cal L}$, and $D_{Y,{\cal L}}$ is the algebra of its global sections.\\
\indent
In these proofs, we will use the fact that the set of unstable points is of codimension at least two, and that allows us to extend uniquely sections of locally free sheaves on the set of semi-stable points $X^{ss}$ to sections of locally free sheaves on $X$. However, in higher cohomology we do not have $H^i(X,\overline{\cal L})=H^i(X^{ss},\overline{\cal L}_{|X^{ss}})$ anymore. In section 4, we will recall a few properties about local cohomology, and we will define generalized Cousin complexes in order to have an approximation of the cohomology group $H^i_{Z}(X,\overline{\cal L})$ when $Z$ is a Schubert variety of codimension $i$ in $X$. Using this complex, we will be able to show that the higher cohomology groups $H^i(Y,{\cal L})$ are either 0 or simple left $D_{Y,{\cal L}}$-modules.
\\

\textit{Acknowledgements} : I would like to thank Alexis Tchoudjem for his many useful remarks.

\section{Construction of a differential operator}
\subsection{Setup}
If $G$ is a connected reductive group, we say a $G$-variety $X$ is wonderful (cf. \cite{Lu}) if it is smooth, complete, projective, with a unique open $G$-orbit $\Omega$, and if $X\setminus\Omega$ is a union of $r$ prime $G$-stable divisors $X_1,\ldots X_r$ with normal crossings, such that for all $x,y$ in $X$, $x$ and $y$ are in the same $G$-orbit if and only if they belong to the same $X_i$, and that the intersection of all $X_i$'s is non-empty. We call $r$ the rank of the wonderful variety $X$.\\
\indent
Let $G$ be a connected semi-simple adjoint algebraic group, and $\theta:G\to G$ an involution of $G$. As constructed in \cite{DCP}, the symmetric space $G/G^{\theta}$ can be uniquely embedded into a wonderful $G$-variety $X$ with an open orbit isomorphic to $G/G^{\theta}$. It is done by choosing a good highest weight module $V_{\lambda}$ with a special regular weight $\lambda$ (in the sense of \cite{DCP}), and a nonzero $h\in V_{\lambda}^{G^{\theta}}$, and $X$ is the closure of $G.[h]$ in $\mathbb{P}(V_{\lambda})$. Let un choose a maximal $\theta$-split torus $T$ (ie. a maximal torus such that the dimension $r$ of $T_1:=\{x\in X|\theta(x)=x^{-1}\}$ is maximal), and let $B$ be a Borel subgroup containing $T$ such that each positive root which is not fixed by $\theta$ is sent to a negative one. Let us denote by $\Phi_0$ the subset of $\theta$-fixed roots in the root system $\Phi(G,T)$, and by $\Phi_1$ its complement. One can show that $X$ is a wonderful variety of rank $r$. The choice of $B$ and $T$ gives rise to the little root system $\tilde{\Phi}$, which is the (eventually non-reduced) root system of restricted roots in $\Phi_1$ to $T_1$. The restrictions to $T_1$ of simple non-$\theta$-fixed roots are the simple roots of $\tilde{\Phi}$, and let $\gamma_1,\ldots,\gamma_r$ be the double of these simple restricted roots.\\
\indent
One can construct an open $B$-cell $X_0\subset X$ that is isomorphic to the affine space $U'\times\mathbb{A}^r$ where $U'=\prod\limits_{\alpha\in\Phi_1^+}U_{\alpha}$, and where $\mathbb{A}^r$ is the closure of $T_1.[h]$, which is embedded with the morphism
\[
\begin{tabular}{rcccl}
$T_1$&$\to $&$T_1.[h]$&$\hookrightarrow $&$\mathbb{A}^r$\\
$t$&$\mapsto $&$t.[h]$&$\mapsto $&$(\gamma_1(t),\ldots,\gamma_r(t))$
\end{tabular}
\]
The aim of this section is to construct differential operators on $X_0$ that are restrictions to $X_0$ of global differential operators that do not come from the infinitesimal action of $\mathfrak{g}$ when $\tilde{\Phi}$ is of type $A_1$ or $A_2$.
\subsection{Satake diagrams}
Like semi-simple Lie algebras which can be classified with Dynkin diagrams, symmetric spaces can be classified by the Satake diagrams, which we are going to recall the definition :
\begin{definition}
Let $G$ be a semi-simple algebraic group and $\theta:G\to G$ an involution. The Satake diagram of the symmetric space $G/G^{\theta}$ is a diagram constructed from the Dynkin diagram of $G$ by adding :
\item[(1)] a coloration on vertices : black for vertices representing roots in $\Phi_0$, white for the other ones ;
\item[(2)] two-headed arrows between vertices representing different simple roots $\alpha_i$ and $\alpha_j$ such that
\[
-\theta(\alpha_i)=\alpha_j+\sum\limits_{\beta_i\in\Phi_0}n_i\beta_i
\]
for $n_i\geq 0$.
\end{definition}
One can notice that each white vertex is linked to at most one white vertex with two-headed arrows (cf. \cite{DCP} 1.3
). The Satake diagram of the symmetric space $G/H$ is useful to read easily the action of $\theta$ on roots. This action being linear, it is enough to understand how it acts on simple roots :
\begin{proposition}
Let $\alpha$ be a simple root.
\item[(1)] if $\alpha$ is a black root, then $\theta(\alpha)=\alpha$ ;
\item[(2)] if $\alpha$ is a white root linked to no other white root with two-headed arrows, then $-\theta(\alpha)$ is the highest root which can be written as $\alpha+\sum\limits_{\beta_i\in\Phi_0}n_i\beta_i$ ;
\item[(3)] if $\alpha$ is a white root linked to an other white root $\alpha'$ with a two-headed arrow, then $-\theta(\alpha)$ is the highest root which can be written as $\alpha'+\sum\limits_{\beta_i\in\Phi_0}n_i\beta_i$. Moreover, $\alpha-\alpha'-\theta(\alpha)$ is the highest root which can be written as $\alpha+\sum\limits_{\beta_i\in\Phi_0}n_i\beta_i$.
\end{proposition}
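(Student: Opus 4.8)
The plan is to read everything off a single auxiliary involution, after recording two elementary consequences of the choices of $B$ and $T$. Part (1) is immediate: a black vertex represents by definition a root $\alpha\in\Phi_0$, i.e.\ a $\theta$-fixed root, so $\theta(\alpha)=\alpha$. For (2) and (3), note first that since $T$ is $\theta$-stable, $\theta$ acts on $X^*(T)$, permutes $\Phi$, satisfies $\theta^2=\id$, and has $\Phi_0=\{\gamma:\theta(\gamma)=\gamma\}$. The compatibility of $B$ gives $\theta(\Phi^+\setminus\Phi_0)\subseteq\Phi^-$, so $-\theta$ maps $\Phi^+\setminus\Phi_0^+$ bijectively to itself. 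Second, writing $p$ for restriction of characters to $T_1$, the fact that $\theta$ acts as inversion on $T_1$ gives $p\circ\theta=-p$; hence $\ker p\otimes\mathbb{Q}$ is exactly the $\theta$-fixed subspace, and $p(-\theta(\gamma))=p(\gamma)$ for every weight $\gamma$.

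The central object is $\iota:=-w_0\theta$, where $w_0$ is the longest element of the Weyl group $W_0$ of $\Phi_0$. I would check that $\iota$ is an involutive automorphism of $\Phi$ preserving $\Phi^+$: on $\Phi_0^+$ one has $\iota=-w_0$, which preserves $\Phi_0^+$; on $\Phi^+\setminus\Phi_0^+$ one combines $\theta(\Phi^+\setminus\Phi_0)\subseteq\Phi^-$ with the fact that $w_0\in W_0$ only modifies the coefficients of the simple roots in $\Phi_0$ and so preserves $\Phi^+\setminus\Phi_0^+$. An automorphism of $\Phi$ fixing $\Phi^+$ permutes the simple roots, and since $\iota$ also preserves $\Phi_0$ it permutes the white simple roots. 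Writing $\alpha^*:=\iota(\alpha)$ for a white simple $\alpha$, applying $w_0$ to $\iota(\alpha)=\alpha^*$ gives $-\theta(\alpha)=w_0(\alpha^*)$. As each reflection $s_\beta$ ($\beta\in\Phi_0$ simple) sends $\alpha^*$ to $\alpha^*-\langle\alpha^*,\beta^\vee\rangle\beta$ with $\langle\alpha^*,\beta^\vee\rangle\le 0$, one gets $w_0(\alpha^*)=\alpha^*+\sum_{\beta_i\in\Phi_0}n_i\beta_i$ with $n_i\ge 0$, and $w_0(\alpha^*)$ is $\Phi_0$-dominant, hence the highest root of that form. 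Comparing with the Definition, the unique white simple root in the support of $-\theta(\alpha)$ is $\alpha^*$, so a two-headed arrow joins $\alpha$ to $\alpha^*$ exactly when $\alpha^*\neq\alpha$. This yields (2) (no arrow forces $\alpha^*=\alpha$) and the first assertion of (3) (an arrow to $\alpha'$ forces $\alpha^*=\alpha'$).

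For the \emph{moreover} in (3), I would exploit the restriction map. From $-\theta(\alpha)=w_0(\alpha')=\alpha'+\sum n_i\beta_i$ and $p(-\theta(\alpha))=p(\alpha)$, applying $p$ (which kills $\Phi_0$) gives $p(\alpha)=p(\alpha')$, so $\alpha-\alpha'\in\ker p$ is $\theta$-fixed. Using $\theta(\alpha)=-w_0(\alpha')$ one computes $\alpha-\alpha'-\theta(\alpha)=\alpha-\alpha'+w_0(\alpha')=\alpha+\sum_{\beta_i\in\Phi_0}n_i\beta_i$. Finally, since $\alpha^*=\alpha'$ and $\iota$ is an involution we also have $\theta(\alpha')=-w_0(\alpha)$, whence $\theta(\alpha-\alpha')=w_0(\alpha-\alpha')$; combined with $\theta(\alpha-\alpha')=\alpha-\alpha'$ this shows $w_0(\alpha-\alpha')=\alpha-\alpha'$, and therefore $\alpha-\alpha'-\theta(\alpha)=w_0(\alpha)$, which by the previous paragraph is exactly the highest root of the form $\alpha+\sum_{\beta_i\in\Phi_0}n_i\beta_i$.

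The main obstacle is the middle step: proving that $\iota=-w_0\theta$ genuinely permutes the simple roots. Once this is in place the rest is bookkeeping, but it requires the full strength of the compatibility between $B$, $T$ and $\theta$ (that $\theta$ sends non-fixed positive roots to negative ones) together with the stability of $\Phi^+\setminus\Phi_0^+$ under $W_0$; keeping track of the positivity and $\Phi_0$-dominance statements with the correct signs is where care is needed.
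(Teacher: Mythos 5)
Your argument is correct and reaches the statement by a genuinely different route from the paper. The paper's proof is a short coefficient computation: it takes for granted (from the structure of Satake diagrams, cf.\ \cite{DCP} 1.3) that $-\theta(\alpha)$ already has the shape $\alpha^*+\sum n_i\beta_i$ for some white simple root $\alpha^*$, then compares an arbitrary root $\delta=\alpha+\sum n'_i\beta_i$ of that shape with $-\theta(\alpha)$, using only that $\theta$ fixes $\Phi_0$ pointwise and sends positive non-fixed roots to negative ones: $-\theta(\delta)=\alpha+\sum(n_i-n'_i)\beta_i$ is again a positive root, forcing $n'_i\le n_i$. You instead construct the involution $\iota=-w_0\theta$ (note $\theta$ commutes with $W_0$ precisely because it fixes $\Phi_0$ pointwise, whence $\iota^2=\id$), show it preserves $\Phi^+$ and therefore permutes the white simple roots, and read the whole proposition off $-\theta(\alpha)=w_0(\iota(\alpha))$. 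This buys you a proof of the structural fact the paper assumes, and a clean derivation of the \emph{moreover} in (3) via $\theta(\alpha-\alpha')=\alpha-\alpha'=w_0(\alpha-\alpha')$, where the paper only appeals to ``symmetry''. One step of yours is under-justified: $\Phi_0$-dominance of $w_0(\alpha^*)$ does not by itself make it the \emph{highest} root of the form $\alpha^*+\sum n_i\beta_i$, since a $W_0$-stable set of weights may split into several orbits with incomparable maximal elements. You already hold the fix: for any root $\gamma=\alpha^*+\sum m_i\beta_i$, the root $w_0(\gamma)$ lies in $\Phi^+\setminus\Phi_0$ and is congruent to $\alpha^*$ modulo $\mathbb{Z}\Phi_0$, hence equals $\alpha^*$ plus a non-negative combination of the $\beta_i$, i.e.\ $w_0(\gamma)\ge\alpha^*$; applying $w_0$ once more gives $\gamma\le w_0(\alpha^*)$. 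The same observation repairs the slightly loose reflection-by-reflection induction you use to get $w_0(\alpha^*)-\alpha^*\in\mathbb{Z}_{\ge 0}\Phi_0$. With that patch your proof is complete, and somewhat more self-contained than the one in the text.
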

\begin{proof}
(1) is just the definition of a black root. To show (2), let us write
\[
-\theta(\alpha)=\alpha+\sum\limits_{\beta_i\in\Phi_0}n_i\beta_i
\]
Let $\delta=\alpha+\sum n'_i\beta_i$ be a highest root which can be written like this such that $\delta+\theta(\alpha)$ has non-negative coefficients. Then $n'_i\geq n_i$, and since $\delta\in\Phi_1$ is positive, $-\theta(\delta)$ is negative, so $n_i\geq n'_i$. Hence $-\theta(\alpha)=\delta$, and $\delta$ is the unique highest root which can be written as $\delta=\alpha+\sum n'_i\beta_i$. The proof of (3) is similar, using the symmetry between $\alpha$ and $\alpha'$.
\end{proof}
Thanks to Satake diagrams, we can also show that we can reduce ourselves without loss of generality to the following cases :\\
\begin{itemize}
\item[(1)] $G/H$ with simple $G$ ;
\item[(2)] $G\times G/G$ with simple $G$, and $\theta : (g_1,g_2)\mapsto (g_2,g_1)$
\end{itemize}
\begin{proposition}
If $G$ is semi-simple and $H=G^{\theta}$, the Satake diagram $S$ of $G/H$ is a disjoint union of Satake diagrams of $G'/H'$ with simple $G'$ and $G''\times G''/G''$ with simple $G''$
\end{proposition}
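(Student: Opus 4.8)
The plan is to work at the level of the Lie algebra and exploit the fact that $\theta$ permutes the simple factors of $\mathfrak{g}$. Write $\mathfrak{g}=\mathfrak{g}_1\oplus\cdots\oplus\mathfrak{g}_k$ as the sum of its simple ideals. Since $\theta$ induces an automorphism of $\mathfrak{g}$, it permutes the minimal nonzero ideals, hence induces a permutation $\sigma$ of $\{1,\ldots,k\}$; as $\theta^2=\id$, the permutation $\sigma$ is an involution, so it is a product of fixed points and transpositions. First I would choose the maximal $\theta$-split torus $T$ and the Borel $B$ compatibly with this factorization, that is $T=\prod_i T_i$ and $B=\prod_i B_i$ with $T_i\subset G_i$, so that $\Phi(G,T)$ is the disjoint union of the $\Phi(G_i,T_i)$ and the Dynkin diagram of $G$ is the disjoint union of those of the $G_i$.

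Next I would treat the two kinds of $\sigma$-orbits separately. If $\sigma$ fixes an index $i$, then $\theta$ restricts to an involution of $\mathfrak{g}_i$, and the vertices of the Dynkin diagram of $G_i$, together with their colours and the two-headed arrows among them, form the Satake diagram of a symmetric space $G'/H'$ with $G'$ simple. If $\sigma$ exchanges two indices $i\neq j$, then $\theta$ maps $\mathfrak{g}_i$ isomorphically onto $\mathfrak{g}_j$, and its restriction to $\mathfrak{g}_j$ is the inverse isomorphism; identifying $\mathfrak{g}_j$ with $\mathfrak{g}_i$ through this map turns $\theta$ on $\mathfrak{g}_i\oplus\mathfrak{g}_j$ into the swap $(x,y)\mapsto(y,x)$ on $\mathfrak{g}''\oplus\mathfrak{g}''$ with $\mathfrak{g}''\cong\mathfrak{g}_i$ simple, which is exactly case (2).

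It remains to check that the Satake data respect this block decomposition, which is the real content. For a simple root $\alpha$ belonging to $G_i$, the root $\theta(\alpha)$ lies in $G_{\sigma(i)}$; hence $\alpha$ is $\theta$-fixed (black) only if $\sigma(i)=i$, and no black vertex ever sits in a swapped factor. Likewise the relation $-\theta(\alpha_i)=\alpha_j+\sum_{\beta\in\Phi_0}n_\beta\beta$ defining a two-headed arrow forces $\alpha_j$ and every $\beta$ occurring to lie in $G_{\sigma(i)}$, so that each arrow stays inside a single block. In the fixed case this recovers the Satake diagram of $G'/H'$ unchanged, while in the swapped case there are no black roots in $G_i\cup G_j$, so the sum over $\Phi_0$ vanishes and $-\theta$ sends positive roots of $G_i$ to positive roots of $G_j$; being a bijection of root systems preserving positivity, it carries simple roots to simple roots, and the arrows then link each vertex of $G_i$ to exactly one vertex of $G_j$, which is precisely the Satake diagram of $G''\times G''/G''$.

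The main obstacle I anticipate is this last verification: establishing that in the transposition case $-\theta$ is an isomorphism of based root systems, so that the two-headed arrows form a perfect matching between the two factors and no spurious black vertices appear, together with the preliminary point that the maximal $\theta$-split torus and the adapted Borel can indeed be chosen to factor through the simple ideals. Both reduce to standard facts about automorphisms permuting the simple summands, but they are what makes the disjoint-union statement hold on the nose rather than merely up to Dynkin-diagram combinatorics.
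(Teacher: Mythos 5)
Your argument is correct, but it is organized differently from the paper's. The paper stays entirely inside the combinatorics of the Satake diagram: it distinguishes connected components of the underlying Dynkin diagram from connected components of the full diagram, and its key step is that if a two-headed arrow joins $\alpha$ to an $\alpha'$ in a \emph{different} Dynkin component, then the black roots $\beta_i$ appearing in $-\theta(\alpha)=\alpha'+\sum n_i\beta_i$ would have to lie in the component of $\alpha'$ (connectedness of the support of a root) and, by symmetry of the roles of $\alpha$ and $\alpha'$, also in the component of $\alpha$ --- forcing $-\theta(\alpha)=\alpha'$ on the nose; the $\theta$-invariance of the Killing form then matches up the two Cartan matrices and yields the $G''\times G''/G''$ diagram. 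You instead start one level up, from the fact that $\theta$ permutes the simple ideals of $\mathfrak{g}$ by an involution $\sigma$, so the group-level decomposition into fixed factors ($G'/H'$, $G'$ simple) and swapped pairs ($G''\times G''/G''$) is immediate, and the diagram-level statements (no black vertices in a swapped pair, arrows confined to a single block, $-\theta$ an isomorphism of based root systems giving a perfect matching) are then read off as consequences. What your route buys is conceptual clarity and the group-theoretic statement directly, essentially for free from ``automorphisms permute minimal ideals''; what the paper's route buys is a self-contained diagrammatic proof that never leaves the Satake formalism and makes explicit \emph{why} a cross-component arrow can carry no $\Phi_0$-correction. The two auxiliary points you flag --- that $T$ and $B$ factor through the simple ideals, and that $-\theta$ preserves positivity on $\Phi_1$ --- are indeed the only things to check, and both follow from the setup of Section 2.1 (any maximal torus and Borel of a product decompose accordingly, and $B$ is chosen so that $\theta$ sends positive non-fixed roots to negative ones), so there is no gap.
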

\begin{proof}
Let us call \textit{connected component} of $S$ a connected component of the underlying Dynkin diagram of $G$ (we forget two-headed arrows), and \textit{strictly connected component} a connected component of $S$ (where two-headed arrows are not forgotten). Let $\alpha$ a white arrow such that $-\theta(\alpha)=\alpha'+\sum n_i\beta_i$ and $\alpha'$ is not in the connected component of $\alpha$. Since $-\theta(\alpha)$ is a root, the $\beta_i$ lie in the connected component of $\alpha'$. The role of $\alpha$ and $\alpha'$ being symmetric, they lie in the connected component of $\alpha$ as well. Hence $-\theta(\alpha)=\alpha'$, and $\alpha$ and $\alpha'$ are only linked to white roots. The Killing form being $\theta$-invariant, the Cartan matrices of the connected components of $\alpha$ and $\alpha'$ are the same, and by fixing a numerotation on the simple roots of a simple $G''$ associated to this Cartan matrix, we get $-\theta(\alpha_i)=\alpha'_i$. Hence the strictly connected component of $\alpha$ is the Satake diagram of $G''\times G''/G''$. If in a given connected component such an $\alpha$ does not exist, then it is a strictly component of $S$, and it corresponds to the Satake diagram of $G'/H'$ for a simple $G$.
\end{proof}
\subsection{Construction when $-\theta$ has no fixed white roots}
Let us assume that $-\theta$ has no fixed simple white roots. This hypothesis allows us to lighten the computations on $X=\overline{G/H}$ since :
\begin{proposition}
If $\alpha\neq-\theta(\alpha)$, then $[X_{\alpha},X_{\theta(\alpha)}]=0$. In particular, $e^{X_{\alpha}}$ and $e^{X_{\theta(\alpha)}}$ are commuting in $G$, and if $h\in X$ is in the open $G$-orbit such that its stabilizer $G_h=H$, then $e^{X_{\alpha}}.h=e^{-X_{\theta(\alpha)}}.h$
\end{proposition}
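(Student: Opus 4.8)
The plan is to treat the three assertions in turn, the first being the crux. For the bracket I would first normalise the root vectors so that $\theta(X_\alpha)=X_{\theta(\alpha)}$; this is legitimate because $\theta^2=\mathrm{id}$ pairs the lines $\mathfrak{g}_\alpha$ and $\mathfrak{g}_{\theta(\alpha)}$, and on a $\theta$-fixed root space the involution acts by $+1$ (justified below), so the choice is consistent. Writing $\mathfrak{g}=\mathfrak{h}\oplus\mathfrak{q}$ for the decomposition into the $(+1)$- and $(-1)$-eigenspaces of $\theta$ (so $\mathfrak{h}=\mathrm{Lie}(H)$), I would note that applying $\theta$ to $[X_\alpha,X_{\theta(\alpha)}]=[X_\alpha,\theta(X_\alpha)]$ gives $\theta[X_\alpha,\theta(X_\alpha)]=[\theta(X_\alpha),X_\alpha]=-[X_\alpha,\theta(X_\alpha)]$, so this bracket lies in $\mathfrak{q}$.

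On the other hand the bracket lies in the weight space $\mathfrak{g}_{\alpha+\theta(\alpha)}$, and since $\theta(\alpha+\theta(\alpha))=\alpha+\theta(\alpha)$ this weight is $\theta$-fixed. There are then three cases. If $\alpha+\theta(\alpha)$ is nonzero and not a root, then $\mathfrak{g}_{\alpha+\theta(\alpha)}=0$ and the bracket vanishes. The case $\alpha+\theta(\alpha)=0$ is excluded by the hypothesis $\alpha\neq-\theta(\alpha)$. In the remaining case $\gamma:=\alpha+\theta(\alpha)\in\Phi_0$, I claim $\mathfrak{g}_\gamma\subseteq\mathfrak{h}$; combined with $\mathfrak{h}\cap\mathfrak{q}=0$ this forces the bracket, which lies both in $\mathfrak{g}_\gamma$ and in $\mathfrak{q}$, to be zero. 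This settles the first assertion.

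The claim $\mathfrak{g}_\gamma\subseteq\mathfrak{h}$ for $\gamma\in\Phi_0$ is where the real work sits, and I expect it to be the main obstacle to making the argument airtight. The point is the maximality of the $\theta$-split torus $T_1$: its Lie algebra $\mathfrak{t}_1\subseteq\mathfrak{q}$ is a maximal abelian subspace of $\mathfrak{q}$, so that $\mathfrak{z}_\mathfrak{q}(\mathfrak{t}_1)=\mathfrak{t}_1$. The centraliser $\mathfrak{l}=\mathfrak{z}_\mathfrak{g}(\mathfrak{t}_1)=\mathfrak{t}\oplus\bigoplus_{\gamma\in\Phi_0}\mathfrak{g}_\gamma$ is $\theta$-stable, and its $(-1)$-eigenspace $\mathfrak{l}\cap\mathfrak{q}=\mathfrak{z}_\mathfrak{q}(\mathfrak{t}_1)$ equals $\mathfrak{t}_1\subseteq\mathfrak{t}$. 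Hence no root space $\mathfrak{g}_\gamma$ with $\gamma\in\Phi_0$ meets $\mathfrak{q}$, i.e. $\theta$ acts by $+1$ on each such $\mathfrak{g}_\gamma$ and $\mathfrak{g}_\gamma\subseteq\mathfrak{h}$; this is also what justifies the normalisation used above.

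The last two assertions are then formal. Since $X_\alpha$ and $X_{\theta(\alpha)}$ are nilpotent with vanishing bracket, $e^{X_\alpha}e^{X_{\theta(\alpha)}}=e^{X_\alpha+X_{\theta(\alpha)}}=e^{X_{\theta(\alpha)}}e^{X_\alpha}$, so the two unipotent elements commute. For the final identity I would set $u=e^{X_\alpha}$ and use $\theta(X_\alpha)=X_{\theta(\alpha)}$ to get $\theta(u)=e^{X_{\theta(\alpha)}}$; then $g:=\theta(u)\,u=e^{X_{\theta(\alpha)}}e^{X_\alpha}$ satisfies $\theta(g)=u\,\theta(u)=\theta(u)\,u=g$ by commutativity, so $g\in G^\theta=H=G_h$. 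Therefore $g\cdot h=h$, and applying $e^{-X_{\theta(\alpha)}}$ to both sides yields $e^{X_\alpha}\cdot h=e^{-X_{\theta(\alpha)}}\cdot h$, as claimed.
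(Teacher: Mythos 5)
Your argument is correct and its core is the same as the paper's: both reduce the vanishing of $[X_{\alpha},X_{\theta(\alpha)}]$ to the fact that $\theta$ acts as the identity on $\mathfrak{g}_{\gamma}$ for $\gamma\in\Phi_0$, and then derive a contradiction from $\theta$ acting by $-1$ on the bracket. The difference is one of self-containment: the paper simply cites this fact as a lemma from De Concini--Procesi (1.3), whereas you reprove it from the maximality of the $\theta$-split torus, and you also supply the final assertion $e^{X_{\alpha}}.h=e^{-X_{\theta(\alpha)}}.h$ (via $\theta(u)u\in G^{\theta}=G_h$), which the paper's proof leaves implicit after Campbell--Hausdorff. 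You correctly identified the delicate point; the only soft spot is your one-line derivation of $\mathfrak{z}_{\mathfrak{q}}(\mathfrak{t}_1)=\mathfrak{t}_1$. Maximality of $T_1$ as a $\theta$-split torus gives that $\mathfrak{t}_1$ is maximal among \emph{toral} subalgebras of $\mathfrak{q}$, not a priori among all abelian subspaces of $\mathfrak{q}$ (which may contain nilpotents in the algebraic setting, unlike the real Riemannian case). The fix is easy and stays within your framework: if $\theta$ acted by $-1$ on $\mathfrak{g}_{\gamma}$ for some $\gamma\in\Phi_0$, it would act by $-1$ on $\mathfrak{g}_{-\gamma}$ as well, and the \emph{semisimple} element $X_{\gamma}+X_{-\gamma}\in\mathfrak{q}$ would commute with $\mathfrak{t}_1$, producing a strictly larger toral subalgebra of $\mathfrak{q}$ and contradicting maximality. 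With that patch your proof is complete and in fact proves slightly more than the paper writes down.
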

\begin{proof}
By hypothesis, we already know that $\alpha+\theta(\alpha)\neq 0$. To show that it is not a root, we use the lemma (cf. \cite{DCP} 1.3) :
\begin{lemma}
If $\alpha\in\Phi_0$, then $\theta$ acts as the identity on $\mathfrak{g}_{\alpha}$.
\end{lemma}
Hence if $\alpha+\theta(\alpha)\neq 0$ is a root, it lies in $\Phi_0$, and $[X_{\alpha},X_{\theta(\alpha)}]\neq 0$. Since $\theta([X_{\alpha},X_{\theta(\alpha)}])=-[X_{\alpha},X_{\theta(\alpha)}]$, we have a contradiction. By Campbell-Hausdorff formula, we get the commutativity of $e^{X_{\alpha}}$ and $e^{X_{\theta(\alpha)}}$.
\end{proof}
This condition can be easily checked thanks to the Satake diagram : it just means that each white root is either linked to at least a black root, or connected to a white root by a two-headed arrow.\\
\indent
Since we have an isomorphism $\varphi : U'\times\mathbb{A}^r\to X_0$, by fixing coordinates $x_1,\ldots x_s$ on $U'$ we can get coordinates on $X_0$, and $k[X_0]=k[x_1,\ldots,x_s,t_1,\ldots,t_r]$. This choice of coordinates can be done by fixing a total order $\tilde{\prec}$ on $\Phi_1^+$. Let us write the roots in $\Phi_1^+$ $\tilde{\alpha}_1,\ldots,\tilde{\alpha}_s$ with $\tilde{\alpha}_i\tilde{\prec}\tilde{\alpha}_{i+1}$. We can represent a point $x$ in $X_0$ by a couple $(\prod u_{\tilde{\alpha}_i}(x_{\tilde{\alpha}_i}),(t_1,\ldots,t_r))$ such that on the open set $X_0\cap\Omega$ we have
\[
\varphi(\prod u_{\tilde{\alpha}_i}(x_{\tilde{\alpha}_i}),(t_1,\ldots,t_r))=\prod u_{\tilde{\alpha}_i}(x_{\tilde{\alpha}_i})a(t_1,\ldots,t_r).h=x
\]
where $a(t_1,\ldots,t_r)\in T_1/\textrm{Stab}_{T_1}(h)$ is such that $\gamma_i(a(t_1,\ldots,t_r))=t_i$. If we have an other total order $\prec$ on $\Phi_1^+$, we can also represent $x$ by a couple $(\prod u_{\alpha_i}(x_{\alpha_i}),(t_1,\ldots,t_r))$. Then we have
\[
k[X_0]=k[x_{\tilde{\alpha}_i},t_j]=k[x_{\alpha_i},t_j]
\]
and $x_{\alpha_i}$ are polynomials in the $x_{\tilde{\alpha}_i}$ and $t_j$. Moreover, the Weyl algebras
\[
k[x_{\tilde{\alpha}_i},t_j,\partial_{x_{\tilde{\alpha}_i}},\partial_{t_j}]=k[x_{\alpha_i},t_j,\partial_{x_{\alpha_i}},\partial_{t_j}]
\]
are isomorphic, and the $\partial_{x_{\alpha_i}}$ are polynomials in the $x_{\tilde{\alpha}_i},t_j,\partial_{x_{\tilde{\alpha}_i}}$ and $\partial_{t_j}$. Hence for all white root $\alpha$, we will choose a total order $\prec_{\alpha}$ on $\Phi_1^+$ which makes the computations easier. We will now state our results :
\begin{theoreme}
Let $G$ be a simple connected algebraic group of adjoint type, let $X$ be its wonderful compactification, and $T\subset B$ chosen as in 2.1. The following are equivalent :
\item[(a)] There exists a differential operator on the affine $B\times B^-$-cell $X_0$, which is a monomial in the $\partial_{t_i}$, that is the restriction to $X_0$ of a global differential operator that does not lie in the image of $\mathfrak{U}(\mathfrak{g})$ ;
\item[(b)] $\Phi$ is of type $A_1$ or $A_2$.
\end{theoreme}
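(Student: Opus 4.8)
The plan is to work entirely in the big cell $X_0\cong U\times U^-\times\mathbb{A}^r$, where (in the group case $G\times G/\operatorname{diag}G$ relevant here) the Borel of the acting group is $B\times B^-$, the restricted system $\tilde\Phi$ is a copy of $\Phi$, and $\gamma_i=2\alpha_i$. I would first write the infinitesimal action of $\mathfrak g\oplus\mathfrak g$ explicitly in the coordinates $x_\alpha,y_\beta,t_i$: the Cartan part produces the Euler fields $t_i\partial_{t_i}$, and each root vector $X_{\pm\alpha}$ produces a field whose $\partial_{t_i}$--component I would compute via the one--parameter subgroup acting on $u\,D\,w^{-1}$ with $\alpha_i(D)=t_i$. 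The point of this computation is that globality of an operator is detected cell by cell, i.e.\ by regularity along the boundary divisors $X_i=\{t_i=0\}$ and in the coordinate systems attached to the other orders $\prec_\alpha$; at the level of principal symbols it is detected by $H^0(X,\operatorname{Sym}^n T_X)$. I would record the basic dichotomy that $H^0(X,T_X)=\mathfrak g\oplus\mathfrak g$ except when $\Phi$ is of type $A_1$, where $X=\overline{\mathrm{PGL}_2}=\mathbb P^3$ is the flag variety of a larger group and $H^0(X,T_X)$ is strictly larger.

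For (b)$\Rightarrow$(a) I would construct the operator by hand. In type $A_1$, taking the chart of $\mathbb P(\mathrm{Mat}_2)$ in which $M=u\,D\,w^{-1}$ has coordinates so that $t=\det M$, one reads $\partial_t=\partial_a$, which is the extra vector field $E_{41}\in\mathfrak{sl}_4\setminus(\mathfrak{sl}_2\oplus\mathfrak{sl}_2)$; a one--line matrix computation of $M\mapsto XM-MY$ shows that no element of $\mathfrak g\oplus\mathfrak g$ realizes it, so the degree--one monomial $\partial_t$ already works. In type $A_2$ no monomial in $\partial_{t_i}$ can have order one: the $T_1$--weight of $\partial_{t_i}$ is $-\gamma_i=-2\alpha_i$, which is not a restricted root and hence not the weight of any global vector field. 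I would therefore produce the symmetric degree--two operator whose restriction to $X_0$ is $\partial_{t_1}\partial_{t_2}$, of weight $-\gamma_1-\gamma_2=-2(\alpha_1+\alpha_2)$, check that it extends across the two boundary divisors (equivalently that its symbol $\xi_1\xi_2$ extends to a section of $\operatorname{Sym}^2T_X$), and observe it is not in the image: there are no global vector fields of weights $-\gamma_1,-\gamma_2$, so its symbol is not a product of symbols coming from $\mathfrak g\oplus\mathfrak g$, whence $\partial_{t_1}\partial_{t_2}\notin L(\mathfrak U(\mathfrak g\oplus\mathfrak g))$.

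For the converse (a)$\Rightarrow$(b) I would argue contrapositively. A global operator restricting to a monomial $\prod_i\partial_{t_i}^{k_i}$ has $T_1$--weight $-\sum_i k_i\gamma_i$, and the requirement that this monomial extend regularly through every boundary divisor and in every chart $\prec_\alpha$ translates, via the explicit vector--field formulas, into a system of cancellations indexed by the root strings through the $\gamma_i$. The Chevalley relations make these cancellations close up inside the algebra of \emph{polynomial}--coefficient (hence global) operators only when every positive root has height at most two; as soon as a root of height $\ge 3$ is present, extending the monomial forces either a pole along some $X_i$ or an irremovable dependence on the $x_\alpha$, so the only surviving global operators of that weight lie in $L(\mathfrak U(\mathfrak g\oplus\mathfrak g))$. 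Carrying out this regularity analysis — showing precisely where the height--three obstruction enters and that it is genuinely unavoidable — is the main obstacle of the proof; the symbol/normal--bundle reformulation over the closed orbit $G/B\times G/B^-$ is the tool I would use to make the obstruction rigorous.

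Finally I would close the equivalence with the elementary root--system fact that the highest root of an irreducible system has height $\le 2$ exactly in types $A_1$ (height $1$) and $A_2$ (height $2$): every other irreducible type, including $A_n$ for $n\ge 3$, $B_2=C_2$, and $G_2$, has highest root of height $\ge 3$. Thus \emph{the height of the highest root is $\le 2$} is equivalent to \emph{$\Phi$ being of type $A_1$ or $A_2$}, which together with the two implications above yields the stated equivalence.
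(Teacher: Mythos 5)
Your overall architecture (work in the big cell, reduce extendability of $\prod_i\partial_{t_i}^{n_i}$ to a combinatorial condition on $\Phi$, then identify $A_1$ and $A_2$) matches the paper's, but the two steps that actually carry the proof are left as assertions, and one of them points at the wrong invariant. For (a)$\Rightarrow$(b), the paper computes explicitly how the coordinates transform on the translate $n_{\alpha_i}.X_0$ of the big cell by a representative of a simple reflection, obtaining $\partial_{t_i}=(-x')^{\langle\gamma_i,\alpha_i\rangle}\partial_{t'_i}-(-x')^{\langle\gamma_i,\alpha_i\rangle-1}\partial_{y'}$ and $\partial_{t_j}=(-x')^{\langle\gamma_j,\alpha_i\rangle}\partial_{t'_j}$ for $j\neq i$; since globality reduces (via $G$-stability of the union of the $s_\alpha B s_\alpha$-translates and the fact that $X_0$ meets every $G$-orbit) to regularity on each such translate, the monomial extends globally if and only if $(C-I)(n_1,\ldots,n_r)\in(\mathbb{Z}_{\geq 0})^r$ with $C$ the Cartan matrix, and this has a nonzero nonnegative solution exactly in types $A_1$ and $A_2$ (with all $n_i$ equal). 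Your proposed obstruction --- ``a root of height $\geq 3$'', cancellations along root strings --- is not the mechanism that appears: the failure in type $B_2$ comes from $\langle\alpha_1,\alpha_2\rangle\langle\alpha_2,\alpha_1\rangle=2$ and in type $A_3$ from the middle node having two neighbours, i.e.\ from the off-diagonal Cartan integers, and you give no argument that your height criterion is the genuine obstruction rather than a set-theoretic coincidence with the answer. You yourself flag this step as ``the main obstacle''; it is precisely the content of the theorem and cannot be deferred.

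Second, your non-membership argument in type $A_2$ is incorrect as stated. You claim the symbol $\xi_1\xi_2$, of $T_1$-weight $-\gamma_1-\gamma_2=-2(\tilde\alpha_1+\tilde\alpha_2)$, cannot come from $\mathfrak{U}(\mathfrak{g}\oplus\mathfrak{g})$ because $-\gamma_1$ and $-\gamma_2$ are not weights of global vector fields. But the order-two part of an element of $\mathfrak{U}(\mathfrak{g}\oplus\mathfrak{g})$ has symbol a \emph{sum} of products of vector fields whose weights need only sum to $-2(\tilde\alpha_1+\tilde\alpha_2)$, and since $\alpha_1+\alpha_2$ is a root this weight is realized by pairs such as $(X_{-\alpha_1-\alpha_2},0)\cdot(0,X_{\alpha_1+\alpha_2})$; the weight count alone excludes nothing. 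The paper's argument is both simpler and correct: every operator in $L(\mathfrak{U}(\mathfrak{g}))$ preserves $\Gamma(X_0,{\cal I}_{D_1}{\cal I}_{D_2})$ because the boundary divisors are $G$-stable, while $\partial_{t_1}\partial_{t_2}(t_1t_2)=1\notin\Gamma(X_0,{\cal I}_{D_1}{\cal I}_{D_2})$. Finally, your reduction of globality of the operator to globality of its principal symbol in $H^0(X,\Sym^nT_X)$ is only a necessary condition in general and needs justification; in the paper it is the explicit coordinate change above that shows no lower-order terms obstruct.
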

\begin{proof}
Let $U$ be the unipotent radical of $B$. We have an isomorphism
\[
\phi : U\times U'\times \mathbb{A}^r\to X_0
\]
Let $\prec_i$ a total relation on $\Phi^+$ such that $\alpha_i$ is maximal for this relation, and denote by $\delta_1\prec_i\delta_2\prec_i\ldots\prec_i\delta_q$ the other elements of $\Phi^+$. We can represent a point in $X_0$ by a couple
\[
\phi(u_{\delta_1}(x_{\delta_1})\ldots u_{\delta_q}(x_{\delta_q})u_{\alpha_i}(x),u_{\delta'_1}(y_{\delta_1})\ldots u_{\delta'_q}(y_{\delta_q})u_{\alpha'_i}(y),(t_1,\ldots,t_r))
\]
and a point in $n_{\alpha_i}.X_0$ by
\[
n_{\alpha_i}\phi(u_{\delta_1}(x'_{\delta_1})\ldots u_{\delta_q}(x'_{\delta_q})u_{\alpha_i}(x'),u_{\delta'_1}(y'_{\delta_1})\ldots u_{\delta'_q}(y'_{\delta_q})u_{\alpha'_i}(y'),(t,_1,\ldots,t,_r))
\]
On the open set $(\prod t'_i\neq 0,x'\neq 0)$, we get
\begin{changemargin}{-1cm}{0cm}
\[
\begin{aligned}
&n_{\alpha_i}.\phi(u_{\delta_1}(x'_{\delta_1})\ldots u_{\delta_q}(x'_{\delta_q})u_{\alpha_i}(x')u_{\delta'_1}(y'_{\delta_1})\ldots u_{\delta'_q}(y'_{\delta_q})u_{\alpha'_i}(y'),(t'_1,\ldots,t'_r))\\
=&u_{s_{\alpha_i}(\delta_1)}(k_1x'_{\delta_1})\ldots u_{s_{\alpha_i}(\delta_q)}(k_qx'_{\delta_q})n_{\alpha_i}u_{\alpha_i}(x')u_{\delta'_1}(y'_{\delta_1})\ldots u_{\delta'_q}(y'_{\delta_q})u_{\alpha'_i}(y')a(t'_1,\ldots,t'_r).[h]\\
=&u_{s_{\alpha_i}(\delta_1)}(k_1x'_{\delta_1})\ldots u_{s_{\alpha_i}(\delta_q)}(k_qx'_{\delta_q})\alpha_i^{\vee}(-x'^{-1})u_{\alpha_i}(-x')u_{-\alpha_i}(x'^{-1})u_{\delta'_1}(y'_{\delta_1})\ldots u_{\delta'_q}(y'_{\delta_q})u_{\alpha'_i}(y')a(t'_1,\ldots,t'_r).[h]\\
=&u_{s_{\alpha_i}(\delta_1)}(k_1x'_{\delta_1})\ldots u_{s_{\alpha_i}(\delta_q)}(k_qx'_{\delta_q})\alpha_i^{\vee}(-x'^{-1})u_{\alpha_i}(-x')u_{\delta'_1}(y'_{\delta_1})\ldots u_{\delta'_q}(y'_{\delta_q})u_{\alpha'_i}(y')u_{-\alpha_i}(x'^{-1})a(t'_1,\ldots,t'_r).[h]\\
=&u_{s_{\alpha_i}(\delta_1)}(k_1x'_{\delta_1})\ldots u_{s_{\alpha_i}(\delta_q)}(k_qx'_{\delta_q})\alpha_i^{\vee}(-x'^{-1})u_{\alpha_i}(-x')u_{\delta'_1}(y'_{\delta_1})\ldots u_{\delta'_q}(y'_{\delta_q})u_{\alpha'_i}(y')u_{\alpha'_i}(-t'_1x'^{-1})a(t'_1,\ldots,t'_r).[h]\\
=&u_{\delta_1}(P_1(x'_{\delta_i}))\ldots u_{\delta_q}(P_q(x'_{\delta_i}))u_{\alpha_i}(-x'^{-1})u_{\delta'_1}(y'_{\delta_1})\ldots u_{\delta'_q}(y'_{\delta_q})u_{\alpha'_i}(y'-t'_1x'^{-1})\alpha_i^{\vee}(-x'^{-1})a(t'_1,\ldots,t'_r).[h]\\
=&u_{\delta_1}(P_1(x'_{\delta_i}))\ldots u_{\delta_q}(P_q(x'_{\delta_i}))u_{\alpha_i}(-x'^{-1})u_{\delta'_1}(y'_{\delta_1})\ldots u_{\delta'_q}(y'_{\delta_q})u_{\alpha'_i}(y'-t'_1x'^{-1})a(t_1(-x')^{-\langle\gamma_1,\alpha_i\rangle},\ldots,t'_r(-x')^{\langle\gamma_r,\alpha_i\rangle}).[h]\\
\end{aligned}
\]
\end{changemargin}
with $k_j$ such that $ad(n_{\alpha_i}).X_{\delta_j}=k_jX_{s_{\alpha_i}(\delta_j)}$, and $P_j$ are polynomials in the $x'_{\delta_k}$, of degree at most one in each of its variables, which are given by the Campbell-Hausdorff formula. These changes of coordinates can be extended when the $t'_j$'s are 0, and we get on $n_{\alpha_i}.X_0\cap X_0=(x'\neq 0)$
\[
\begin{array}{rcl}
\partial_{t_i}&=&(-x')^{\langle\gamma_i,\alpha_i\rangle}\partial_{t'_i}-(-x')^{\langle\gamma_i,\alpha_i\rangle-1}\partial_{y'}\\
\partial_{t_j}&=&(-x')^{\langle\gamma_j,\alpha_i\rangle}\partial_{t'_j} \textrm{ pour }j\neq i
\end{array}
\]
\indent
Let $\partial=\prod\partial_{t_i}^{n_i}$ be a differential operator on $X_0$. Let us assume that $\partial$ is the restriction of a differential operator on each $s_{\alpha}.X_0\cup X_0$ for $\alpha$ simple root. Then $\partial$ is the restriction of a differential operator defined on an open subset which is $s_{\alpha}Bs_{\alpha}$-stable for each simple $\alpha$, hence it can be extended to a $G$-stable open subset $\Omega$. Since $X_0$ intersects all $G$-orbits of $X$, $\Omega=X$, and $\partial$ is the restriction to $X_0$ of a global differential operator.\\
\indent
Since for all $i$ and $j$, we have
\[
\langle\gamma_j,\alpha_i\rangle=\langle\gamma_j,\alpha'_i\rangle=\langle\alpha_j,\alpha_i\rangle
\]
and $\partial$ is the restriction of a global differential operator if and only if for all $1\leq i\leq r$, we have
\[
n_i+\sum\limits_{j\neq i}n_j\langle \alpha_j,\alpha_i\rangle\geq 0
\]
If $C$ denotes the Cartan matrix of $\Phi$, this can be restated as
\[
(C-I)(n_1,\ldots,n_r)\in(\mathbb{Z}_{\geq 0})^r
\]
This is possible if and only if $r\leq 2$, with $\Phi$ of type $A$, and the $n_i$'s being equal. The operators $\partial_{t_1}$ in type $A_1$ and $\partial_{t_1}\partial_{t_2}$ in type $A_2$ are such operators.\\
\indent
$\partial_{t_1}\partial_{t_2}$ does not lie in the image of $\mathfrak{U}(\mathfrak{g})$ because it does not stabilize $\Gamma(X_0,{\cal I}_{D_1}{\cal I}_{D_2})$, since $t_1t_2\in\Gamma(X_0,{\cal I}_{D_1}{\cal I}_{D_2})$, and $\partial_{t_1}\partial_{t_2}.t_1t_2=1\not\in\Gamma(X_0,{\cal I}_{D_1}{\cal I}_{D_2})$. The same holds for $\partial_{t_1}$ not stabilizing $\Gamma(X_0,{\cal I}_{D_1})$.
\end{proof}
A similar statement holds for wonderful compactifications of symmetric spaces $G/H$ with simple $G$ :
\begin{theoreme}
Let $X=\overline{G/H}$ be a symmetric space with a simple connected $G$ of adjoint type. Let us assume moreover that any simple root in $\Phi$ is not fixed by $-\theta$. The following are equivalent :
\item[(a)] There exists a differential operator on the affine $B$-cell $X_0$, which is a monomial in the $\partial_{t_i}$, that is the restriction to $X_0$ of a global differential operator that does not lie in the image of $\mathfrak{U}(\mathfrak{g})$ ;
\item[(b)] $\tilde{\Phi}$ is of type $A_1$ or $A_2$.
\end{theoreme}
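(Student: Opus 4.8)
The plan is to mimic the proof of the previous theorem, replacing the two root systems coming from $G\times G$ by the root system $\Phi$ of the simple group $G$ together with its little root system $\tilde{\Phi}$. The statement (b) now refers to $\tilde{\Phi}$ rather than $\Phi$, and this is the only essential change: the criterion for a monomial $\prod\partial_{t_i}^{n_i}$ to extend to a global operator will again be a linear condition on a Cartan matrix, but with the restricted Cartan matrix $\tilde{C}$ of $\tilde{\Phi}$ in place of $C$. The genuinely new input is the local computation of how the coordinate fields $\partial_{t_i}$ transform when one crosses from the cell $X_0\cong U'\times\mathbb{A}^r$ to a neighbouring cell, so I would begin there.

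First I would fix, for each simple white root $\alpha_i$, a total order $\prec_i$ on $\Phi_1^+$ making $\alpha_i$ maximal, use it to coordinatize $X_0$ as in 2.1, and compute the change of coordinates between $X_0$ and $n_{\alpha_i}.X_0$ on the overlap $(x'\neq 0)$. This is the computation of the previous proof, now with two simplifications special to the present setting: since $G$ is simple there is a single copy of each root subgroup, and since no simple root is fixed by $-\theta$ I may invoke the proposition of 2.3, namely $e^{X_{\alpha_i}}.h=e^{-X_{\theta(\alpha_i)}}.h$ together with $[X_{\alpha_i},X_{\theta(\alpha_i)}]=0$, to push the factor $u_{-\alpha_i}(x'^{-1})$ coming from the decomposition of $n_{\alpha_i}u_{\alpha_i}(x')$ across $a(t).[h]$; Proposition 2.2 tells me precisely which root $-\theta(\alpha_i)$ is, so I can read off the resulting correction term exactly as the term $u_{\alpha'_i}(-t'_1x'^{-1})$ appeared before. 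I expect a transformation law of the shape $\partial_{t_j}=(-x')^{\langle\gamma_j,\alpha_i\rangle}\partial_{t'_j}$ for $j\neq i$, with a corrected expression for $\partial_{t_i}$, where $\langle\gamma_j,\alpha_i\rangle$ is now an entry of $\tilde{C}$ (the doubled simple restricted roots $\gamma_j$ pairing with the simple coroots through restriction to $T_1$). Crossing a black reflection $n_\beta$ leaves the $t$-coordinates untouched, since $s_\beta$ acts trivially on $T_1$, so it imposes no condition and only the white simple reflections matter. Since $X_0$ meets every $G$-orbit, extendability across all simple reflections forces extendability to a $G$-stable open set, hence to all of $X$, and it translates into $(\tilde{C}-I)(n_1,\ldots,n_r)\in(\mathbb{Z}_{\geq 0})^r$.

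It then remains to solve this condition and to check that the resulting operators are not in the image of $\mathfrak{U}(\mathfrak{g})$. For an irreducible $\tilde{\Phi}$ of rank $r$ the inequalities $n_i\geq\sum_{j\neq i}|\tilde{C}_{ij}|\,n_j$ admit a solution in positive integers exactly when $r\leq 2$ and $\tilde{\Phi}$ is of type $A$: rank one is automatic, rank two forces the product of the two off-diagonal entries of $\tilde{C}$ to be $\leq 1$ (ruling out $B_2$, $G_2$ and the non-reduced $BC_2$) and then $n_1=n_2$, while for $r\geq 3$ summing the inequalities along the Dynkin diagram yields a contradiction, reproducing the analysis of the previous theorem. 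The corresponding operators $\partial_{t_1}$ (type $A_1$) and $\partial_{t_1}\partial_{t_2}$ (type $A_2$) fail to preserve $\Gamma(X_0,{\cal I}_{D_1})$, respectively $\Gamma(X_0,{\cal I}_{D_1}{\cal I}_{D_2})$ — indeed $\partial_{t_1}\partial_{t_2}.t_1t_2=1$ — so they cannot lie in the image of $\mathfrak{U}(\mathfrak{g})$, which stabilizes these ideals.

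I expect the main obstacle to be the local group computation of the second paragraph. With $G$ simple the reflection $s_{\alpha_i}$ permutes the whole of $\Phi_1^+$ in a way dictated by the Satake diagram, and one must track carefully, through Proposition 2.2 and the proposition of 2.3, how each $u_\beta$ and the torus part $a(t)$ are reorganized and pushed past $[h]$, in order to be certain that the exponents one extracts are genuinely the restricted Cartan entries $\langle\gamma_j,\alpha_i\rangle$. The most delicate point here is the normalization of these pairings when $\tilde{\Phi}$ is non-reduced, which is precisely what must be controlled to exclude the rank-one case $BC_1$ and keep only type $A_1$ in (b).
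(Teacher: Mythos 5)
Your proposal follows the paper's proof essentially step for step: the same change-of-coordinates computation on $n_{\alpha}.X_0\cap X_0$ using Proposition 2.2 and the relation $e^{X_{\alpha}}.h=e^{-X_{\theta(\alpha)}}.h$, the same extension criterion $(C-I)(n_1,\ldots,n_r)\in(\mathbb{Z}_{\geq 0})^r$ with $C=(\langle\gamma_j,\alpha_i\rangle)$, the same resolution of that system in ranks $1$, $2$ and $\geq 3$, and the same non-surjectivity argument via the ideals ${\cal I}_{D_i}$. The only structural detail you do not anticipate is that the paper splits the local computation into the cases $\langle\theta(\alpha),\alpha^{\vee}\rangle=0$ and $\langle\theta(\alpha),\alpha^{\vee}\rangle\neq 0$ (the correction term lands on $u_{-\theta(\alpha)}$ in the first case and on $u_{s_{\alpha}(-\theta(\alpha))}$ in the second), but the transformation law for the $\partial_{t_j}$ comes out as you predict in both cases.

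The one point you have backwards is the role of $BC_1$. The matrix $(\langle\gamma_j,\alpha_i\rangle)$ is \emph{not} the Cartan matrix of $\tilde{\Phi}$ when $\tilde{\Phi}$ is of type $BC_n$ — the paper computes that its last diagonal entry is $1$ rather than $2$ — and in rank one the condition $((C-I)n_1)\geq 0$ holds for every $n_1\geq 0$ whatever that diagonal entry is, so no normalization of the pairings can ``exclude $BC_1$''. Nor should it: in the paper's convention, condition (b) refers to the reduced root system attached to $\tilde{\Phi}$, so ``type $A_1$'' covers all rank-one cases including the non-reduced ones (the Remark's list explicitly contains $\textrm{PGL}_n/\textrm{GL}_{n-1}$ and $F_4/\textrm{PSO}_9$, whose restricted root systems are $BC_1$). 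The only place the non-reduced normalization could matter is $BC_2$, and there both the genuine Cartan matrix and $(\langle\gamma_j,\alpha_i\rangle)$ force $n_1=n_2=0$, exactly as in your rank-two analysis, so your final answer is unaffected.
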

\begin{proof}
let $\alpha$ be a white root such that $\alpha-\theta(\alpha)=\gamma_i$. Let us assume first that $\langle\theta(\alpha),\alpha^{\vee}\rangle=0$. Then we get that $-\theta(\alpha)+\alpha\not\in\Phi$ since $s_{\alpha}(\theta(\alpha))=\theta(\alpha)$ and $s_{\alpha}(\theta(\alpha)-\alpha)=\theta(\alpha)+\alpha\not\in\Phi$. Let us fix $\prec$ a partial order on $\Phi_1^+$ such that
\[
\beta_1\prec\ldots\prec\beta_q\prec\delta_1\prec\ldots\prec\delta_p\prec-\theta(\alpha)\prec\alpha
\]
where the $\beta_i$'s (resp. $\delta_i$'s) are roots in $\Phi_1^+\setminus\{-\theta(\alpha),\alpha\}$ such that $s_{\alpha}(\beta)\not\in\Phi_0$ (resp. $s_{\alpha}(\delta)\in\Phi_0$). Let $U$ be the open $(\prod t'_i\neq 0,x'\neq 0)$ of $n_{\alpha}.X_0$. We have on $U$
\[
\begin{aligned}
&n_{\alpha}.\varphi(u_{\beta_1}(x'_{\beta_1})\ldots u_{\beta_q}(x'_{\beta_q})u_{\delta_1}(x'_{\delta_1})\ldots u_{\delta_p}(x'_{\delta_p})u_{-\theta(\alpha)}(y')u_{\alpha}(x'),(t'_1,\ldots,t'_r))\\
=&u_{\beta_1}(Q_1(x'_{\beta_j},x'_{\delta_j},x'^{-1},y'-x'^{-1}t_i))\ldots u_{\beta_q}(Q_q(x'_{\beta_j},x'_{\delta_j},x'^{-1},y'-x'^{-1}t_i)) \\ &u_{\delta_1}(P_1(x'_{\delta_j},x'^{-1},y'-x'^{-1}t_i))\ldots u_{\delta_p}(P_p(x'_{\delta_j},x'^{-1},y'-x'^{-1}t_i)) \\ &u_{-\theta(\alpha)}(y'-x'^{-1}t_i)u_{\alpha}(-x'^{-1})a(t'_1(-x')^{\langle\gamma_1,\alpha_i\rangle},\ldots,t'_r(-x')^{\langle\gamma_r,\alpha_i\rangle}).[h]\\
\end{aligned}
\]
with $P_j$'s and $Q_j$'s obtained the same way than before. These change of coordinates can be extended to $n_{\alpha}.X_0\cap X_0=(x'\neq 0)$ and we get
\[
\begin{array}{rcl}
\partial_{t_i}&=&(-x')^{\langle\gamma_i,\alpha\rangle}\partial_{t'_i}-(-x')^{\langle\gamma_i,\alpha\rangle-1}\partial_{y'}\\
\partial_{t_j}&=&(-x')^{\langle\gamma_j,\alpha\rangle}\partial_{t'_j} \textrm{ pour }j\neq i
\end{array}
\]
Now let us assume that $\langle\theta(\alpha),\alpha^{\vee}\rangle\neq 0$. Let $\prec'$ be a total order on $\Phi_1^+$ such that
\[
\beta_1\prec'\ldots\prec'\beta_q\prec'\delta_1\prec'\ldots\prec'\delta_p\prec's_{\alpha}(-\theta(\alpha))\prec'-\theta(\alpha)\prec'\alpha
\]
where the $\beta_i$'s (resp. $\delta_i$'s) are roots in $\Phi_1^+\setminus\{s_{\alpha}(-\theta(\alpha)),-\theta(\alpha),\alpha\}$ such that $s_{\alpha}(\beta)\not\in\Phi_0$ (resp. $s_{\alpha}(\delta)\in\Phi_0$). We get
\begin{changemargin}{-1cm}{0cm}
\[
\begin{aligned}
&n_{\alpha}.\varphi(u_{\beta_1}(x'_{\beta_1})\ldots u_{\beta_q}(x'_{\beta_q})u_{\delta_1}(x'_{\delta_1})\ldots u_{\delta_p}(x'_{\delta_p})u_{s_{\alpha}(-\theta(\alpha))}(z')u_{-\theta(\alpha)}(y')u_{\alpha}(x'),(t'_1,\ldots,t'_r))\\
=&u_{\beta_1}(Q_1(x'_{\beta_j},x'_{\delta_j},x'^{-1},y',z'-x'^{-1}t_i))\ldots u_{\beta_q}(Q_q(x'_{\beta_j},x'_{\delta_j},x'^{-1},y',z'-x'^{-1}t_i)) \\ &u_{\delta_1}(P_1(x'_{\delta_j},x'^{-1},y',z'-x'^{-1}t_i))\ldots u_{\delta_p}(P_p(x'_{\delta_j},x'^{-1},y',z'-x'^{-1}t_i))u_{s_{\alpha}(-\theta(\alpha))}(Q_{q+1}(x'_{\beta_j},x'_{\delta_j},x'^{-1},y',z'-x'^{-1}t_i)) \\ &u_{-\theta(\alpha)}(z'-x'^{-1}t_i)u_{\alpha}(-x'^{-1})a(t'_1(-x')^{\langle\gamma_1,\alpha_i\rangle},\ldots,t'_r(-x')^{\langle\gamma_r,\alpha_i\rangle}).[h]
\end{aligned}
\]
\end{changemargin}
with similar $P_j$'s and $Q_j$'s. On $n_{\alpha}.X_0\cap X_0$, we have
\[
\begin{array}{rcl}
\partial_{t_i}&=&(-x')^{\langle\gamma_i,\alpha\rangle}\partial_{t'_i}-(-x')^{\langle\gamma_i,\alpha\rangle-1}\partial_{z'}\\
\partial_{t_j}&=&(-x')^{\langle\gamma_j,\alpha\rangle}\partial_{t'_j} \textrm{ pour }j\neq i
\end{array}
\]
In both cases, the differential operator $\partial=\prod\partial_{t_i}^{n_i}$ is the restriction of a global differential operator on $X$ if and only if for all family of roots $(\alpha_1,\ldots,\alpha_r)$ such that $\alpha_i-\theta(\alpha_i)=\gamma_i$, we have
\[
(C-I)(n_1,\ldots,n_r)\in(\mathbb{Z}_{\geq 0})^r
\]
where $C$ is the matrix $C=(\langle \gamma_j,\alpha_i\rangle)$. But $C$ is exactly the Cartan matrix of $\tilde{\Phi}$ when $\tilde{\Phi}$ is not of type $BC_n$, and it is the matrix
\[
(\langle\alpha_i,\gamma_j\rangle)_{1\leq i,j\leq r}=
\begin{pmatrix}
2&-1&\ldots&0&0\\
-1&2&\ldots&0&0\\
\ldots&\ldots&\ldots&\ldots&\ldots\\
0&0&\ldots&2&-1\\
0&0&\ldots&-1&1
\end{pmatrix}
\]
when $\tilde{\Phi}$ is of type $BC_n$. Hence $\partial$ is the restriction of a global differential operator if and only if $\tilde{\Phi}$ is of type $A_1$ (and $\partial=\partial_{t_1}^{n_1}$), or $\tilde{\Phi}$ is of type $A_2$ (and $\partial=(\partial_{t_1}\partial_{t_2})^n$). For the same reasons as before, these operators do not lie in the image of $\mathfrak{U}(\mathfrak{g})$.
\end{proof}
\begin{remark}
By looking at the classification of symmetric spaces (cf. \cite{He} or \cite{Ti}), the symmetric spaces $G/H$ with simple connected $G$ of adjoint type of type $A_1$ or $A_2$ such that $-\theta$ has no fixed point in $\Phi$ are exactly the following :
\item[(1)] $\textrm{PGL}_6/\textrm{PSp}_6$ ;
\item[(2)] $E_6/F_4$ ;
\item[(3)] $\textrm{PGL}_n/\textrm{GL}_{n-1}$ with $n\geq 3$ ;
\item[(3')] $\textrm{PSO}_6/\textrm{GL}_3\simeq \textrm{PGL}_4/\textrm{GL}_3$ ;
\item[(4)] $\textrm{Psp}_{2n}/\textrm{P}(\textrm{SL}_2\times\textrm{Sp}_{2n-2})$ with $n\geq 2$ ;
\item[(5)] $\textrm{PSO}_n/\textrm{P}(\textrm{SO}_{n-1}\times k^*)$ with $n\geq 5$ ;
\item[(5')] $\textrm{PGL}_4/\textrm{PSp}_4\simeq \textrm{PSO}_6/\textrm{P}(\textrm{SO}_5\times k^*)$ ;
\item[(6)] $F_4/\textrm{PSO}_9$.\\
The two remaining cases of type $A_1$ or $A_2$ are $\textrm{PGL}_2/k^*$ and $\textrm{PGL}_3/\textrm{PSO}_3$, which have to be investigated. In these cases, $-\theta$ is the identity on $\Phi$.
\end{remark}
If $\alpha$ is fixed by $-\theta$, the computations are more complicated, since $e^{X_{\alpha}}$ and $e^{X_{\theta(\alpha)}}$ are not commuting any more, and to get rid of the $u_{-\alpha}(x).[h]$, we actually have to find some $u_{\alpha}(y)\alpha^{\vee}(z)u_{-\alpha}(-x)$ in $H$.\\
\textbullet \indent Case of $\textrm{PGL}_2/k^*$. Let $y=x'^{-1}\alpha(t'_1)$ and let $\xi$ such that $\xi^2=1+y^2$. On the open $(x'\neq 0, t'_1\neq 0, 1+y^2\neq 0)$ we have
\[
\begin{aligned}
n_{\alpha}u_{\alpha}(x')a(t'_1).[h]&=\alpha^{\vee}(-x'^{-1})u_{\alpha}(-x')a(t'_1)u_{-\alpha}(y).[h]\\
&=\alpha^{\vee}(-x'^{-1})u_{\alpha}(-x')a(t'_1)\alpha^{\vee}(\xi^{-1})u_{\alpha}(y).[h]\\
&=\alpha^{\vee}(-x'^{-1})u_{\alpha}(-x'+\frac{x'^{-1}t'_1}{1+x'^{-2}t'_1})a(\frac{t'_1}{(1+x'^{-2}t'_1)^2}).[h]\\
&=u_{\alpha}(\frac{-x'}{x'^2+t'_1})a(\frac{t'_1}{(x'^2+t'_1)^2}).[h]
\end{aligned}
\]
these change of coordinates can be extended to $n_{\alpha}.X_0\cap X_0=(x'\neq 0)$, and we get
\[
\partial_{t_1}=(x'^2-t'_1)(x'^2+t'_1)\partial_{t'_1}-x'(x'^2+t'_1)\partial_{x'}
\]
Hence $\partial_{t_1}$ is the restriction of a global differential operator, which does not lie in the image of $\mathfrak{U}(\mathfrak{g})$.\\
\textbullet \indent Case of $\textrm{PGL}_3/\textrm{PSO}_3$. Let $\psi=x'^{-1}\alpha_{1}(a(t'_{1},t'_{2}))$, and let $\xi$ such that $\xi^{2}=(1+\psi^{2})$. Then we have
\[
\begin{pmatrix}
\xi-\xi^{-1}\psi^{2}&-\xi^{-1}\psi&0\\
\xi^{-1}\psi&\xi^{-1}&0\\
0&0&1
\end{pmatrix}
\in\text{SO}_{3}
\]
and in the open $x'\neq 0, t'_1t'_2\neq 0, 1+\psi^2\neq 0)$ we have
\[
\begin{aligned}
&(n_{\alpha_{1}}.(u_{\alpha_{1}+\alpha_{2}}(z')u_{\alpha_{2}}(y')u_{\alpha_{1}}(x')a(t'_{1},t'_{2}))).[h]&\\
=&u_{\alpha_{1}+\alpha_{2}}(y')u_{\alpha_{2}}(-z')\alpha_{1}^{\vee}(-x'^{-1})u_{\alpha_{1}}(-x')a(t'_{1},t'_{2})e^{\psi X_{-\alpha_{1}}}.[h]&\\
=&u_{\alpha_{1}+\alpha_{2}}(y')u_{\alpha_{2}}(-z')u_{\alpha_{1}}(\frac{-x'}{x'^{2}+t'_{1}})a(\frac{t'_{1}}{(x'^{2}+t'_{1})^{2}},t'_{2}(x'^{2}+t'_{1})).[h]&
\end{aligned}
\]
This can be extended to $n_{\alpha_1}.X_0\cap X_0$, and we get
\[
\begin{array}{rcl}
\partial_{t_1}&=&(x'^2-t'_1)(x'^2+t'_1)\partial_{t'_1}+t'_2(x'^2+t'_1)\partial_{t'_2}-x'(x'^2+t'_1)\partial_{x'}\\
\partial_{t_{2}}&=&\frac{-1}{x'^{2}+t'_{1}}\partial_{t'_{2}}\\
\partial_{t_{1}}\partial_{t_{2}}&=&(x'^{2}-t'_{1})\partial_{t'_{1}}\partial_{t'_{2}}+t'_{2}\partial_{t'_{2}}^{2}-x'\partial_{x'}\partial_{t'_{2}}
\end{array}
\]
The same can be done on $n_{\alpha_2}.X_0\cap X_0$ and we get
\[
\partial_{t_{1}}\partial_{t_{2}}=(y''^{2}-t''_{2})\partial_{t''_{1}}\partial_{t''_{2}}+t''_{1}\partial_{t''_{1}}^{2}-y''\partial_{t''_{1}}\partial_{y''}-x''y''\partial_{t''_{1}}\partial_{z''}
\]
Hence $\partial_{t_1}\partial_{t_2}$ is the restriction of a global differential operator, which does not lie in the image of $\mathfrak{U}(\mathfrak{g})$.\\
\indent
Let us remark that in both case, $\xi$ does not depend of the representative of $a(t_1)$ (or $a(t_1,t_2)$). We can restate these results as :
\begin{theoreme}
Let $X=\overline{G/H}$ be a symmetric space with either $G$ simple connected of adjoint type, or $G=H\times H$ with $H$ simple connected of adjoint type. Let us assume that $\tilde{\Phi}$ is of type $A_1$ or $A_2$. Then the morphism $L:\mathfrak{U}(\mathfrak{g})\to D_X$ is not surjective.
\end{theoreme}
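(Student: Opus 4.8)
The plan is to deduce the statement by assembling the explicit operators constructed in the three situations treated above: to prove that $L$ is not surjective it is enough to exhibit, in each case allowed by the hypotheses, a single global differential operator on $X$ lying outside the image of $L$. Since we assume $G$ simple or $G=H\times H$, the variety $X$ is already one of the irreducible building blocks arising from the decomposition of the Satake diagram, and because $A_1$ and $A_2$ are irreducible, $\tilde\Phi$ is irreducible of rank $r\in\{1,2\}$; in particular no further reduction is required and it suffices to run through the possibilities for a single block.

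First I would dispose of the group case $G=H\times H$ with $H$ simple and $\theta$ the swap. Here $X$ is the wonderful compactification of the adjoint group $H$, its $B\times B^-$-cell is the one used above, and the little root system $\tilde\Phi$ is canonically identified with $\Phi(H)$; thus $\tilde\Phi$ is of type $A_1$ or $A_2$ precisely when $\Phi(H)$ is. The equivalence proved for the group compactification then applies to $H$ and produces a monomial in the $\partial_{t_i}$ — namely $\partial_{t_1}$ in type $A_1$ and $\partial_{t_1}\partial_{t_2}$ in type $A_2$ — which is the restriction to the cell of a global differential operator not coming from $\mathfrak{U}(\mathfrak{g})$.

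For $G$ simple I would split according to whether $-\theta$ fixes a simple root. If $-\theta$ fixes no (necessarily white) simple root, then the equivalence established above under exactly that hypothesis applies and yields the desired operator as soon as $\tilde\Phi$ is of type $A_1$ or $A_2$. If instead $-\theta$ fixes some simple root, then by the classification of rank $\le 2$ symmetric spaces recalled in the Remark the only possibilities with $G$ simple are $\textrm{PGL}_2/k^*$ and $\textrm{PGL}_3/\textrm{PSO}_3$; for these two the explicit change-of-coordinate computations carried out just above exhibit $\partial_{t_1}$, respectively $\partial_{t_1}\partial_{t_2}$, as restrictions of global differential operators.

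In all cases the operator $\partial$ so obtained does not lie in the image of $L$: it fails to preserve $\Gamma(X_0,{\cal I}_{D_1})$ in type $A_1$, or $\Gamma(X_0,{\cal I}_{D_1}{\cal I}_{D_2})$ in type $A_2$, whereas every operator coming from the infinitesimal action of $\mathfrak{g}$ preserves these ideals since the boundary divisors $D_i$ are $G$-stable. Hence $L:\mathfrak{U}(\mathfrak{g})\to D_X$ is not surjective. The only genuinely non-clerical point, and the one I expect to be the main obstacle, is the completeness of the case division for $G$ simple: one must be certain that the list of the Remark together with the two exceptional spaces $\textrm{PGL}_2/k^*$ and $\textrm{PGL}_3/\textrm{PSO}_3$ leaves no symmetric space with $\tilde\Phi$ of type $A_1$ or $A_2$ unaccounted for, which is exactly what the classification guarantees.
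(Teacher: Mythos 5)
Your proposal is correct and follows essentially the same route as the paper, which states this theorem explicitly as a restatement of the preceding results: the group case is Theorem 2.6, the case of simple $G$ with no $-\theta$-fixed simple root is Theorem 2.7, and the two remaining cases $\textrm{PGL}_2/k^*$ and $\textrm{PGL}_3/\textrm{PSO}_3$ identified by the classification are handled by the explicit coordinate computations, with non-membership in the image of $L$ following in each case from the failure to preserve $\Gamma(X_0,{\cal I}_{D_1})$ or $\Gamma(X_0,{\cal I}_{D_1}{\cal I}_{D_2})$. Your case division and the final argument match the paper's.
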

\begin{remark}
Let ${\cal T}_X$ be the tangent sheaf on $X$. The canonical sheaf $\omega_X$ is isomorphic to ${\cal L}_{-\sum\limits_{\alpha\in\Phi_1^+}\alpha}$, and $\omega_X^{\otimes -1}$ is ample (cf. \cite{DCP} (8.4)), hence $X$ is Fano. By \cite{BiBr} (4.2), we get $H^1(X,{\cal T}_X)=0$, hence the exact sequence
\[
0\to\Gamma(X,{\cal O}_X)\to\Gamma(X,{\cal D}^1_X)\to\Gamma(X,{\cal T}_X)\to 0
\]
splits, and $\Gamma(X,{\cal D}^1_X)=\Gamma({\cal T}_X)\oplus k=\textrm{Lie}(Aut^0(X))\oplus k$. If $\tilde{\Phi}$ is of type $A_1$, $\partial_{t_1}$ lies in the image of 
\[
L':\mathfrak{U}(\textrm{Lie}(Aut^0(X)))\to D_X
\]
hence $\mathfrak{g}\subsetneq\textrm{Lie}(Aut^0(X))$. For example, we know in the case of $\textrm{PGL}_2\times\textrm{PGL}_2/\textrm{PGL}_2$ that $Aut^0(X)=\textrm{PSL}_4$, (cf. \cite{Br} (2.4.5)).\\
\indent When $\tilde{\Phi}$ is of type $A_1$, $X=\overline{G/H}$ has only two $G$-orbits (the open $G$-orbit $\Omega$ and its complement $D_1$, which is a smooth irreducible divisor). Let us recall that $\textrm{Pic}(X)$ is freely spanned by the set $\Delta_X$ of colors of $X$ (ie. irreducible $B$-stable but not $G$-stable divisors). Then we have integers $a_{D}$ such that in $\textrm{Pic}(X)$, we have
\[
[D_1]=\sum\limits_{D\in\Delta_X}a_{D}[D]
\]
When $\tilde{\Phi}$ is of type $A_1$, all these integers $a_D$ are non-negative. But by \cite{Br}(2.3.2) and (2.4.2) we know that if $X$ is a wonderful variety, it is also a $Aut^0(X)$-wonderful variety, and the complement to its open $Aut^0(X)$-orbit is a union $D_1\cup\ldots\cup D_p$, where the $D_j$ are exactly the $G$-stable smooth divisors such that there exists a negative $a_{i,D_j}$, and these $a_{i,D_j}$ have been computed in \cite{Wa} for wonderful varieties of rank 1 and 2. Hence $X$ is a flag variety for $Aut^0(X)$. (cf. \cite{BiBr})
\end{remark}

\section{Structure of $D_{Y,{\cal L}}$-module of $H^0(Y,{\cal L})$ via geometric invariant theory}
Since wonderful compactifications $Y$ of symmetric spaces with reduced root system $\tilde{\Phi}$ of type $A_1$ are actually flag variety for a bigger group, we already know what the algebra $D_X$ is. In this section, we will be interested in the case of the wonderful compactification of $\textrm{PGL}_3$, $\textrm{PGL}_3/\textrm{PSO}_3$ and of $\textrm{PGL}_6/\textrm{PSp}_6$, which are of type $A_2$. We will use a description of these compactifications as direct limits of GIT quotients of some Grassmannian $X$, as it is explained in \cite{Th2}, and this limit happens to be exactly one of these GIT quotients. We will use this description and what is known about differential operators on Grassmannians to show that in these cases, $D_Y$ is of finite type, and that the $H^0(Y,{\cal L})$ are simple as $D_{Y,{\cal L}}$-modules. For more details about GIT quotients and their variations, one can check \cite{MFK}, \cite{DH}, \cite{Th1}, \cite{Re}, and \cite{BP} for quotients by a torus.

\subsection{Thaddeus' theorem}
Let us recall that if $X$ is a projective algebraic variety acted on by a reductive group $G$, and if ${\cal L}$ is a $G$-linearized very ample line bundle, we call GIT quotient of $X$ the rational map 
\[
X\to X//G=\textrm{Proj}(\bigoplus\limits_{n\geq 0}H^0(X,{\cal L}^{\otimes n})^G)
\]
which is defined over the open set of semistable points $X^{ss}({\cal L})$. Let $\textrm{Pic}^G(X)$ denote the group of isomorphism classes of $G$-linearized invertible sheaves on $X$, let $\textrm{Pic}^G(X)_0$ be the subgroup of $\textrm{Pic}^G(X)$ of homologically trivial ${\cal L}$ with trivial $G$-linearization, and let $\textrm{NS}^G(X)=\textrm{Pic}^G(X)/\textrm{Pic}^G(X)_0$. By \cite{DH}, we can parametrize the different GIT quotients with a polytope lying in the $G$-ample cone $C^{G}(X)$ of the $G$-Neron-Severi group $\textrm{NS}^G(X)$ of $X$, that is giving to the family of non-isomorphic GIT quotients of $X$ by $G$ a structure of inverse system, giving rise to the inverse limit of all GIT-quotients of $X$ by $G$, dominating all GIT quotients, which will be denoted by $\underline{X//G}$. In \cite{Th2}, Thaddeus stated the following result :
\begin{proposition}
\item[(1)] The wonderful compactification of $\textup{PGL}_n$ is isomorphic to $\underline{X//\mathbb{C}^*}$, where $X=\textrm{Gr}_n(\mathbb{C}^n\oplus\mathbb{C}^n)$, with $\mathbb{C}^*$ acting with the weight 1 on the first $\mathbb{C}^n$ and with the weight -1 on the second one.
\item[(2)] The wonderful compactification of $\textup{PGL}_n/\textup{PSO}_n$ is isomorphic to $\underline{X//\mathbb{C}^*}$, where $X=\textrm{LaGr}_n(\mathbb{C}^n\oplus\mathbb{C}^n)$ is a Grassmannian of Lagrangian subspaces (for the standard symplectic form), with $\mathbb{C}^*$ acting with the weight 1 on the first $\mathbb{C}^n$ and with the weight -1 on the second one.
\item[(3)] If $n$ is even, the wonderful compactification of $\textup{PGL}_{n}/\textrm{PSp}_{n}$ is isomorphic to $\underline{X//\mathbb{C}^*}$, where $X=\textrm{OGr}^+_{n}(\mathbb{C}^{n}\oplus\mathbb{C}^{n})$ is the connected component containing $\mathbb{C}^{n}\oplus 0$ of the Grassmannian of orthogonal subspaces (for the quadratic form defined by $\begin{pmatrix}0&I\\I&0 \end{pmatrix}$ with $\mathbb{C}^*$ acting with the weight 1 on the first $\mathbb{C}^{n}$ and with the weight -1 on the second one.
\end{proposition}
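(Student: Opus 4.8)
The plan is to prove all three statements uniformly through variation of geometric invariant theory (in the sense of \cite{DH} and \cite{Th1}), treating (1) in detail and indicating the modifications for (2) and (3). First I would identify the open orbit. Over the dense locus of graphs $\{(v,Av):v\in\mathbb{C}^n\}$ of invertible maps $A$, the group $\mathrm{GL}_n\times\mathrm{GL}_n$ acts block-diagonally by $A\mapsto hAg^{-1}$, which is two-sided multiplication on $\mathrm{GL}_n$, while the weight $(1,-1)$ torus $\mathbb{C}^*$ acts by $A\mapsto t^{-2}A$. Hence on the graph locus the quotient by $\mathbb{C}^*$ is $\mathrm{GL}_n/\mathbb{C}^\ast=\mathrm{PGL}_n=(\mathrm{PGL}_n\times\mathrm{PGL}_n)/\Delta$, which is exactly the open $G$-orbit $G/H$ of the wonderful variety we want. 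For (2) one restricts to $\mathrm{LaGr}_n(\mathbb{C}^n\oplus\mathbb{C}^n)$, whose graph locus consists of symmetric $A$, with $G=\mathrm{PGL}_n$ acting by congruence $A\mapsto gAg^{t}$ and open orbit $\mathrm{PGL}_n/\mathrm{PSO}_n$; for (3) one restricts to $\mathrm{OGr}^+_n$, whose graph locus consists of antisymmetric $A$, giving open orbit $\mathrm{PGL}_n/\mathrm{PSp}_n$.

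Next I would set up the VGIT combinatorics. Since $\mathrm{Pic}(X)=\mathbb{Z}$ in each case, the $\mathbb{C}^*$-ample cone in $\mathrm{NS}^{\mathbb{C}^*}(X)$ is one-dimensional, so the linearizations are parametrized by a single real shift $\ell$ coming from the characters of $\mathbb{C}^*$. The walls are located by computing the $\mathbb{C}^*$-fixed loci: a subspace is fixed iff it splits as $A'\oplus B'$ with $A'\subseteq\mathbb{C}^n$ in the first factor and $B'\subseteq\mathbb{C}^n$ in the second; for $\dim A'=a$ this component is $\mathrm{Gr}_a(\mathbb{C}^n)\times\mathrm{Gr}_{n-a}(\mathbb{C}^n)$ (with an isotropic analogue in cases (2) and (3)), and the $\mathbb{C}^*$-weight on the Plücker fiber there is $2a-n$. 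These weights give the finitely many walls $\ell=2a-n$; the interior ones, $a=1,\dots,n-1$, are exactly $r=n-1$ in number, matching the rank of $\overline{\mathrm{PGL}_n}$. By \cite{DH} the quotient is constant on the open chambers and changes by a flip at each wall, and the whole family is an inverse system whose limit is $\underline{X//\mathbb{C}^*}$.

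I would then identify this inverse limit with the wonderful compactification. The generic-chamber quotients are the successive strict-transform blow-ups along the rank strata that build the space of complete collineations (respectively complete quadrics, respectively complete skew forms), and $\underline{X//\mathbb{C}^*}$ sits at the top of this tower. To finish, it suffices to check that $\underline{X//\mathbb{C}^*}$ is a smooth complete $G$-variety containing $G/H$ as a dense orbit, whose complement is a normal-crossings union of $G$-stable divisors indexed by the interior walls $a=1,\dots,n-1$, with the orbit-closure incidences matching the intersections of these divisors and with nonempty total intersection. By the uniqueness of the wonderful compactification (De Concini--Procesi, cf. \cite{DCP}) this pins down a $G$-equivariant isomorphism $\underline{X//\mathbb{C}^*}\cong X$, which is the assertion.

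The main obstacle is the wall-crossing analysis together with smoothness of the limit: one must verify that each wall-crossing is a \emph{smooth} flip, so that no singularities are introduced, that the inverse limit is smooth rather than merely normal, and that the boundary divisors meet transversally with precisely the incidence pattern of the rank stratification. Equivalently, the hardest part is to match the chamber/wall combinatorics of the $\mathbb{C}^*$-VGIT with the $G$-orbit stratification of the wonderful variety, and to check that the unstable locus stays of codimension at least two across all chambers (the property that later powers the cohomological arguments). The isotropic cases (2) and (3) require redoing the fixed-locus and weight computation on $\mathrm{LaGr}_n$ and $\mathrm{OGr}^+_n$, where in (3) the choice of the connected component containing $\mathbb{C}^n\oplus 0$ must be tracked throughout.
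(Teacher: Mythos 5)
This proposition is not proved in the paper at all: it is quoted verbatim from Thaddeus \cite{Th2}, so there is no internal argument to compare yours against. What you have written is essentially an outline of Thaddeus' own strategy, and your preliminary computations are correct: the graph locus $\{(v,Av)\}$ does realize the open orbit $\mathrm{PGL}_n$, $\mathrm{PGL}_n/\mathrm{PSO}_n$, $\mathrm{PGL}_n/\mathrm{PSp}_n$ in the three cases (symmetric, resp.\ antisymmetric $A$ for the Lagrangian, resp.\ orthogonal Grassmannian), the $\mathbb{C}^*$-fixed components are the products $\mathrm{Gr}_a\times\mathrm{Gr}_{n-a}$ with Pl\"ucker weight $\pm(2a-n)$, and the resulting $n-1$ interior walls do match the rank of the wonderful variety.

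The genuine gap is that the entire content of the theorem is concentrated in the two sentences you flag as ``the main obstacle.'' That the generic-chamber quotients are the successive strict-transform blow-ups along the rank strata, that each wall-crossing is a flip which introduces no singularities, that the inverse limit $\underline{X//\mathbb{C}^*}$ is smooth with normal-crossings $G$-stable boundary divisors indexed by the interior walls, and that the incidence pattern matches the rank stratification --- none of this follows from the wall combinatorics you set up; it requires an explicit local analysis of the $\mathbb{C}^*$-action near each fixed component (weights on the normal bundle, identification of the flip with a blow-up/blow-down of projectivized normal bundles), which is precisely what occupies the bulk of \cite{Th2}. As written, your argument proves only the easy statements (open orbit, location of walls) and then asserts the conclusion. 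Two smaller points: $\mathrm{NS}^{\mathbb{C}^*}(X)$ is two-dimensional ($\mathrm{Pic}(X)\oplus\mathcal{X}(\mathbb{C}^*)$), and it is only after projectivizing the $G$-ample cone that the linearizations are parametrized by a single real number; and you should be aware that for the purposes of this paper the full strength of the proposition is never used --- the later proposition on $d_0$ shows directly, via the De Concini--Procesi criterion (\cite{DCP} (4.1)) applied to the decomposition of $\Lambda^3(V\oplus V^*)$ and the regular special highest weight $(\rho,\rho')$, that the single GIT quotient $Y(0)$ at the trivial linearization is already the wonderful compactification in the three $A_2$ cases, which completely bypasses the wall-crossing analysis. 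If you want a self-contained proof for those cases, that route is much shorter than reconstructing Thaddeus' theorem.
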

Moreover, if ${\cal L}$ is a $\mathbb{C}^*$-linearized invertible sheaf on $X$, the stability and semistability of a point $U$ can be explicitly expressed with $\dim (U\cap(\mathbb{C}^n\oplus 0))$ and $\dim (U\cap(0\oplus\mathbb{C}^n))$. For example, $U\in X$ is semistable (resp. stable) for a very ample sheaf ${\cal L}$ with trivial $\mathbb{C}^*$-linearization if and only if both these dimensions are less (resp. strictly less) than $\frac{n}{2}$.

\subsection{Case of $\overline{\textrm{PGL}_3}$}
We will now investigate the case of the wonderful compactification of $\textrm{PGL}_3$, the two other cases being similar. In this case, we only have a few non-isomorphic GIT quotients, and it happens that $\underline{X//\mathbb{C}^*}$ is exactly the GIT quotient $X//\mathbb{C}^*$ for any very ample ${\cal L}$ with trivial $\mathbb{C}^*$-linearization.\\
\indent
Let $G=\textrm{PGL}_3$, and let $T\subset B\subset G$ be the maximal torus of diagonal matrices and the Borel subgroup of upper-triangular matrices of $G$. Let us denote by $\Phi=\Phi(G,T)$ its root system, and by $\alpha_1$ and $\alpha_2$ the two simple roots obtained by this choice of $B$. Let $V=\mathbb{C}^3$, with a basis $(e_1,e_2,e_3)$, and let $(e_1^*,e_2^*,e_3^*)$ be its dual basis. Let $\mathbb{C}^*$ act on $V\oplus V^*$ with weights 1 on $V$ and -1 on $V^*$, and let $X=\textrm{Gr}_3(V\oplus V^*)$ and $Y=\overline{G}$. Let ${\cal L}$ be a very ample sheaf with trivial $\mathbb{C}^*$-linearization, let $X^{ss}(0)$ denote the set of semi-stable points for ${\cal L}$ and let $Y(0)$ be its associated GIT quotient. Since the semistable points for this ${\cal L}$ are all stable, the morphism $\pi_0:X^{ss}(0)\to Y(0)$ is a geometric quotient, ie. points in $Y(0)$ are exactly $\mathbb{C}^*$-orbits in $X^{ss}(0)$. We know we already have a $G$-equivariant dominant morphism $d_0:Y\to Y(0)$.\\
\indent
For $M\in\textrm{GL}_3$, let $\Gamma_M=\{v\oplus Mv^*,v\in V\}$ be the graph of $M$. $\Gamma_M$ is a 3-dimensional subspace of $V\oplus V^*$, with trivial intersection with $V\oplus 0$ and $0\oplus V^*$, hence $\Gamma_M\in X^{ss}(0)$. For $t\in\mathbb{C}^*$, we have
\[
t.\Gamma_M=\{tv\oplus t^{-1}Mv^*,v\in V\}=\{v\oplus t^{-2}Mv^*,v\in V\}=\Gamma_{t^{-2}M}
\]
Hence we get a morphism
\[
\begin{tabular}{rcl}
$\pi:G$&$\to$&$Y(0)$\\
$[M]$&$\mapsto$&$\{\Gamma_{x^{-1}M},x\in\mathbb{C}^*\}$
\end{tabular}
\]
Let us recall we have an embedding $X\hookrightarrow \mathbb{P}(\Lambda^3(V\oplus V^*))=:P$. $X$ is a $\mathbb{C}^*$-stable closed subset of $P$, and $Y(0)$ can be seen as a closed subset of $P(0)=\textrm{Proj}(\mathbb{C}[\Lambda^3(V\oplus V^*)]^{\mathbb{C}^*})$. Let us denote by $\iota$ the inclusion $Y(0)\to P(0)$. Let $t=\textrm{diag}(a,b,c)\in\textrm{GL}_3$. We have
\[
\iota(\pi([t]))=\{[(xe_1\oplus x^{-1}ae_1^*)\wedge(xe_2\oplus x^{-1}be_2^*)\wedge(xe_3\oplus x^{-1}ce_3^*)],x\in\mathbb{C}^*\}
\]
and $[t]\mapsto \iota(\pi([t]))$ has trivial fibres, hence $\pi_{|T}$ is an isomorphism onto its image. Let us denote by $t_1=\frac{a}{b}$ and by $t_2=\frac{b}{c}$.\\
\indent
Let $<$ be an order such that $e_1<e_2<e_3<e_1^*<e_2^*<e_3^*$, let ${\cal B}$ be the set of subfamilies of 3 vectors $b_1<b_2<b_3$ of the canonical basis of $V\oplus V^*$, and let $B\in {\cal B}$. Let $I_B=\{i|\exists j,b_j=e_i\}$ and $I_{B}^*=\{i|\exists j,b_j=e_i^*\}$, and let $\Lambda U_{I_B}^{I_B^*}:=b_1\wedge b_2\wedge b_3$. the family $(\Lambda U_{I_B}^{I_B^*})_{B\in{\cal B}}$ forms a basis of $\Lambda^3(V\oplus V^*)$, and let $((\Lambda U_{I_B}^{I_B^*})^*)$ be its dual basis. Then $\mathbb{C}[\Lambda^3(V\oplus V^*)]$ is spanned as an algebra by the $(\Lambda U_{I_B}^{I_B^*})^*$, and since the weight of $(\Lambda U_{I_B}^{I_B^*})^*$ is given by $|I_B|-|I_B^*|$, we get that $\mathbb{C}[\Lambda^3(V\oplus V^*)]^{\mathbb{C}^*}$ is of finite type. We can show that
\begin{proposition}
In the cases of $\textup{PGL}_3$, $\textup{PGL}_3/\textup{PSO}_3$ and of $\textup{PGL}_6/\textup{PSp}_6$, $d_0$ is an isomorphism.
\end{proposition}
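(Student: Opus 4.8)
The plan is to show that $d_0\colon Y\to Y(0)$ is a proper, birational, bijective morphism onto a \emph{normal} variety, and then to conclude by Zariski's main theorem. The three cases can be treated uniformly once one checks that, at the balanced (trivial) linearization, semistability coincides with stability, so that $\pi_0\colon X^{ss}(0)\to Y(0)$ is a \emph{geometric} quotient. The only difference between the semistable and the stable condition is whether the bound $n/2$ on $\dim(U\cap V)$ and $\dim(U\cap V^*)$ is attained, and I would argue that in each case it cannot be. For $\mathrm{PGL}_3$ and $\mathrm{PGL}_3/\mathrm{PSO}_3$ one has $n=3$, so $n/2=3/2$ is non-integral and both conditions reduce to $\dim(U\cap V),\dim(U\cap V^*)\le 1$. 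For $\mathrm{PGL}_6/\mathrm{PSp}_6$ one has $n=6$ and $n/2=3$; here $V$ and $V^*$ are maximal isotropic subspaces lying in the distinguished component $\mathrm{OGr}^+$, so for every $U\in X$ the numbers $\dim(U\cap V)$ and $\dim(U\cap V^*)$ are \emph{even}, which excludes the value $3$. Thus semistability equals stability in all three cases, and $Y(0)$, being the geometric quotient of the smooth (hence normal) open subvariety $X^{ss}(0)\subset X$, is normal.

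Next I would match orbits. Stratifying $X^{ss}(0)$ by the pair $\bigl(\dim(U\cap V),\dim(U\cap V^*)\bigr)$, the bounds above show that this pair takes exactly four values, namely $\{0,1\}\times\{0,1\}$ when $n=3$ and $\{0,2\}\times\{0,2\}$ in the orthogonal case, giving four $\mathbb{C}^*$-stable strata that descend to four strata of $Y(0)$, of codimensions $0,1,1,2$. On the other side, $Y$ is a wonderful variety of rank $r=2$ (the restricted system $\tilde{\Phi}$ being of type $A_2$), hence has exactly $2^2=4$ orbits, again of codimensions $0,1,1,2$. Since $d_0$ is proper and dominant it is surjective, and being equivariant it carries the four orbits of $Y$ onto the four strata of $Y(0)$; a surjection between two four-element sets is a bijection, so $d_0$ induces a bijection of orbits, which must be dimension-preserving (open orbit to open stratum, closed orbit to closed stratum, the two divisorial orbits to the two codimension-one strata).

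I would then establish birationality and quasi-finiteness. On the open orbit the explicit map $\pi$ of the previous subsection, injective as already checked on $T$ and extended by equivariance, identifies the open orbit of $Y$ with the open stratum $(0,0)$ of $Y(0)$, so $d_0$ is an isomorphism over a dense open set and is birational. On each of the remaining orbits $d_0$ restricts to a surjective equivariant morphism of homogeneous spaces of equal dimension, hence has finite fibres there, so $d_0$ is quasi-finite. Being proper and birational with normal target, $d_0$ satisfies $d_{0*}\mathcal{O}_Y=\mathcal{O}_{Y(0)}$ and has connected fibres (Zariski's connectedness theorem); connected together with finite forces every fibre to be a single point, so $d_0$ is bijective. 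Finally a proper quasi-finite morphism is finite, and a finite birational morphism onto a normal variety is an isomorphism, whence $d_0$ is an isomorphism in all three cases.

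The step I expect to be the main obstacle is the uniform verification that semistability equals stability at the balanced linearization, in particular the parity argument excluding the middle-dimensional intersections in the $\mathrm{PGL}_6/\mathrm{PSp}_6$ case: without a geometric quotient both the four-stratum count and the normality of $Y(0)$ would become considerably more delicate. Once that is secured, the orbit matching and the concluding application of Zariski's main theorem are routine, the only point requiring care being the bookkeeping of codimensions that makes the induced bijection of orbits dimension-preserving.
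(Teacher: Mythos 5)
Your proposal is correct, but it takes a genuinely different route from the paper's proof. The paper never analyses the fibres of $d_0$ at all: it shows directly that $Y(0)$ \emph{is} a De Concini--Procesi wonderful compactification, by decomposing $\Lambda^3(V\oplus V^*)$ into irreducible $G\times G$-modules, computing the invariant ring $\mathbb{C}[\Lambda^3(V\oplus V^*)]^{\mathbb{C}^*}$, embedding $Y(0)$ into $\mathbb{P}(\underline{V})$ for an auxiliary module $\underline{V}$ whose highest weight $(\rho,\rho')$ is regular and special, and then invoking \cite{DCP} (4.1) together with the uniqueness of wonderful compactifications; the isomorphism with $Y$ comes out of this characterization rather than from a study of the map. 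You instead treat $d_0$ as given and prove it is an isomorphism by Zariski's main theorem: semistability equals stability (your parity argument for $\mathrm{OGr}^+_6$ is the right one, since $V$ and $V^*$ lie in the distinguished component and intersection dimensions with a fixed maximal isotropic have constant parity on each component), hence $\pi_0$ is a geometric quotient and $Y(0)$ is normal; the four orbits of the rank-two wonderful variety $Y$ biject with the four intersection-dimension strata of $Y(0)$; whence birationality, quasi-finiteness, and finiteness, so $d_0$ is an isomorphism. What your approach buys: it is more elementary, uniform over the three cases, and it actually \emph{proves} the coincidence of stability and semistability everywhere, which the paper asserts only in passing and only for $\mathrm{PGL}_3$. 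What the paper's approach buys: the module decomposition and the embedding $Y(0)\hookrightarrow\mathbb{P}(\underline{V})$ are not throwaway computations --- they are reused in Section 3.2 (identification of the colors, the $\mathbb{C}^*$-weights $-2$ and $-4$ of the equations $F_1$ and $F_2$, the comparison ${\cal L}(U)\simeq\overline{\cal L}_{k\varpi_3}(\pi_0^{-1}(U))_n$) and again in the $E_6/F_4$ remark, where the same highest-weight criterion is applied verbatim; your argument produces none of that data. Two points you should tighten: you must record that all four strata are non-empty (routine, by exhibiting explicit subspaces, e.g.\ graphs of degenerate symmetric or antisymmetric maps), since ``each stratum is a single orbit'' is not known a priori but follows from your counting argument only once non-emptiness is in hand; and injectivity of $d_0$ on the open orbit is cleanest not ``by equivariance from $T$'' --- injectivity on a subvariety plus equivariance does not formally propagate --- but by the graph computation itself: $\Gamma_M$ and $\Gamma_N$ lie in the same $\mathbb{C}^*$-orbit iff $N\in\mathbb{C}^*M$, because every scalar in $\mathbb{C}^*$ is of the form $t^{-2}$.
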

\begin{proof}
The aim is to show that $Y(0)$ is a wonderful compactification of $G\times G/G$, and we will use that wonderful compactifications are unique up to isomorphism. Recall we have an inclusion $X\hookrightarrow \mathbb{P}(\Lambda^3(V\oplus V^*))$. Let $G'=\textrm{SL}_6$, and $\mathfrak{g}'$ its Lie algebra. We will first decompose the $G'$-module $\Lambda^3(V\oplus V^*)$ as a sum of irreducible $G\times G$-modules. It is done by looking at highest weight vectors for $G\times G$ in $\Lambda^3(V\oplus V^*)$, which are given by $\Lambda U_{1,2}^3$ (which is of weight $(\varpi_2,\varpi'_2)$, and 1 for $\mathbb{C}^*$), $\Lambda U_{1}^{2,3}$ (which is of weight $(\varpi_1,\varpi'_1)$, and -1 for $\mathbb{C}^*$), and $\Lambda U_{1,2,3}$ and $\Lambda U^{1,2,3}$, which are both of weight $0$, and respectively 3 and -3 for $\mathbb{C}^*$. Hence as a $G\times G$-module, we have
\[
\Lambda^3(V\oplus V^*)=V_{(\varpi_2,\varpi'_2)}\oplus V_{(\varpi_1,\varpi'_1)}\oplus \mathbb{C}\oplus\mathbb{C}
\]
where $V_{(\varpi_2,\varpi'_2)}=\textrm{End}(\Lambda^2(\mathbb{C}^3))$ and $V_{(\varpi_1,\varpi'_1)}=\textrm{End}(\mathbb{C}^3)$. If we denote by $(x_i)$ a basis of $V_{(\varpi_2,\varpi'_2)}^*$, by $(y_j)$ a basis of $V_{(\varpi_1,\varpi'_1)}^*$, and respectively by $z$ and $t$ a nonzero vector in $(0\oplus 0\oplus \mathbb{C}\oplus 0)^*$ and in $(0\oplus 0\oplus 0\oplus \mathbb{C})^*$, we have
\[
\mathbb{C}[\Lambda^3(V\oplus V^*)]^{\mathbb{C}^*}=\mathbb{C}[x_iy_j,x_ix_jx_kt,y_iy_jy_kz,zt]
\]
Let 
\[
\underline{V}=(V_{(\varpi_2,\varpi'_2)}\otimes V_{(\varpi_1,\varpi'_1)})\oplus(\textrm{Sym}^3(V_{(\varpi_2,\varpi'_2)})\otimes \mathbb{C})\oplus(\textrm{Sym}^3(V_{(\varpi_1,\varpi'_1)})\otimes \mathbb{C})\oplus(\mathbb{C}\otimes\mathbb{C})
\]
Then we have a surjective morphism $\textrm{Sym}(\underline{V})\to \mathbb{C}[\Lambda^3(V\oplus V^*)]^{\mathbb{C}^*}$, and we have inclusions
\[
G\hookrightarrow Y(0) \hookrightarrow \textrm{Proj}(\mathbb{C}[\Lambda^3(V\oplus V^*)]^{\mathbb{C}^*})\hookrightarrow \mathbb{P}(\underline{V})
\]
sending 1 to a $\textrm{PGL}_3$-invariant $[v]$ such that $v$ has a nonzero part in $V_{(\varpi_2,\varpi'_2)}\otimes V_{(\varpi_1,\varpi'_1)}$, which is of weight $(\rho,\rho')$. Since $\underline{V}$ has a regular special highest weight (which is $(\rho,\rho')$), and since the $\textrm{PGL}_3$-invariants in $(\textrm{Sym}^3(V_{(\varpi_2,\varpi'_2)})\otimes \mathbb{C})\oplus(\textrm{Sym}^3(V_{(\varpi_1,\varpi'_1)})\otimes \mathbb{C})\oplus(\mathbb{C}\otimes\mathbb{C})$ are all of weight 0, we get thanks to \cite{DCP} (4.1) that $Y$ is isomorphic to the closure of the image of $G$ in $\mathbb{P}(\underline{V})$, which is $Y(0)$.
\end{proof}
Now we would like to compare $X$ and $X^{ss}(0)$. The set of unstable points $X^{us}(0)=X\setminus X^{ss}(0)$ has two connected components
\[
F_1=\{U\in X,\dim(U\cap(V\oplus 0))\geq 2\}\textrm{ and }F_2=\{U\in X,\dim(U\cap(0\oplus V^*))\geq 2\}
\]
both of them being of dimension 5 (generically, we obtain $U\in F_1$ as a direct sum of a 2-dimensional subspace $W\subset V\oplus 0$ with a 1-dimensional subspace $W'\subset (V\oplus 0)/W\oplus 50\oplus V^*)$, and the same goes for $F_2$). Since $X$ is smooth and of dimension 9, if we denote by $i$ the inclusion $X^{ss}(0)\to X$, we have ${\cal O}_X=i_*{\cal O}_{X^{ss}(0)}$. Now we will show
\begin{proposition}
In the cases of $\textup{PGL}_3$, $\textup{PGL}_3/\textup{PSO}_3$ and of $\textup{PGL}_6/\textup{PSp}_6$, $D_Y$ is a $\mathbb{C}$-algebra of finite type.
\end{proposition}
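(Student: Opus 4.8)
The plan is to present $D_Y$ as a quotient of the $\mathbb{C}^*$-invariant differential operators on the Grassmannian $X$, and then to deduce finiteness from what is known about differential operators on $X$ together with Hilbert's theorem applied to the reductive group $\mathbb{C}^*$. The starting point is the geometric quotient $\pi_0:X^{ss}(0)\to Y(0)\cong Y$ (using that $d_0$ is an isomorphism, by the previous proposition). Writing $E$ for the $\mathbb{C}^*$-invariant vector field generating the action, I would first set up a descent isomorphism for differential operators: restriction to $\mathbb{C}^*$-invariant functions defines an algebra homomorphism from the invariant global operators on $X^{ss}(0)$ to $D_Y$. Since $\pi_0$ is a geometric quotient in which every semistable point is stable, this map is surjective (a differential operator on $Y$ lifts over a local trivialization of the $\mathbb{C}^*$-fibration, and the local lifts can be averaged into a global invariant lift because $\mathbb{C}^*$ is reductive), and its kernel consists of the invariant operators annihilating every invariant function, which is the invariant part of the left ideal generated by $E$. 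Because $E$ is $\mathbb{C}^*$-invariant the left ideal ${\cal D}_X E$ is $\mathbb{C}^*$-stable, and exactness of the invariants functor for $\mathbb{C}^*$ then yields
\[
D_Y\;\cong\;\Gamma(X^{ss}(0),{\cal D}_X)^{\mathbb{C}^*}\big/\big(\Gamma(X^{ss}(0),{\cal D}_X)\,E\big)^{\mathbb{C}^*},
\]
exhibiting $D_Y$ as a quotient algebra of $\Gamma(X^{ss}(0),{\cal D}_X)^{\mathbb{C}^*}$.

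Next I would identify the operators on the open set with those on all of $X$. The sheaf ${\cal D}_X$ is the union of the coherent subsheaves ${\cal D}_X^{\leq n}$ of operators of order at most $n$, each of which is locally free because $X$ is smooth (its graded pieces are the $\Sym^k{\cal T}_X$, and on a flag variety each $\Gamma(X,{\cal D}_X^{\leq n})$ is finite dimensional). Since $X^{us}(0)=F_1\cup F_2$ has codimension $4\geq 2$ in the smooth variety $X$, the extension property recalled in the introduction gives $\Gamma(X^{ss}(0),{\cal D}_X^{\leq n})=\Gamma(X,{\cal D}_X^{\leq n})$ for every $n$, whence $\Gamma(X^{ss}(0),{\cal D}_X)=D_X$. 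Combined with the first step, $D_Y$ is a quotient of $D_X^{\mathbb{C}^*}$.

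It then remains to show that $D_X^{\mathbb{C}^*}$ is a finitely generated $\mathbb{C}$-algebra. Here I would use that each $X$ occurring (an ordinary, Lagrangian, or orthogonal Grassmannian of $V\oplus V^*$) is a flag variety $G'/P$ for an appropriate semisimple $G'$, so by Beilinson--Bernstein the infinitesimal action $\mathfrak{U}(\mathfrak{g}')\to D_X$ is surjective; in particular $D_X$ is a finitely generated, Noetherian, filtered algebra whose associated graded $\mathrm{gr}\,D_X$ is a finitely generated commutative $\mathbb{C}$-algebra (a quotient of $\Sym\,\mathfrak{g}'$). The $\mathbb{C}^*$-action preserves the order filtration, and since $\mathbb{C}^*$ is reductive the invariants functor is exact, so $\mathrm{gr}(D_X^{\mathbb{C}^*})=(\mathrm{gr}\,D_X)^{\mathbb{C}^*}$. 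By Hilbert's theorem $(\mathrm{gr}\,D_X)^{\mathbb{C}^*}$ is finitely generated; lifting a finite homogeneous generating set back through the filtration shows $D_X^{\mathbb{C}^*}$ is finitely generated. As $D_Y$ is a quotient of $D_X^{\mathbb{C}^*}$, it is finitely generated too, and the argument is uniform in the three cases $\mathrm{PGL}_3$, $\mathrm{PGL}_3/\mathrm{PSO}_3$ and $\mathrm{PGL}_6/\mathrm{PSp}_6$.

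The step I expect to require the most care is the descent isomorphism of the first paragraph: one must verify both the surjectivity (that every differential operator on $Y$ genuinely lifts to a \emph{global} invariant operator on $X^{ss}(0)$, which relies on $\pi_0$ being a locally trivial $\mathbb{C}^*$-fibration off the finite-stabilizer locus together with averaging) and the exact identification of the kernel with the invariant part of the Euler left ideal. A secondary subtlety is that, although all semistable points are stable, the $\mathbb{C}^*$-stabilizers need only be finite, so one should either restrict to the free locus and control the finite-stabilizer points, or phrase the descent directly on the smooth quotient $Y(0)$ using the isomorphism $d_0$. Once the reduction $D_Y=D_X^{\mathbb{C}^*}/(\cdots)$ is established, the finiteness conclusion is essentially formal.
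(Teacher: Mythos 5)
Your proposal follows the paper's proof almost step for step: both reduce the problem to showing that $D_Y$ is a quotient of $D_X^{\mathbb{C}^*}={\cal D}_X(X^{ss}(0))^{\mathbb{C}^*}$ (using $\mathrm{codim}\,X^{us}(0)\ge 2$ to pass from $X^{ss}(0)$ to $X$), and both then obtain finite generation of $D_X^{\mathbb{C}^*}$ from $\mathrm{gr}(D_X^{\mathbb{C}^*})=(\mathrm{gr}\,D_X)^{\mathbb{C}^*}$, the Borho--Brylinski finiteness of $\mathrm{gr}\,D_X$ on the flag variety $X$, and Hilbert's theorem. The one place where you diverge is the descent step: you present $D_Y$ as the quotient of the invariant operators by the invariant part of the Euler left ideal and justify surjectivity by ``averaging'', whereas the paper first proves (via the lemma that $w_1,w_{-1}\neq 0$) that every semistable point has stabilizer exactly $\mathbb{Z}/2\mathbb{Z}$, so that $\pi_0$ is locally trivial, and then writes down explicit local sections $r_n:d\mapsto(t_i\partial_{t_i})^n\otimes d$ of the restriction map and checks their compatibility on overlaps, arriving at the stronger statement $D_X^{\mathbb{C}^*}\simeq\mathbb{C}[t\partial_t]\otimes D_Y$. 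Note that averaging over $\mathbb{C}^*$ only makes a single local lift invariant; it does not make lifts chosen over different trivializing opens agree on overlaps, so the gluing you flag as delicate genuinely remains to be done --- this is precisely what the compatibility of the $r_n$ and the relation $t_i\partial_{t_i}=\pm t_j\partial_{t_j}$ are there for in the paper, and your argument should be completed along those lines rather than by reductivity alone.
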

\begin{proof}
Let $\Omega$ be an affine open subset of $Y(0)$. Then $\pi_0(\Omega)$ is an affine open $\mathbb{C}^*$-invariant subset of $X^{ss}(0)$, and $\mathbb{C}[\Omega]=\mathbb{C}[\pi_0^{-1}(\Omega)]^{\mathbb{C}^*}$. By seeing differential operators on $\pi_0^{-1}(\Omega)$ as endomorphisms of $\mathbb{C}[\pi_0^{-1}(\Omega)]$, we get a restriction morphism
\[
\begin{array}{rcl}
{\cal D}_X(\pi_0^{-1}(\Omega))^{\mathbb{C}^*}&\to&{\cal D}_{Y(0)}(\Omega)\\
d&\mapsto&d_{|\mathbb{C}[\Omega]}
\end{array}
\]
Let $U\in X$, and let $v\in\Lambda^3(V\oplus V^*)$ such that $\iota(U)=[v]$. Let us write $v=w_{-3}\oplus w_{-1}\oplus w_{1}\oplus w_{3}$, where $w_i$ denotes the part of $v$ of weight $i$. Then
\[
X^{ss}(0)=\{U\in X|w_{-3}\textrm{ or }w_{-1}\neq 0, w_1\textrm{ or }w_3\neq 0\}
\]
We have
\begin{lemma}
For $U\in X^{ss}(0)$, $w_1$ and $w_{-1}$ are nonzero.
\end{lemma}
Let $(b_1,b_2,b_3)$ be a basis of $U$, and write $b_i=f_i\oplus f'_i$, with $f_i\in V$ and $f'_i\in V^*$. Let us choose the $b_i$ such that the nonzero $f_i$'s and the nonzero $f'_i$'s are linearly independent. Since $U$ is semistable, we have $|\{i|f_i\neq 0\}|\geq 2$ and $|\{i|f'_i\neq 0\}|\geq 2$, hence $w_1$ and $w_{-1}$ are nonzero.\\
\indent
Hence for all $U\in X^{ss}(0)$, its stabilizer for the $\mathbb{C}^*$-action is $\mathbb{Z}/2\mathbb{Z}$. We can cover $Y(0)$ by affine open subsets $\Omega_i$ such that the following diagram commutes
\[
\begin{array}{cccc}
\pi_0^{-1}(\Omega_i)&\simeq &\mathbb{C}^*\times\Omega_i&(t,x)\\
\downarrow&&\downarrow&\downmapsto\\
\Omega_i&\leftarrow&\mathbb{C}^*\times\Omega_i&(t^2,x)\\
x&\mapsfrom&(t,x)&
\end{array}
\]
We have isomorphisms
\[\begin{array}{rcl}
\mathbb{C}[\pi_0^{-1}(\Omega_i)]&\simeq&\mathbb{C}[\Omega_i]\otimes \mathbb{C}[t_i^{\pm 2}]\\
{\cal D}_X(\pi_0^{-1}(\Omega_i))&\simeq&\mathbb{C}[t_i^{\pm 2},\partial_{t_i^2}]\otimes{\cal D}_{Y(0)}(\Omega_i)\\
{\cal D}_X(\pi_0^{-1}(\Omega_i))^{\mathbb{C}^*}&\simeq&\mathbb{C}[t_i\partial_{t_i}]\otimes{\cal D}_{Y(0)}(\Omega_i)
\end{array}
\]
And we can lift differential operators on $\Omega_i$ with
\[
\begin{array}{rcl}
r_n : {\cal D}_{Y(0)}(\Omega_i)&\to&{\cal D}_X(\pi_0^{-1}(\Omega_i))^{\mathbb{C}^*}\\
d&\mapsto&\tilde{d}:=(t_i\partial_{t_i})^n\otimes d
\end{array}
\]
for all $n\geq 0$. Moreover, if $d\in {\cal D}_{Y(0)}(\Omega_i\cap \Omega_j)$, then $r_n(d_{|\Omega_i})=r_n(d_{|\Omega_j})$ on $\mathbb{C}[\pi_0^{-1}(\Omega_i\cap \Omega_j)]$, and $t_i\partial_{t_i}=\pm t_j\partial_{t_j}$. Hence we can glue the $r_n$'s together and get maps
\[
r_n:{\cal D}_{Y(0)}(\Omega)\to{\cal D}_X(\pi_0^{-1}(\Omega))^{\mathbb{C}^*}
\]
for all $n\geq 0$, and for each open $\Omega\subset Y(0)$ we have an isomorphism
\[
{\cal D}_Y(\pi_ 0^{-1}(\Omega))^{\mathbb{C}^*}\simeq\mathbb{C}[t\partial_{t}]\otimes{\cal D}_{Y(0)}(\Omega)
\]
Since $\textrm{codim} X^{us}(0)\geq 2$, we get
\[
D_X^{\mathbb{C}^*}={\cal D}_X(X^{ss}(0))^{\mathbb{C}^*}=\mathbb{C}[t\partial_t]\otimes D_Y
\]
\indent
Recall we have a filtration of $D_X$ by the $D_X^m$, which are the global differential operators of degree at most $m$. Since $X$ is a flag variety for $G'$, $\textrm{gr}(D_X)$ is a commutative $\mathbb{C}$-algebra of finite type (cf. \cite{BoB1}), and $\textrm{gr}(D_X)^{\mathbb{C}^*}$ is of finite type. Since $\mathbb{C}^*$ stabilizes $D_X^m$, $D_X^{m-1}$ admits a $\mathbb{C}^*$-stable supplement in $D_X^m$, which will be denoted by $E^m$. We have
\[
\begin{array}{rcl}
\textrm{gr}(D_X^{\mathbb{C}^*})&=&\bigoplus\limits_{n\geq 0}(D_X^{m})^{\mathbb{C}^*}/(D_X^{m-1})^{\mathbb{C}^*}\\
&=&\bigoplus\limits_{n\geq 0}(E^m)^{\mathbb{C}^*}\\
&=&(\bigoplus\limits_{n\geq 0}E^m)^{\mathbb{C}^*}\\
&=&(\textrm{gr}(D_X))^{\mathbb{C}^*}
\end{array}
\]
Hence $\textrm{gr}(D_{X}^{\mathbb{C}^*})$ and $D_X^{\mathbb{C}^*}$ are $\mathbb{C}$-algebras of finite type, and since from above we have a surjective morphism $D_X^{\mathbb{C}^*}\to D_Y$, $D_Y$ is also of finite type.
\end{proof}
We will now look at the $D_{X,{\cal L}}$-module structure of $H^0(X,{\cal L})$ for an invertible sheaf ${\cal L}$ on $Y$. Since $Y$ is smooth, we have a correspondence between invertible sheaves on $Y$ and Cartier divisors on $Y$. Let $(f_i)$ be the Cartier divisor associated to ${\cal L}$. Since the quotient $X^{ss}(0)\to Y(0)$ is a good categorical quotient, each $f_i$ can be seen as $\mathbb{C}^*$-invariant rational functions on $X^{ss}(0)$, which can be uniquely extended to $\mathbb{C}^*$-invariant functions $\tilde{f}_i$ on $X$ such that the $\frac{f_i}{f_j}$ are regular when needed. Let us denote by $\overline{\cal L}$ the invertible sheaf on $X$ associated to the Cartier divisor $(\tilde{f}_i)$. We have for each open subset $\Omega\subset Y(0)$
\[
{\cal L}(\Omega)\simeq\overline{\cal L}(\pi_0^{-1}(\Omega))^{\mathbb{C}^*}
\]
If ${\cal L}={\cal O}_{Y}(d)$ for a Weil divisor $d$, then $\overline{\cal L}={\cal O}_X(D)$ with $D:=\overline{\pi_0^{-1}(d)}$. To avoid confusion, we will now use ${\cal L}$ to denote invertible sheaves on $Y$, and $\overline{\cal L}$ to denote invertible sheaves on $X$.\\
\indent
Let
\[
T'=\Big\{
\begin{pmatrix}
*&0&0&0&0&0\\0&*&0&0&0&0\\0&0&*&0&0&0\\0&0&0&*&0&0\\0&0&0&0&*&0\\0&0&0&0&0&*
\end{pmatrix}\Big\}, 
B'=\Big\{
\begin{pmatrix}
*&*&*&*&*&*\\0&*&*&*&*&*\\0&0&*&*&*&*\\0&0&0&*&*&*\\0&0&0&0&*&*\\0&0&0&0&0&*
\end{pmatrix}\Big\}, 
P'=\Big\{
\begin{pmatrix}
*&*&*&*&*&*\\ *&*&*&*&*&*\\ *&*&*&*&*&*\\0&0&0&*&*&*\\0&0&0&*&*&*\\0&0&0&*&*&*
\end{pmatrix}\Big\}
\]
be subgroups of $G'$. Let $\beta_1,\ldots,\beta_5$ be the simple roots, and $\omega_1,\ldots,\omega_5$ the associated fundamental weights. We have $X=G'/P'$, and $P'$ is the parabolic maximal subgroup associated to the simple root $\beta_3$. Since $\textrm{Pic}(X)=\mathbb{Z}$, we know that invertible sheaves on $X$ are isomorphic to some $\overline{\cal L}_{k\varpi_3}$. Moreover, $\mathbb{C}^*$ acts on $X$ via the one--parameter subgroup $2\varpi_3^{\vee}$.\\
\indent
Let us recall that $\textrm{Pic}(Y)$ is spanned by ${\cal O}_Y(d_1)$ and ${\cal O}_Y(d_2)$, where $d_1$ and $d_2$ are the colors (irreducible $B$-stable but not $G$-stables divisors) of $X$. Let $D_i=\overline{\pi_0^{-1}(d_i)}$. There exists $k_1$ and $k_2$ such that ${\cal O}_X(D_i)=\overline{\cal L}_{k_i\varpi_3}$. The $B\times B^-$ open orbit of $G$ is given by the equations
\[
g_{3,3}\neq 0 \textrm{ and }g_{2,2}g_{3,3}-g_{2,3}g_{3,2}\neq 0
\]
where $g_{ij}$ denotes the $(i,j)$-coefficient of $g\in\textrm{GL}_3$. Up to renumbering the $d_i$'s, the $D_i$'s are given on $\pi_0^{-1}(G)$ by the equations
\[
F_1:=\frac{(\Lambda U_{1,2}^3)^*}{(\Lambda U_{1,2,3})^*}=0\textrm{ and }F_2:=\frac{(\Lambda U_1^{2,3})^*}{(\Lambda U_{1,2,3})^*}=0
\]
which are respectively of weight -2 and -4 for the action of $\mathbb{C}^*$. Let $U_i$ be a trivializing open subset for $\overline{\cal L}_{k_i\varpi_3}$, and let $\Omega_i$ be its intersection with $\pi_0^{-1}(G)$. We have
\[
\begin{array}{rcl}
{\cal O}_X(D_i)(\Omega_i)^{\mathbb{C}^*}&=&{\cal O}_X(\Omega_i)_{-2^i}.\frac{1}{F_i}\\
&=&\overline{\cal L}_{k_i\varpi_3}(\Omega_i)_{3k_i-2^i}.\frac{1}{F_i\sigma_{k_i\varpi_3}}
\end{array}
\]
where $\sigma_{k_i\varpi_3}$ is a trivializing section of $\overline{\cal L}_{k_i\varpi_3}$, and where $\overline{\cal L}(\Omega_i)_n$ denotes the weight $n$ part of $\overline{\cal L}(\Omega_i)$. Hence for each $\mathbb{C}^*$-invariant open subset $U\subset X$, we have
\[
{\cal O}_X(D_i)^{\mathbb{C}^*}\simeq\overline{\cal L}_{k_i\varpi_3}(\Omega_i)_{3k_i-2^i}
\]
and for all invertible sheaf ${\cal L}$ on $Y$, there exists $k,n\in\mathbb{Z}$ such that for all open $U\subset Y$ we have
\[
{\cal L}(U)\simeq\overline{\cal L}_{k\varpi_3}(\pi_0^{-1}(U))_n
\]
Recall that if ${\cal L}$ is an invertible sheaf on $Y$, we can define the sheaf of differential operators of ${\cal L}$ as 
\[
{\cal D}_{Y,{\cal L}}={\cal L}\otimes{\cal D}_Y\otimes{\cal L}^{\otimes -1}
\]
and let $D_{Y,{\cal L}}$ be the algebra of its global section. We can show
\begin{theoreme}
In the cases of $\textup{PGL}_3$, $\textup{PGL}_3/\textup{PSO}_3$ and of $\textup{PGL}_6/\textup{PSp}_6$, let ${\cal L}$ be a invertible sheaf on $Y$. Then $H^0(Y,{\cal L})$ is either 0 or simple as a left $D_{Y,{\cal L}}$-module.
\end{theoreme}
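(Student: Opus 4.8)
The plan is to realise $H^0(Y,{\cal L})$ as a single $\mathbb{C}^*$-weight space of the global sections of a line bundle on the flag variety $X=G'/P'$, and to deduce its simplicity from the simplicity of those global sections by a general fact about graded modules. First I would identify the space explicitly. For suitable $k,n\in\mathbb{Z}$ we have ${\cal L}(U)\simeq\overline{\cal L}_{k\varpi_3}(\pi_0^{-1}(U))_n$ for every open $U\subseteq Y$, so taking $U=Y$ gives $H^0(Y,{\cal L})\simeq\overline{\cal L}_{k\varpi_3}(X^{ss}(0))_n$. Since $\textrm{codim}_X X^{us}(0)\geq 2$, $X$ is smooth, and $\overline{\cal L}_{k\varpi_3}$ is locally free, every section over $X^{ss}(0)$ extends uniquely to $X$; hence $\overline{\cal L}_{k\varpi_3}(X^{ss}(0))=H^0(X,\overline{\cal L}_{k\varpi_3})=:M$, and $H^0(Y,{\cal L})$ is the weight-$n$ component $M_n$ of $M$.

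Next I would transport the $D_{Y,{\cal L}}$-action onto $M_n$. Twisted differential operators are locally isomorphic to ordinary ones, so I would rerun the trivialisation argument of the preceding proposition with $\overline{\cal L}_{k\varpi_3}$ in place of ${\cal O}_X$ to obtain the twisted analogue $D_{X,\overline{\cal L}_{k\varpi_3}}^{\mathbb{C}^*}\simeq\mathbb{C}[t\partial_t]\otimes D_{Y,{\cal L}}$. A $\mathbb{C}^*$-invariant operator preserves each weight space, and on $M_n$ the Euler operator $t\partial_t$ acts by a scalar; therefore the action of $A:=D_{X,\overline{\cal L}_{k\varpi_3}}^{\mathbb{C}^*}$ on $M_n$ factors through $D_{Y,{\cal L}}$, and the $D_{Y,{\cal L}}$-submodules of $M_n$ coincide with its $A$-submodules.

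It then remains to see that $M_n$ is $0$ or simple over $A$. Because $X=G'/P'$ is a flag variety, $M=H^0(X,\overline{\cal L}_{k\varpi_3})$ is itself either $0$ (when $k<0$) or a simple $D_{X,\overline{\cal L}_{k\varpi_3}}$-module: for $k\geq 0$ it is the irreducible $G'$-module attached to $k\varpi_3$, and the surjectivity of $\mathfrak{U}(\mathfrak{g}')\to D_{X,\overline{\cal L}_{k\varpi_3}}$ (cf. \cite{BB}, \cite{BoB1}) upgrades its $\mathfrak{g}'$-irreducibility to $D$-simplicity. Finally I would invoke the following purely algebraic fact: if $R=\bigoplus_{d}R_d$ is a $\mathbb{Z}$-graded ring and $N=\bigoplus_m N_m$ a simple graded $R$-module, then each $N_m$ is $0$ or simple over $R_0$, for if $0\neq W\subseteq N_m$ is an $R_0$-submodule then $RW$ is a nonzero graded submodule, hence all of $N$, and comparing degree-$m$ parts forces $W=(RW)_m=R_0W=N_m$. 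Applying this with $R=D_{X,\overline{\cal L}_{k\varpi_3}}$, $N=M$ and $m=n$, the $\mathbb{C}^*$-grading being compatible, shows $M_n$ is $0$ or simple over $A$, hence over $D_{Y,{\cal L}}$, which is the claim.

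The main obstacle I expect is the second step: one must verify carefully not only that the invariant twisted operators decompose as $\mathbb{C}[t\partial_t]\otimes D_{Y,{\cal L}}$, but above all that the $D_{Y,{\cal L}}$-action thus produced on $M_n$ is exactly the tautological action on $H^0(Y,{\cal L})$, i.e. that the module structures upstairs on $X$ and downstairs on $Y$ match through the geometric quotient $\pi_0$. Once this compatibility is secured, the remaining ingredients are a citation (the flag-variety simplicity) and a formal grading lemma.
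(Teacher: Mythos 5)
Your proposal is correct and follows essentially the same route as the paper: identify $H^0(Y,{\cal L})$ with the weight-$n$ piece of $H^0(X,\overline{\cal L}_{k\varpi_3})$ via extension of sections across the codimension-$\geq 2$ unstable locus, use the flag-variety simplicity of $H^0(X,\overline{\cal L}_{k\varpi_3})$ over $D_{X,\overline{\cal L}_{k\varpi_3}}$ together with the graded-module lemma to get simplicity of the weight space over the $\mathbb{C}^*$-invariant operators, and then check that this invariant algebra acts on the weight-$n$ piece through $D_{Y,{\cal L}}$ (the paper does this last compatibility by the explicit decomposition of ${\cal D}_{X,k\varpi_3}(\pi_0^{-1}(\Omega))^{\mathbb{C}^*}$ into pieces $t^{2p}\sigma\otimes t^{2q}\partial(t^2\partial_{t^2})^k\otimes t^{-2p-2q}\tau$ acting by scalars times $\sigma\otimes\partial\otimes\tau$, which is exactly the obstacle you flag). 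The only difference is cosmetic: you spell out the grading lemma that the paper uses implicitly when passing from $D_{X,k\varpi_3}$-simplicity to $D_{X,k\varpi_3}^{\mathbb{C}^*}$-simplicity of the weight space.
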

\begin{proof}
Let us assume $H^0(Y,{\cal L})\neq 0$. Let $k,n$ be integers such that for all open $U\subset Y$, we have ${\cal L}(U)\simeq\overline{\cal L}_{k\varpi_3}(\pi_0^{-1}(U))_n$. We have $H^0(Y,{\cal L})=H^0(X,\overline{\cal L}_{k\varpi_3})_n$, and since $X$ is a flag variety, $H^0(X,\overline{\cal L}_{k\varpi_3})$ is a simple left $D_{X,k\varpi_3}$-module, and $H^0(X,\overline{\cal L}_{k\varpi_3})_n$ is a simple left $D_{X,k\varpi_3}^{\mathbb{C}^*}$-module.\\
\indent
Let $\Omega\subset Y$ be an affine open subset. We write
\[
{\cal D}_{X,k\varpi_3}(\pi_0^{-1}(\Omega))^{\mathbb{C}^*}=\bigoplus\limits_{p,q\in\mathbb{Z}}\overline{\cal L}_{k\varpi_3}(\pi_0^{-1}(\Omega))_{n+2p}\otimes{\cal D}_X(\pi_0^{-1}(\Omega))_{2q}\otimes\overline{\cal L}^{\otimes -1}_{k\varpi_3}(\pi_0^{-1}(\Omega))_{-n-2p-2q}
\]
A section of $((\overline{\cal L}_{k\varpi_3})_{n+2p}\otimes({\cal D}_X)_{2q}\otimes(\overline{\cal L}^{\otimes -1}_{k\varpi_3})_{-n-2p-2q})(\pi_0^{-1}(\Omega))$ can be written as a linear combination of
\[
s=t^{2p}\sigma\otimes t^{2q}\partial(t^2\partial_{t^2})^k\otimes t^{-2p-2q}\tau
\]
where $\sigma\in\overline{\cal L}_{k\varpi_3}(\pi_0^{-1}(\Omega))_n$, $\partial\in{\cal D}_Y(\Omega)$, $\tau\in\overline{\cal L}^{\otimes -1}_{k\varpi_3}(\pi_0^{-1}(\Omega))_{-n}$ and $k\geq 0$. $s$ acts on $\overline{\cal L}_{k\varpi_3}(\pi_0^{-1}(\Omega))_n$ as $c(p,q,k,n).\sigma\otimes\partial\otimes\tau$, where $c(p,q,k,n)$ is a scalar which does not depend on $\Omega$ ($\partial$ commutes with $t^2$). Hence $H^0(X,\overline{\cal L})_n$ is a simple left $H^0(X,(\overline{\cal L}_{k\varpi_3})_{n}\otimes{\cal D}_Y \otimes(\overline{\cal L}^{\otimes -1}_{k\varpi_3})_{-n})$-module, and since
\[
H^0(X,(\overline{\cal L}_{k\varpi_3})_{n}\otimes {\cal D}_Y \otimes(\overline{\cal L}^{\otimes -1}_{k\varpi_3})_{-n})=D_{Y,{\cal L}}
\]
it is a simple left $D_{Y,{\cal L}}$-module.
\end{proof}
\begin{remark}
When $\tilde{G}$ is a connected adjoint group of type $E_6$, there is an order two outer automorphism of $E_6$ whose fixed points form a group $H$ of type $F_4$, and the restricted root system of the symmetric space $G/H$ is of type $A_2$. In this case, we still can show that $Y=\overline{\tilde{G}/H}$ is isomorphic to a GIT quotient by a $\mathbb{C}^*$ of a flag variety $G'/P$, where $G'$ is simply connected of type $E_7$ and where $P$ is a maximal parabolic, and that $D_Y$ is of finite type. This is done using the following description of groups of type $E_6$ and $E_7$, which can be found in \cite{Sp}.\\
\indent
Let $A:=V_{\varpi_1}$ and $B:=V_{\varpi_6}$ be the two non-isomorphic irreducible representations of dimension 27 of a group of type $E_6$. Then we can construct a connected simply connected group $G$ of type $E_6$ as a subgroup of $\textrm{GL}(A)\times \textrm{GL}(B)$, and $H$ can be seen as the fixed points of the automorphism permuting $A$ and $B$. Moreover, as vector spaces, $A\simeq B\simeq M_{3}\oplus M_3\oplus M_3$, where $M_3$ is the space of $3\times 3$ matrices. Define 
\[
\begin{array}{rcl}
i:\mathbb{C}^*&\to &\textrm{GL}(A)\times \textrm{GL}(B)\\
t&\mapsto&(t,t^{-1})
\end{array}
\]
and $H_1=i(\mathbb{C}^*).G$.\\
\indent
Now let $V_{\varpi_7}$ be the irreducible representation of highest weight $\varpi_7$ of $G'$. Its dimension is 56, and it can be decomposed as a $G$-module as
\[
V_{\varpi_7}=A\oplus B\oplus\mathbb{C}\oplus\mathbb{C}
\]
Moreover, we can see $H_1$ as the subgroup of $G'$ stabilizing this decomposition. Let $w=I_3\oplus I_3\oplus I_3\in M_{3}\oplus M_3\oplus M_3$, and let $v=(w,w,1,1)\in V_{\varpi_7}$. Then the map
\[
\begin{array}{rcl}
H_1&\to&\mathbb{P}(V_{\varpi_7})\\
h&\mapsto&h.[v]
\end{array}
\]
has fibres isomorphic to $H$, which gives an inclusion $H_1/H\hookrightarrow \mathbb{P}(V_{\varpi_7})$, and this map factors through $G'/P$, where $P$ is a conjugate of $P_{\varpi_7}$. Hence we have $H_1/H\hookrightarrow G'/P \hookrightarrow \mathbb{P}(V_{\varpi_7})$. Moreover, $i$ is inducing an action of $\mathbb{C}^*$ on $V_{\varpi_7}$, with weights 1, -1, -3 and 3 on $A$, $B$, $0\oplus 0\oplus \mathbb{C}\oplus 0$ and $0\oplus 0\oplus 0\oplus \mathbb{C}$, and we have morphisms
\[
\tilde{G}/H\hookrightarrow (G'/P)//\mathbb{C}^*\hookrightarrow \mathbb{P}(V_{\varpi_7})//\mathbb{C}^*
\]
where quotients are done for ample line bundles with trivial $\mathbb{C}^*$-linearization. If we denote by $(x_i)$ a basis of $A^*$, $(y_j)$ a basis of $B^*$, and by $z$ and $t$ two nonzero vectors respectively of $(0\oplus 0\oplus \mathbb{C}\oplus 0)^*$ and $(0\oplus 0\oplus 0\oplus \mathbb{C})^*$, we have
\[
\mathbb{C}[V_{\varpi_7}]^{\mathbb{C}^*}=\mathbb{C}[x_iy_j,x_ix_jx_kt,y_iy_jy_kt,zt]
\]
Let 
\[
\underline{V}=(A\otimes B)\oplus(\textrm{Sym}^3(A)\otimes \mathbb{C})\oplus(\textrm{Sym}^3(B)\otimes \mathbb{C})\oplus(\mathbb{C}\otimes\mathbb{C})
\]
Then we have a surjective morphism $\textrm{Sym}(\underline{V})\to \mathbb{C}[V_{\varpi_7}]^{\mathbb{C}^*}$, and we have inclusions
\[
\tilde{G}/H \hookrightarrow (G'/P)//\mathbb{C}^* \hookrightarrow \textrm{Proj}(\mathbb{C}[V_{\varpi_7}]^{\mathbb{C}^*})\hookrightarrow \mathbb{P}(\underline{V})
\]
Since $\underline{V}$ has a regular special highest weight (which is $\varpi_1+\varpi_6$), and since the $H$-invariants in $(\textrm{Sym}^3(A)\otimes \mathbb{C})\oplus(\textrm{Sym}^3(B)\otimes \mathbb{C})\oplus(\mathbb{C}\otimes\mathbb{C})$ have weight 0, $Y$ is isomorphic to the closure of the image of $\tilde{G}/H$ in $\mathbb{P}(\underline{V})$, which is $(G'/P)//\mathbb{C}^*$.\\
\indent
Using the description in \cite{BP} of (semi-)stable points for GIT quotients by a torus, we still have that
\[
X^{s}(0)=X^{ss}(0)=\{U\in X|w_{-3}\textrm{ or }w_{-1}\neq 0, w_1\textrm{ or }w_3\neq 0\}
\]
and that the quotient $(G'/P)//\mathbb{C}^*$ is geometric. Hence the same proof than before gives us that $D_Y$ is of finite type, since $G'/P$ is a flag variety for $G'$.
\end{remark}

\section{$D_{Y,{\cal L}}$-module structure of higher cohomology groups}
Since $X^{us}(0)$ is not empty, it is not necessarily true when $i>0$ that $H^i(Y,{\cal L})=H^i(X,\overline{\cal L}_{k\varpi_3})_n$ for some $k$ and $n$. The difference can be expressed in terms of local cohomology groups $H^i_{X^{us}}(X,\overline{\cal L}_{k\varpi_3})$ that we will now introduce. One can check \cite{SGA2}, \cite{Ha}, \cite{Ke1} for more details.
\subsection{Local cohomology}
Let $X$ be an algebraic variety, let $Z\subset X$ be a closed subset, and let ${\cal F}$ be a ${\cal O}_X$-module. Let 
\[
\Gamma_Z(X,{\cal F})=\{\sigma\in\Gamma(X,{\cal F})|\sigma_{|X\setminus Z}=0\}
\]
If $Z_2\subset Z_1$ are two closed subsets of $X$, we have an injection $\Gamma_{Z_2}(X,{\cal F})\to \Gamma_{Z_1}(X,{\cal F})$, and let
\[
\Gamma_{Z_1/Z_2}(X,{\cal F})=\Gamma_{Z_1}(X,{\cal F})/\Gamma_{Z_2}(X,{\cal F})
\]
Let $H^i_{Z}(X,\bullet)$ and $H^i_{Z_1/Z_2}(X,\bullet)$ be the right derived functors of $\Gamma_{Z}(X,\bullet)$ and $\Gamma_{Z_1/Z_2}(X,\bullet)$. This is done using Godement resolutions for $H^i_{Z_1/Z_2}(X,\bullet)$ since $\Gamma_{Z_1/Z_2}(X,\bullet)$ is not necessarily left exact. Remark we have $H^i_{Z/\emptyset}(X,{\cal F})=H^i_{Z}(X,{\cal F})$. Now let $\underline{\Gamma}_{Z_1/Z_2}({\cal F})$ be the sheaf associated to the presheaf $U\mapsto \Gamma_{(Z_1\cap U)/(Z_2\cap U)}(U,{\cal F}_{|U})$, and ${\cal H}^i_{Z_1/Z_2}({\cal F})$ be the sheaf associated to the presheaf $U\mapsto H_{(Z_1\cap U)/(Z_2\cap U)}(U,{\cal F}_{|U})$. We will recall a few properties of these functors.
\begin{proposition} Let ${\cal F}$ be a ${\cal O}_X$-module, and let $Z_3\subset Z_2\subset Z_1$ be three closed subsets of $X$.
\item[(1)] We have two long exact sequences
\[
\ldots\to H^{i-1}_{Z_1/Z_2}(X,{\cal F})\to H^i_{Z_2/Z_3}(X,{\cal F})\to H^i_{Z_1/Z_3}(X,{\cal F})\to H^i_{Z_1/Z_2}(X,{\cal F})\to H^{i+1}_{Z_2/Z_3}(X,{\cal F})\to\ldots
\]
and
\[
\ldots\to {\cal H}^{i-1}_{Z_1/Z_2}({\cal F})\to {\cal H}^i_{Z_2/Z_3}({\cal F})\to {\cal H}^i_{Z_1/Z_3}({\cal F})\to {\cal H}^i_{Z_1/Z_2}({\cal F})\to {\cal H}^{i+1}_{Z_2/Z_3}({\cal F})\to\ldots
\]
In particular, if $Z_1=X$ and $Z_3=\emptyset$, and if $j:X\setminus Z_2\to X$ is the inclusion, we have a short exact sequence
\[
0\to {\cal H}^0_{Z_2}({\cal F})\to {\cal F}\to j_*({\cal F}_{|X\setminus Z_2})\to {\cal H}^{1}_{Z_2}({\cal F})\to 0
\]
and isomorphisms ${\cal H}^{i+1}_{Z_2}({\cal F})=R^{i}j_*({\cal F}_{|X\setminus Z_2})$ for $i>0$.
\item[(2)] We have an isomorphism
\[
H^i_{Z_1/Z_2}(X,{\cal F})\simeq H^i_{Z_1\setminus Z_2}(X\setminus Z_2,{\cal F})
\]
\item[(3)] (Excision lemma) If $V\subset X$ is open and containing $Z_1$, we have
\[
H^i_{Z_1/Z_2}(X,{\cal F})\to H^i_{Z_1/Z_2}(V,{\cal F}_{|V})
\]
\item[(4)] Let $X'$ be an algebraic variety, let $f:X\to X'$ be a morphism, and $Z'_2\subset Z'_1$ two closed subsets of $X'$. We have a spectral sequence
\[
H^p_{Z_1/Z_2}(X',R^qf_*{\cal F})\Rightarrow H^*_{f^{-1}Z_1/f^{-1}Z_2}(X,{\cal F})
\]
\item[(5)] If $j:X\setminus Z_2\to X$ is the inclusion, we have a spectral sequence
\[
R^pj_*{\cal H}^q_{Z_1\setminus Z_2}({\cal F}_{|X\setminus Z_2})\Rightarrow {\cal H}^*_{Z_1/Z_2}({\cal F})
\]
\item[(6)] If $Z_1$ is the disjoint union of two closed subsets $A$ and $B$, we have
\[
H^i_{Z_1}(X,{\cal F})=H^i_{A}(X,{\cal F})\oplus H^i_{B}(X,{\cal F})
\]
\item[(7)] If $W_2\subset W_1$ are two closed subsets of $X$, let $S_1:=Z_1\cap W_1$, and let $S_2:=(W_1\cap Z_2)\cup(W_2\cap Z_1)$. We have a spectral sequence
\[
H^p_{W_1/W_2}(X,{\cal H}^q_{Z_1/Z_2}({\cal F}))\Rightarrow H^*_{S_1/S_2}(X,{\cal F})
\]
In particular, when $W_1=X$ and $W_2=\emptyset$, we get the spectral sequence
\[
H^p(X,{\cal H}^q_{Z_1/Z_2}({\cal F}))\Rightarrow H^*_{Z_1/Z_2}(X,{\cal F}) \indent (*)
\]
\end{proposition}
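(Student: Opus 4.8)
The plan is to treat all seven items as formal consequences of the derived-functor formalism, the one genuine subtlety being that the relative functors $\Gamma_{Z_1/Z_2}(X,-)$ and $\underline{\Gamma}_{Z_1/Z_2}$ are quotients of left exact functors and need not themselves be left exact; accordingly I would compute every derived functor appearing here by means of a functorial flasque (Godement) resolution ${\cal F}\to{\cal G}^\bullet$, exactly as the definition of $H^i_{Z_1/Z_2}$ was set up. In each case the recipe is the same: first identify the relevant functors in degree zero (a statement about sections, proved via the extension property of flasque sheaves), then pass to cohomology of the Godement resolution, noting that restriction to an open subset, direct image $f_*$, and global sections all preserve flasqueness, so the resolutions stay adapted after every operation. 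For the long exact sequences in (1) I would start from the tautological short exact sequence of functors $0\to\Gamma_{Z_2/Z_3}\to\Gamma_{Z_1/Z_3}\to\Gamma_{Z_1/Z_2}\to 0$, which is the third isomorphism theorem applied to $\Gamma_{Z_3}\subset\Gamma_{Z_2}\subset\Gamma_{Z_1}$ and holds on every sheaf, in particular on each ${\cal G}^i$; applying it to ${\cal G}^\bullet$ yields a short exact sequence of complexes whose long exact cohomology sequence is the claim, and the sheaf version is identical after sheafifying. The special case $Z_1=X$, $Z_3=\emptyset$ then follows by substituting ${\cal H}^0_X=\mathrm{id}$ and ${\cal H}^i_X=0$ for $i>0$, together with ${\cal H}^i_{X/Z_2}({\cal F})\simeq R^ij_*({\cal F}_{|X\setminus Z_2})$, which is the $Z_1=X$ degeneration of item (5).

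For (2), (3) and (6) all the work is in degree zero. In (2), restricting a section of $\Gamma_{Z_1}(X,{\cal F})$ to $X\setminus Z_2$ produces a section supported on the closed set $Z_1\setminus Z_2$, and this restriction vanishes precisely on $\Gamma_{Z_2}(X,{\cal F})$; surjectivity for flasque ${\cal F}$ is the extension property, so $\Gamma_{Z_1/Z_2}(X,-)\simeq\Gamma_{Z_1\setminus Z_2}(X\setminus Z_2,-)$ on flasque sheaves, and deriving gives the isomorphism. Item (3) is the same argument with $V$ in place of $X\setminus Z_2$: a section supported in $Z_1\subset V$ is determined by, and extends from, its restriction to $V$. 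For (6), disjointness of the closed sets $A$ and $B$ lets me glue along the open cover $\{X\setminus B,\,X\setminus A\}$ to split any section supported on $A\sqcup B$ as $\sigma_A+\sigma_B$ (the two pieces agree, being zero, on $X\setminus(A\cup B)$), giving $\Gamma_{Z_1}=\Gamma_A\oplus\Gamma_B$ and hence the direct sum on all derived functors.

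The three spectral sequences (4), (5) and (7) are instances of the composite-functor formalism. The degree-zero identities I would establish are $\Gamma_{f^{-1}Z_1/f^{-1}Z_2}(X,-)=\Gamma_{Z_1/Z_2}(X',f_*-)$ for (4), using $f^{-1}(X'\setminus Z_i)=X\setminus f^{-1}Z_i$; the sheaf analogue of (2) through $j_*$ for (5); and $\Gamma_{W_1/W_2}(X,\underline{\Gamma}_{Z_1/Z_2}(-))=\Gamma_{S_1/S_2}(X,-)$ for (7). Once these are in hand, each spectral sequence arises by applying the outer functor to the Godement resolution of the inner one and taking the two filtrations of the resulting double complex; the $E_2$-page is computed using that the inner operation sends flasques to objects acyclic for the outer functor ($f_*$ and $j_*$ preserve flasqueness, and a flasque sheaf is acyclic for every relative $\Gamma$). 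The local-to-global spectral sequence $(*)$ is then the case $W_1=X$, $W_2=\emptyset$ of (7), for which $S_1=Z_1$ and $S_2=Z_2$.

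The main obstacle is exactly the failure of left exactness of the relative functors, which blocks a verbatim appeal to the Grothendieck spectral sequence theorem: I must construct each spectral sequence directly from a double complex, built from a filtered (or Cartan--Eilenberg) Godement resolution, and verify acyclicity at each stage rather than quoting it. The most delicate piece of bookkeeping is the degree-zero identity underlying (7), namely checking that imposing the $Z_1/Z_2$ support condition and then the $W_1/W_2$ condition coincides with the single $S_1/S_2$ condition, with $S_1=Z_1\cap W_1$ and $S_2=(W_1\cap Z_2)\cup(W_2\cap Z_1)$; this requires tracking the two vanishing conditions on a local section simultaneously and is the step where set-level errors are easiest to commit.
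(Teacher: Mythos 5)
Your proposal is correct, and since the paper gives no proof of this proposition at all --- it is recalled as a collection of standard facts with pointers to \cite{SGA2}, \cite{Ha} and \cite{Ke1} --- your reconstruction via Godement resolutions, the third-isomorphism-theorem short exact sequence of functors for (1), the degree-zero identifications for (2), (3), (6), and the double-complex construction of the composite-functor spectral sequences (with the non-left-exactness of $\Gamma_{Z_1/Z_2}$ correctly identified as the reason a verbatim Grothendieck spectral sequence does not apply) is precisely the argument found in those references, Kempf's in particular. No discrepancy with the paper to report.
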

Let us show a sufficient condition for the last spectral sequence to degenerate.
\begin{proposition}
If $Z_1\setminus Z_2$ is affine, and is ${\cal F}$ is quasi-coherent, we have
\[
H^i_{Z_1/Z_2}(X,{\cal F})=H^0(X,{\cal H}^i_{Z_1/Z_2}(X,{\cal F}))
\]
\end{proposition}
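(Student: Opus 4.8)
The statement is exactly the assertion that the spectral sequence $(*)$ of the previous proposition degenerates, so the plan is to establish that degeneration by killing all of its higher rows. First I would pass from the pair $(Z_1,Z_2)$ on $X$ to ordinary local cohomology on the open set $W:=X\setminus Z_2$. Writing $j:W\hookrightarrow X$ for the inclusion and $S:=Z_1\setminus Z_2$, part (2) gives $H^i_{Z_1/Z_2}(X,{\cal F})\simeq H^i_S(W,{\cal F}_{|W})$, and $S$ is now a \emph{closed} subset of $W$ which is moreover \emph{affine} by hypothesis. Applying the spectral sequence $(*)$ on $W$ to the pair $(S,\emptyset)$ (and using ${\cal H}^i_{S/\emptyset}={\cal H}^i_S$) I obtain
\[
H^p(W,{\cal H}^q_S({\cal F}_{|W}))\Rightarrow H^{p+q}_S(W,{\cal F}_{|W}).
\]

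The technical heart is to show that the rows of this sequence vanish, i.e. $H^p(W,{\cal H}^q_S({\cal F}_{|W}))=0$ for $p>0$; this forces degeneration at $E_2$ and yields $H^i_S(W,{\cal F}_{|W})=H^0(W,{\cal H}^i_S({\cal F}_{|W}))$. For this I would prove the following lemma: a quasi-coherent sheaf ${\cal G}$ on a Noetherian scheme $W$ whose support is contained in a closed \emph{affine} subset $S$ has $H^p(W,{\cal G})=0$ for all $p>0$. The sheaves ${\cal H}^q_S({\cal F}_{|W})$ are quasi-coherent (right derived functors of $\underline\Gamma_S$ applied to a quasi-coherent sheaf on a Noetherian scheme) and set-theoretically supported on $S$, so the lemma applies. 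To prove it, let ${\cal I}$ be the ideal sheaf of $S$ and put ${\cal G}_n=\{s\in{\cal G}\mid {\cal I}^n s=0\}$. Since ${\cal G}$ is supported on $S$ and $W$ is Noetherian, every local section is annihilated by a power of ${\cal I}$, so ${\cal G}=\varinjlim_n{\cal G}_n$. Each ${\cal G}_n$ is a quasi-coherent ${\cal O}_W/{\cal I}^n$-module, hence the pushforward of a quasi-coherent sheaf from the closed subscheme $S_n:=(S,{\cal O}_W/{\cal I}^n)$; as $(S_n)_{\mathrm{red}}=S_{\mathrm{red}}$ is affine and a scheme is affine once its reduction is, $S_n$ is affine. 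Serre's theorem gives $H^p(W,{\cal G}_n)=H^p(S_n,\cdot)=0$ for $p>0$, and since cohomology commutes with filtered direct limits on the Noetherian space $W$ we conclude $H^p(W,{\cal G})=\varinjlim_n H^p(W,{\cal G}_n)=0$.

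It remains to match the answer found on $W$ with the quantity on $X$. Here I would use the very definition of the relative local cohomology sheaf: over an open $U\subset X$ its defining presheaf value is $H^i_{(Z_1\cap U)/(Z_2\cap U)}(U,{\cal F})$, which by part (2) equals $H^i_{S\cap U}(U\cap W,{\cal F})$, i.e. the value of the presheaf defining $j_*{\cal H}^i_S({\cal F}_{|W})$. This should give ${\cal H}^i_{Z_1/Z_2}({\cal F})=j_*{\cal H}^i_S({\cal F}_{|W})$, whence $H^0(X,{\cal H}^i_{Z_1/Z_2}({\cal F}))=H^0(W,{\cal H}^i_S({\cal F}_{|W}))$. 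Combining this identification with the degeneration on $W$ and with part (2) chains the three equalities together and yields the claim.

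The main obstacle is not the vanishing lemma, which is a standard affineness argument, but precisely the passage between $X$ and $W$: as a sheaf on $X$ the support of ${\cal H}^i_{Z_1/Z_2}({\cal F})$ is only contained in the closure $\overline{Z_1\setminus Z_2}$, which need not be affine, so one cannot apply the lemma on $X$ directly and must transport the computation to $W$, where the support is the genuinely affine set $S$. The delicate point I would verify with care is therefore the sheaf identification ${\cal H}^i_{Z_1/Z_2}({\cal F})=j_*{\cal H}^i_S({\cal F}_{|W})$—in particular that sheafification commutes with $j_*$ in this situation and that ${\cal H}^i_{Z_1/Z_2}({\cal F})$ has no sections supported on $Z_2$—since this is exactly what allows the degeneration obtained on $W$ to be read off as a statement about global sections on $X$.
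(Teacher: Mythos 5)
Your proposal is correct and follows essentially the same route as the paper: reduce to $W=X\setminus Z_2$ via the excision isomorphism of part (2), kill $H^{p}(W,{\cal H}^q_{Z_1\setminus Z_2}({\cal F}_{|W}))$ for $p>0$ by exploiting that these quasi-coherent sheaves are supported on the affine closed set $Z_1\setminus Z_2$ and are direct limits of pushforwards from affine thickenings (the paper organizes this through the direct system ${\cal E}xt^i_{{\cal O}_{X\setminus Z_2}}({\cal O}_{X\setminus Z_2}/{\cal I}_{Z_1\setminus Z_2}^{\otimes m},{\cal F}')$ where you use the ${\cal I}$-torsion filtration, which amounts to the same affineness-plus-direct-limits argument), and then degenerate the spectral sequence $(*)$ on $W$. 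The one "delicate point" you flag --- identifying ${\cal H}^i_{Z_1/Z_2}({\cal F})$ with $j_*{\cal H}^i_{Z_1\setminus Z_2}({\cal F}_{|W})$ --- is resolved in the paper not at the presheaf level but by invoking the spectral sequence $R^pj_*{\cal H}^q_{Z_1\setminus Z_2}({\cal F}_{|W})\Rightarrow{\cal H}^*_{Z_1/Z_2}({\cal F})$ of part (5) of the preceding proposition together with the vanishing of the higher direct images (established by the same affineness argument applied locally), which degenerates to precisely the isomorphism you want.
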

\begin{proof}
Let us recall the following properties :\\
\textbullet \indent If $Y$ is closed and ${\cal F}$ quasi-coherent, then ${\cal H}^i_Y({\cal F})$ is quasi-coherent for all $i$.\\
\textbullet \indent Under the same hypotheses, we have an isomorphism
\[
\lim\limits_{\to}{\cal E}xt^i_{{\cal O}_X}({\cal O}_X/{\cal I}_Y^{\otimes m},{\cal F})\to{\cal H}^i_{Y}({\cal F})
\]
\textbullet \indent If $({\cal F}_{\alpha})$ is a inverse system of abelian sheaves on $X$, we have an isomorphism
\[
H^i(X,\lim\limits_{\to}{\cal F}_{\alpha})=\lim\limits_{\to}H^i(X,{\cal F}_{\alpha})
\]
Proofs can be found respectively in \cite{SGA2} II.3, \cite{SGA2} II.6 and \cite{Ke2} 8.\\
\indent
Let ${\cal F}'={\cal F}_{|X\setminus Z_2}$. Since $Z_1\setminus Z_2$ is affine, the map
\[
{\cal E}xt^i_{{\cal O}_{X\setminus Z_2}}({\cal O}_{X\setminus Z_2}/{\cal I}_{Z_1\setminus Z_2}^{\otimes m},{\cal F}')\to {\cal E}xt^i_{{\cal O}_{X\setminus Z_2}}({\cal O}_{X\setminus Z_2}/{\cal I}_{Z_1\setminus Z_2}^{\otimes m+1},{\cal F}')
\]
is an inclusion, and it gives us a long exact sequence
\[
\begin{aligned}
\ldots&\to H^p(X\setminus Z_2,{\cal E}xt^i_{{\cal O}_{X\setminus Z_2}}({\cal O}_{X\setminus Z_2}/{\cal I}_{Z_1\setminus Z_2}^{\otimes m},{\cal F}'))\\
&\to H^p(X\setminus Z_2,{\cal E}xt^i_{{\cal O}_{X\setminus Z_2}}({\cal O}_{X\setminus Z_2}/{\cal I}_{Z_1\setminus Z_2}^{\otimes m+1},{\cal F}'))\to H^p(X\setminus Z_2,{\cal G}_m)\to\ldots
\end{aligned}
\]
where ${\cal G}_m$ denotes the quotient, which is quasi-coherent and annihilated by ${\cal I}_{Z_1\setminus Z_2}$, hence it can be seen as a ${\cal O}_{Z_1\setminus Z_2}$-module, and if $j:Z_1\setminus Z_2\to X\setminus Z_2$ is the inclusion, we have $j_*j^*{\cal G}_m={\cal G}_m$, and the spectral sequence
\[
H^p(X\setminus Z_2,R^qj_*j^*{\cal G}_m)\Rightarrow H^*(Z_1\setminus Z_2,j^*{\cal G}_m)
\]
degenerates, hence $H^p(Z_1\setminus Z_2,j^*{\cal G}_m)=H^p(X\setminus Z_2,{\cal G}_m)=0$ since $Z_1\setminus Z_2$ is affine and $j^*{\cal G}_m$ is quasi-coherent. Hence for $i>0$ we have surjections
\[
H^p(X\setminus Z_2,{\cal E}xt^i_{{\cal O}_{X\setminus Z_2}}({\cal O}_{X\setminus Z_2}/{\cal I}_{Z_1\setminus Z_2}^{\otimes m},{\cal F}'))\to H^p(X\setminus Z_2,{\cal E}xt^i_{{\cal O}_{X\setminus Z_2}}({\cal O}_{X\setminus Z_2}/{\cal I}_{Z_1\setminus Z_2}^{\otimes m+1},{\cal F}'))
\]
that give us $H^p(X\setminus Z_2,{\cal E}xt^i_{{\cal O}_{X\setminus Z_2}}({\cal O}_{X\setminus Z_2}/{\cal I}_{Z_1\setminus Z_2}^{\otimes m},{\cal F}'))=0$, since it is true for $m=0$. Then for $i>0$ we have
\[
\begin{aligned}
&H^p(X\setminus Z_2,{\cal H}^q_{Z_1\setminus Z_2}({\cal F}'))\\
=&H^p(X\setminus Z_2,\lim\limits_{\to}{\cal E}xt^i_{{\cal O}_{X\setminus Z_2}}({\cal O}_{X\setminus Z_2}/{\cal I}_{Z_1\setminus Z_2}^{\otimes m},{\cal F}'))\\
=&\lim\limits_{\to}H^p(X\setminus Z_2,{\cal E}xt^i_{{\cal O}_{X\setminus Z_2}}({\cal O}_{X\setminus Z_2}/{\cal I}_{Z_1\setminus Z_2}^{\otimes m},{\cal F}'))=0
\end{aligned}
\]
Hence
\[
H^i_{Z_1\setminus Z_2}(X\setminus Z_2,{\cal F}')=H^0(X\setminus Z_2,{\cal H}^i_{Z_1\setminus Z_2}({\cal F}'))
\]
for $i>0$. If $\iota:X\setminus Z_2\to X$ is the inclusion, for the same reasons as before, we have $R^p\iota_*{\cal H}^q_{Z_1\setminus Z_2}({\cal F}')$ for all $p>0$, hence the spectral sequence
\[
R^p\iota_*{\cal H}^q_{Z_1\setminus Z_2}({\cal F}')\Rightarrow {\cal H}^*_{Z_1/Z_2}({\cal F})
\]
degenerates, and we get
\[
\begin{aligned}
H^i_{Z_1/Z_2}(X,{\cal F})&=H^i_{Z_1\setminus Z_2}(X\setminus Z_2,{\cal F}')\\
&=H^0(X\setminus Z_2,{\cal H}^i_{Z_1\setminus Z_2}({\cal F}'))\\
&=H^0(X,\iota_*{\cal H}^i_{Z_1\setminus Z_2}({\cal F}'))\\
&=H^0(X,{\cal H}^i_{Z_1/Z_2}({\cal F}))
\end{aligned}
\]
\end{proof}
Thanks to the excision lemma and to the spectral sequence $(*)$, it is easy to show that
\begin{proposition} Let $X$ be a Cohen-Macaulay variety (ie. all local rings ${\cal O}_{X,x}$ have dimension equal to their depth), and ${\cal F}$ a locally free sheaf on $X$.
\item[(1)] If $Z\subset X$ is closed, then ${\cal H}^i_Z({\cal F})=0$ for $i<\textup{codim}(Z)$. Moreover, if ${\cal I}_Z$ is locally spanned by $\textup{codim}(Z)$ elements, then ${\cal H}^i_Z({\cal F})=0$ for $i>\textup{codim}(Z)$.
\item[(2)] If $Z\subset X$ is locally closed, then $H^i_Z(X,{\cal F})=0$ for $i<\textup{codim}(Z)$. Moreover, if $V$ is open and containing $Z$ as a closed subset, if $Z$ is affine, and if ${\cal I}_Z$ is locally spanned by $\textup{codim}(Z)$ elements, then $H^i_Z(X,{\cal F})=0$ for $i>\textup{codim}(Z)$.
\end{proposition}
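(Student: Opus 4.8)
The plan is to prove (1) stalkwise, as a statement in local algebra, and then to feed it into the global machinery (the spectral sequence $(*)$, the excision lemma, and the preceding degeneration proposition) to obtain (2).

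For (1), since the vanishing of ${\cal H}^i_Z({\cal F})$ is local, I would check it on stalks. At a point $x$ the stalk $({\cal H}^i_Z({\cal F}))_x$ is the local cohomology module $H^i_{I_{Z,x}}({\cal F}_x)$, where ${\cal F}_x$ is a free ${\cal O}_{X,x}$-module. Because $X$ is Cohen-Macaulay, ${\cal O}_{X,x}$ is a Cohen-Macaulay local ring and ${\cal F}_x$ a maximal Cohen-Macaulay module, so the grade of $I_{Z,x}$ on ${\cal F}_x$ equals the height of $I_{Z,x}$, that is the codimension of $Z$ at $x$; as local cohomology vanishes strictly below the grade, this gives ${\cal H}^i_Z({\cal F})=0$ for $i<\textup{codim}(Z)$. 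When moreover ${\cal I}_Z$ is locally generated by $\textup{codim}(Z)$ elements $f_1,\ldots,f_c$, I would recompute the same stalk by the {\v C}ech complex attached to $f_1,\ldots,f_c$; this complex sits in degrees $0,\ldots,c$, so $H^i_{I_{Z,x}}({\cal F}_x)=0$ for $i>c$. This is the only place where the Cohen-Macaulay hypothesis is really used.

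For the lower vanishing in (2), the excision lemma lets me compute $H^i_Z(X,{\cal F})$ on any open set $V$ in which $Z$ is closed, independently of the choice of $V$. On such a $V$ the spectral sequence $(*)$ reads $H^p(V,{\cal H}^q_Z({\cal F}_{|V}))\Rightarrow H^{p+q}_Z(V,{\cal F}_{|V})$, and part (1), applied on the Cohen-Macaulay open set $V$ with the locally free ${\cal F}_{|V}$, kills every row $q<\textup{codim}(Z)$; hence the abutment vanishes in total degree $<\textup{codim}(Z)$. For the upper vanishing I would work on the given open $V$ in which $Z$ is closed. Since $Z$ is affine, taking $Z_1=Z$ and $Z_2=\emptyset$ the difference $Z_1\setminus Z_2=Z$ is affine, so the preceding proposition makes $(*)$ degenerate and gives $H^i_Z(V,{\cal F}_{|V})=H^0(V,{\cal H}^i_Z({\cal F}_{|V}))$. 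Because ${\cal I}_Z$ is locally generated by $\textup{codim}(Z)$ elements, part (1) gives ${\cal H}^i_Z=0$ for $i>\textup{codim}(Z)$, whence $H^i_Z(X,{\cal F})=0$ for $i>\textup{codim}(Z)$.

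The only genuine content is the local-algebra input in (1): over a Cohen-Macaulay local ring the grade of $I_{Z,x}$ on a free module coincides with the codimension of $Z$, so that local cohomology begins exactly in degree $\textup{codim}(Z)$. Once this is established uniformly over $X$, everything else is the bookkeeping announced by the author — excision to make the locally closed statement well posed and to pass to an open $V$ where $Z$ is closed, the spectral sequence $(*)$ for the lower bound, and the degeneration proposition (valid because $Z$ is affine) for the upper bound. The main obstacle is therefore purely the grade-equals-codimension computation, together with checking that the reduction to $V$ is compatible with the hypotheses that $Z$ be affine and that ${\cal I}_Z$ be locally generated by $\textup{codim}(Z)$ elements.
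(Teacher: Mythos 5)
Your proof is correct and follows exactly the route the paper indicates (the paper offers no written argument beyond the hint ``excision lemma and spectral sequence $(*)$''): a stalkwise reduction of (1) to Grothendieck's depth-sensitivity of local cohomology together with $\mathrm{grade}(I_{Z,x})=\mathrm{ht}(I_{Z,x})$ over a Cohen-Macaulay local ring and the \v{C}ech bound for the upper vanishing, then excision, the spectral sequence $(*)$, and the preceding degeneration proposition for (2). One cosmetic point: the sentence ``this is the only place where the Cohen-Macaulay hypothesis is really used'' is attached to the \v{C}ech-complex bound, which needs no Cohen-Macaulay hypothesis at all; as your closing paragraph correctly states, that hypothesis enters only in the grade-equals-height identity underlying the lower vanishing.
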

\subsection{Cousin complexes}
If $Z_4\subset Z_3\subset Z_2\subset Z_1$ are four closed subsets of $X$, we have different long exact sequences that give us the following diagram
\begin{center}
\begin{changemargin}{-1cm}{0cm}
\begin{tikzcd}
{}&&&\vdots \arrow{d}	&\udots \arrow{ld} & \\
	&	&	&H^{j-1}_{Z_1/Z_2}(X,{\cal F}) \arrow{ld} \arrow{d}	& & \\
	&\ldots \arrow{r}	&H^j_{Z_2/Z_4}(X,{\cal F}) \arrow{ld} \arrow{r}
								&H^j_{Z_2/Z_3}(X,{\cal F}) \arrow{d} \arrow{r}
												&H^{j+1}_{Z_3/Z_4}(X,{\cal F}) \arrow{r} &\ldots \\
	&H^j_{Z_1/Z_4}(X,{\cal F}) \arrow{ld}	&	&H^j_{Z_1/Z_3}(X,{\cal F}) \arrow{d}	& & \\
\udots &		&	&\vdots	& &\\
\end{tikzcd}
\end{changemargin}
\end{center}
Hence we have morphisms
\[
H^{j-1}_{Z_1/Z_2}(X,{\cal F})\overset{d}\to H^j_{Z_2/Z_3}(X,{\cal F})\overset{d'}\to H^{j+1}_{Z_3/Z_4}(X,{\cal F})
\]
The idea of Cousin complexes is to generalize this to a filtration 
\[
\{Z\}=(\emptyset=Z_{n+1}\subset Z_n\subset\ldots\subset Z_1\subset Z_0=X)
\]
of $X$. We obtain complexes
\[
0\to H^0(X,{\cal F})\to H^0_{Z_0/Z_1}(X,{\cal F})\to H^1_{Z_1/Z_2}(X,{\cal F})\to \ldots \to H^{n}_{Z_n}(X,{\cal F})\to 0
\]
and
\[
0\to {\cal F}\to {\cal H}^0_{Z_0/Z_1}({\cal F})\to {\cal H}^1_{Z_1/Z_2}({\cal F})\to \ldots \to {\cal H}^{n}_{Z_n}({\cal F})\to 0
\]
which will be respectively denoted by $\textup{Cousin}_{\{Z\}}{\cal F}$ and $\underline{{\cal C}\textup{ousin}}_{\{Z\}}{\cal F}$. In \cite{Ke1}, Kempf showed that
\begin{proposition}
Assume that
\item[(a)] the local Cousin complex $\underline{{\cal C}\textup{ousin}}_{\{Z\}}{\cal F}$ is exact ;
\item[(b)] for all $i$, $Z_i\setminus Z_{i+1}$ is affine.\\
\indent
Then :
\item[(1)] $\textup{Cousin}_{\{Z\}}{\cal F}$ is the complex of global sections of $\underline{{\cal C}\textup{ousin}}_{\{Z\}}{\cal F}$ ;
\item[(2)] the $i$-th homology group of $\textup{Cousin}_{\{Z\}}{\cal F}$ is isomorphic to $H^i(X,{\cal F})$.
In particular, if $H^i(X,{\cal F})=0$ for all $i>0$, the complex $\textup{Cousin}_{\{Z\}}{\cal F}$ is a resolution of $H^0(X,{\cal F})$.
\end{proposition}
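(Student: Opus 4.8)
The plan is to read both conclusions off the hypercohomology spectral sequence attached to the resolution supplied by hypothesis (a). By (a) the augmented complex
\[
0\to{\cal F}\to{\cal H}^0_{Z_0/Z_1}({\cal F})\to{\cal H}^1_{Z_1/Z_2}({\cal F})\to\ldots\to{\cal H}^n_{Z_n}({\cal F})\to 0
\]
is exact, so $\underline{{\cal C}\textup{ousin}}_{\{Z\}}{\cal F}$ is a resolution of ${\cal F}$ by the quasi-coherent sheaves ${\cal C}^p:={\cal H}^p_{Z_p/Z_{p+1}}({\cal F})$, concentrated in degrees $0,\ldots,n$. I would then compute $H^\bullet(X,{\cal F})$ from this resolution through the hypercohomology spectral sequence
\[
E_1^{p,q}=H^q(X,{\cal C}^p)\Longrightarrow H^{p+q}(X,{\cal F}),
\]
whose first differential $d_1\colon E_1^{p,q}\to E_1^{p+1,q}$ is induced by the differential of $\underline{{\cal C}\textup{ousin}}_{\{Z\}}{\cal F}$.

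The decisive point is that only the bottom row survives, that is $E_1^{p,q}=H^q(X,{\cal H}^p_{Z_p/Z_{p+1}}({\cal F}))=0$ for $q>0$, and this is where hypothesis (b) enters. Write $\iota\colon X\setminus Z_{p+1}\to X$ for the inclusion and ${\cal F}'={\cal F}_{|X\setminus Z_{p+1}}$. Since $Z_p\setminus Z_{p+1}$ is affine, the argument proving the previous proposition gives both $R^b\iota_*{\cal H}^p_{Z_p\setminus Z_{p+1}}({\cal F}')=0$ for $b>0$ --- so that the spectral sequence of property (5) degenerates to ${\cal H}^p_{Z_p/Z_{p+1}}({\cal F})=\iota_*{\cal H}^p_{Z_p\setminus Z_{p+1}}({\cal F}')$ --- and $H^a(X\setminus Z_{p+1},{\cal H}^p_{Z_p\setminus Z_{p+1}}({\cal F}'))=0$ for $a>0$. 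The Leray spectral sequence for $\iota$ combines these into the claimed vanishing $H^q(X,{\cal H}^p_{Z_p/Z_{p+1}}({\cal F}))=0$ for $q>0$. With the higher rows gone the spectral sequence degenerates at $E_2$, giving $H^p(X,{\cal F})\cong E_2^{p,0}$, the $p$-th cohomology of the bottom row $(E_1^{\bullet,0},d_1)$.

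It then remains to identify $(E_1^{\bullet,0},d_1)$ with $\textup{Cousin}_{\{Z\}}{\cal F}$. At the level of objects this is immediate from the previous proposition: $E_1^{p,0}=H^0(X,{\cal H}^p_{Z_p/Z_{p+1}}({\cal F}))=H^p_{Z_p/Z_{p+1}}(X,{\cal F})$, and the augmentation contributes $H^0(X,{\cal F})$, which are precisely the terms of the global Cousin complex. Granting that $d_1$ is the Cousin differential, this is (1); the degeneration then yields (2), namely $H^p(X,{\cal F})\cong H^p\big(\textup{Cousin}_{\{Z\}}{\cal F}\big)$. The final clause is formal: if $H^i(X,{\cal F})=0$ for $i>0$, then $\textup{Cousin}_{\{Z\}}{\cal F}$ has cohomology only in degree $0$, equal to $H^0(X,{\cal F})$, so together with its augmentation it is a resolution of $H^0(X,{\cal F})$.

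The step I expect to be the main obstacle is the functorial matching in (1): verifying that $d_1$ coincides with the Cousin differential, i.e. with the composite of the connecting maps attached to the triples $Z_{p+2}\subset Z_{p+1}\subset Z_p$. Both arrows are assembled from the same boundary homomorphisms, so the issue is to see that applying $H^0(X,-)$ to the sheaf-level connecting maps reproduces the global ones. This is exactly where the vanishing $H^{>0}(X,{\cal H}^p_{Z_p/Z_{p+1}}({\cal F}))=0$ is needed a second time, since it forces the global long exact sequences of the triples to be the global sections of their sheaf-level counterparts; once this compatibility is secured, (1) and hence (2) follow as above.
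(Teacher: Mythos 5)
Your argument is correct and is essentially the canonical one: the paper states this proposition without proof (citing Kempf), but your route --- extracting from the proof of the preceding proposition the acyclicity $H^q(X,{\cal H}^p_{Z_p/Z_{p+1}}({\cal F}))=0$ for $q>0$ via $R^b\iota_*$-vanishing and the affineness of $Z_p\setminus Z_{p+1}$, then running the hypercohomology spectral sequence of the resolution furnished by (a) --- is precisely the mechanism the paper itself deploys for the restricted Cousin complexes in Theorem \ref{cousinrestreintlocalglobal}. You are also right to flag the identification of $d_1$ with the composite connecting maps as the only delicate point; it is settled by the naturality of the spectral sequence $H^a(X,{\cal H}^b_{Z_1/Z_2}({\cal F}))\Rightarrow H^{a+b}_{Z_1/Z_2}(X,{\cal F})$ with respect to the triples $Z_{p+2}\subset Z_{p+1}\subset Z_p$, exactly as you indicate.
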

Moreover, he gave the following equivalent conditions to (a) :
\begin{itemize}
\item[(A)] ${\cal H}^i_{Z_p}({\cal F})=0$ when $p\neq i$ ;
\item[(A1)] ${\cal H}^i_{Z_p}({\cal F})=0$ when $p>i$ ;
\item[(A2)] ${\cal H}^i_{Z_p/Z_{p+1}}({\cal F})=0$ when $p\neq i$ 
\end{itemize}
and when $\{Z\}$ is a filtration of $X$ satisfying (b) and such that for all $i$, $\textrm{codim}(Z_i)=i$, (A) is satisfied for all locally free sheaves.\\
\indent
Now assume we have a filtration $\{Z\}$ of $X$ satisfying $(b)$ such that for all $i$, $\textrm{codim}(Z_i)=i$, and let ${\cal F}$ be a quasi-coherent sheaf on $X$.
\begin{definition}
We define the $i$-th restricted Cousin complex ${\cal F}$ relatively to $\{Z\}$ as the complex
\[
0\to H^i_{Z_i}(X,{\cal F})\to H^i_{Z_i/Z_{i+1}}(X,{\cal F})\to H^{i+1}_{Z_{i+1}/Z_{i+2}}(X,{\cal F})\to \ldots\to H^n_{Z_n}(X,{\cal F})\to 0
\]
and it will be denoted as $\textup{Cousin}_{\{Z\},i}{\cal F}$. Equivalently, we define the $i$-th restricted local Cousin complex of ${\cal F}$ relatively to $\{Z\}$ as
\[
0\to {\cal H}^i_{Z_i}({\cal F})\to {\cal H}^i_{Z_i/Z_{i+1}}({\cal F})\to {\cal H}^{i+1}_{Z_{i+1}/Z_{i+2}}({\cal F})\to \ldots\to {\cal H}^n_{Z_n}({\cal F})\to 0
\]
and it will be denoted as $\underline{{\cal C}\textup{ousin}}_{\{Z\},i}{\cal F}$
\end{definition}
They are actually complexes since the diagram
\begin{center}
\begin{tikzcd}
{}				&											& H^i_{Z_i}(X,{\cal F}) \arrow{d} \arrow{ld}	&													&		\\
\ldots \arrow{r} & H^i_{Z_i/Z_{i+2}}(X,{\cal F}) \arrow{r}	& H^i_{Z_i/Z_{i+1}}(X,{\cal F}) \arrow{r}	& H^{i+1}_{Z_{i+1}/Z_{i+2}}(X,{\cal F}) \arrow{r}	&\ldots
\end{tikzcd}
\end{center}
commutes, and since bottom line is exact. The $i$-th restricted Cousin complex allows us to compute the $H^j(X,{\cal H}^i_{Z_i}{\cal F})$ :
\begin{theoreme}\label{cousinrestreintlocalglobal}
Let $\{Z\}$ be a filtration of $X$ satisfying $(C)$, and let ${\cal F}$ be a quasi-coherent sheaf satisfying $(a)$. Then
\item[(1)] we have an isomorphism $\textup{Cousin}_{\{Z\},i}{\cal F}\to H^0(X,\underline{{\cal C}\textup{ousin}}_{\{Z\},i}{\cal F})$ ;
\item[(2)] le $j$-th homology group of $\textup{Cousin}_{\{Z\},i}{\cal F}$ is isomorphic to $H^j(X,{\cal H}^i_{Z_i}{\cal F})$. In particular, when the $H^j(X,{\cal H}^i_{Z_i}{\cal F})$ for $j>0$ are all zero, the complex $\textup{Cousin}_{\{Z\},i}{\cal F}$ is a resolution of $H^i(X,{\cal F})$.
\end{theoreme}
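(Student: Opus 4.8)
The plan is to exhibit the local restricted Cousin complex $\underline{{\cal C}\textup{ousin}}_{\{Z\},i}{\cal F}$ as a $\Gamma(X,-)$-acyclic resolution of ${\cal H}^i_{Z_i}({\cal F})$; once this is done, statement (1) is the assertion that taking global sections termwise reproduces $\textup{Cousin}_{\{Z\},i}{\cal F}$, and statement (2) is the standard fact that the cohomology of the global sections of an acyclic resolution computes the derived functors of $\Gamma(X,-)$.

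First I would show that $\underline{{\cal C}\textup{ousin}}_{\{Z\},i}{\cal F}$ is exact. Because ${\cal F}$ satisfies $(a)$, equivalently $(A)$, we have ${\cal H}^q_{Z_p}({\cal F})=0$ for $q\neq p$, so for every $p\geq i$ the local cohomology long exact sequence of the pair $Z_{p+1}\subset Z_p$ (Proposition, part (1)) collapses to a short exact sequence
\[
0\to {\cal H}^p_{Z_p}({\cal F})\xrightarrow{\iota_p}{\cal H}^p_{Z_p/Z_{p+1}}({\cal F})\xrightarrow{\pi_p}{\cal H}^{p+1}_{Z_{p+1}}({\cal F})\to 0 .
\]
Since the differential of the Cousin complex is the composite $\iota_{p+1}\circ\pi_p$, a direct diagram chase gives exactness everywhere: at the term of index $p>i$ the incoming image is $\iota_p({\cal H}^p_{Z_p}({\cal F}))=\ker\pi_p$ (using surjectivity of $\pi_{p-1}$), which coincides with $\ker(\iota_{p+1}\circ\pi_p)=\ker\pi_p$; exactness at the first term is the injectivity of $\iota_i$; and surjectivity onto the last term ${\cal H}^n_{Z_n}({\cal F})={\cal H}^n_{Z_n/Z_{n+1}}({\cal F})$ follows from the surjectivity of $\pi_{n-1}$ together with $\iota_n=\mathrm{id}$.

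Next I would establish the acyclicity of the terms and identify their global sections. For each $p\geq i$ the stratum $Z_p\setminus Z_{p+1}$ is affine by $(b)$, so the preceding proposition (and the vanishing $H^{>0}(X,{\cal H}^\bullet_{Z_p/Z_{p+1}}({\cal F}))=0$ obtained in its proof) gives simultaneously $H^{>0}(X,{\cal H}^p_{Z_p/Z_{p+1}}({\cal F}))=0$ and $H^0(X,{\cal H}^p_{Z_p/Z_{p+1}}({\cal F}))=H^p_{Z_p/Z_{p+1}}(X,{\cal F})$. For the left-hand object ${\cal H}^i_{Z_i}({\cal F})$ I would instead use that, by $(A)$, the spectral sequence $(*)$ for $Z_i$ has a single nonzero row $q=i$ and hence degenerates, yielding $H^j(X,{\cal H}^i_{Z_i}({\cal F}))=H^{i+j}_{Z_i}(X,{\cal F})$ for all $j$, in particular $H^0(X,{\cal H}^i_{Z_i}({\cal F}))=H^i_{Z_i}(X,{\cal F})$. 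Applying $\Gamma(X,-)$ termwise to the resolution and inserting these identifications reproduces $\textup{Cousin}_{\{Z\},i}{\cal F}$ verbatim, the differentials matching by functoriality of the connecting maps; this is (1). For (2), the complex $\underline{{\cal C}\textup{ousin}}_{\{Z\},i}{\cal F}$ being a $\Gamma(X,-)$-acyclic resolution of ${\cal H}^i_{Z_i}({\cal F})$, the $j$-th cohomology of its global sections is $R^j\Gamma(X,{\cal H}^i_{Z_i}({\cal F}))=H^j(X,{\cal H}^i_{Z_i}({\cal F}))$, the augmentation term $H^i_{Z_i}(X,{\cal F})$ being the value in degree $j=0$. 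When $H^j(X,{\cal H}^i_{Z_i}({\cal F}))=0$ for $j>0$ the homology is concentrated in degree $0$, so $\textup{Cousin}_{\{Z\},i}{\cal F}$ is a resolution of $H^0(X,{\cal H}^i_{Z_i}({\cal F}))=H^i_{Z_i}(X,{\cal F})$, which is the asserted $H^i(X,{\cal F})$.

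I expect the main obstacle to be securing the two geometric inputs in exactly the form needed, rather than the homological bookkeeping. The collapse to the short exact sequences above depends entirely on having $(a)$ in the sharp shape ${\cal H}^q_{Z_p}({\cal F})=0$ for all $q\neq p$ (both the codimension vanishing and the complete-intersection vanishing from $(C)$ feed into this), and the acyclicity of the relative terms is exactly the content of the earlier proposition, which is why the affineness hypothesis $(b)$ on each stratum is indispensable. The one genuinely delicate point is the final identification of the degree-zero homology $H^i_{Z_i}(X,{\cal F})$ with $H^i(X,{\cal F})$: a priori it is only $H^0(X,{\cal H}^i_{Z_i}({\cal F}))$, so I would check that, under $(C)$ and $(a)$, the augmentation coming from the left-hand end of the full Cousin complex identifies this term with $H^i(X,{\cal F})$.
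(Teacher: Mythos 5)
Your proof is correct and follows essentially the same route as the paper: you splice the short exact sequences $0\to{\cal H}^p_{Z_p}({\cal F})\to{\cal H}^p_{Z_p/Z_{p+1}}({\cal F})\to{\cal H}^{p+1}_{Z_{p+1}}({\cal F})\to 0$ furnished by condition $(A)$ to exhibit $\underline{{\cal C}\textup{ousin}}_{\{Z\},i}{\cal F}$ as a $\Gamma(X,-)$-acyclic resolution of ${\cal H}^i_{Z_i}({\cal F})$, and you identify the global sections of its terms via the affineness of the strata and the degenerating spectral sequences, exactly as the paper does. The subtlety you flag at the end --- that the degree-zero homology is a priori $H^0(X,{\cal H}^i_{Z_i}({\cal F}))=H^i_{Z_i}(X,{\cal F})$ rather than $H^i(X,{\cal F})$ --- is genuine and is left implicit in the paper's own proof, so your instinct to verify that identification separately is sound.
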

\begin{proof}
Let us recall that for all $Z_2\subset Z_1$, $W_2\subset W_1$, and $S_1:=Z_1\cap W_1$, $S_2=(W_1\cap Z_2)\cup(W_2\cap Z_1)$, we have a spectral sequence
\[
H^p_{W_1/W_2}(X,{\cal H}^q_{Z_1/Z_2}({\cal F}))\Rightarrow H^*_{S_1/S_2}(X,{\cal F})
\]
Let ${\cal F}$ satisfying the equivalent conditions (a),(A),(A1) and (A2).\\
\indent
Let $k\geq i$. Since (A2), we have ${\cal H}^{j}_{Z_{k}/Z_{k+1}}({\cal F})=0$ for $j\neq k$. The spectral sequence
\[
H^p_{Z_i}(X,{\cal H}^q_{Z_k/Z_{k+1}}({\cal F}))\Rightarrow H^*_{Z_k/Z_{k+1}}(X,{\cal F})
\]
degenerates and gives us $H^k_{Z_k/Z_{k+1}}(X,{\cal F})=H^0_{Z_i}(X,{\cal H}^k_{Z_k/Z_{k+1}}({\cal F}))$. Moreover, (A1) gives us ${\cal H}^p_{Z_i}({\cal F})=0$ for $p<i$, hence we get with the spectral sequence
\[
H^p_{Z_i}(X,{\cal H}^q_{Z_i}({\cal F}))\Rightarrow H^*_{Z_i}(X,{\cal F})
\]
an isomorphism $H^i_{Z_i}(X,{\cal F})=H^0_{Z_i}(X,{\cal H}^i_{Z_i}({\cal F}))$, which proves (1).\\
\indent
To prove (2), let us start to show that $\underline{{\cal C}\textup{ousin}}_{\{Z\},i}{\cal F}$ is a resolution of ${\cal H}^i_{Z_i}({\cal F})$. Let $p\geq i$. Since ${\cal F}$ satisfies (A), we have
\[
{\cal H}^p_{Z_{p+1}}({\cal F})={\cal H}^{p+1}_{Z_p}({\cal F})=0
\]
and we obtain a short exact sequence
\[
0\to {\cal H}^p_{Z_p}({\cal F})\to {\cal H}^p_{Z_p/Z_{p+1}}({\cal F})\to {\cal H}^{p+1}_{Z_{p+1}}({\cal F})\to 0
\]
Since it is true for all $p\geq i$, we get that $\underline{{\cal C}\textup{ousin}}_{\{Z\},i}{\cal F}$ is a resolution of ${\cal H}^i_{Z_i}({\cal F})$.
Moreover, since $Z_k\setminus Z_{k+1}$ is affine, $H^j(X,{\cal H}_{Z_k/Z_{k+1}}({\cal F}))=0$ for $j>0$, hence $\underline{{\cal C}\textup{ousin}}_{\{Z\},i}{\cal F}$ is an acyclic resolution of ${\cal H}^i_{Z_i}({\cal F})$. This proves (2).
\end{proof}
When $X=G/P$ is a flag variety of dimension $n$, and if $B\subset P$ is a Borel subgroup, define for all $w\in W^P$, the Bruhat cell $X_w=BwP/P$, which is of dimension $l(w)$. Take $Z_i$ to be the union of all Bruhat cells of codimension at least $i$, then $Z_i\setminus Z_{i+1}$ is affine, and the complex $\textup{Cousin}_{\{Z\}}{\cal F}$ is a resolution of $H^0(X,{\cal F})$. When ${\cal F}={\cal L}_{\lambda}$ is an invertible sheaf with nonzero global sections, this resolution is actually dual to the BGG resolution of the irreducible highest weight module $V_{\lambda}$. If $X_w$ is a Bruhat cell in $X$, we will use the same idea to construct Cousin complexes $\textup{Cousin}_{\{Z\},w}{\cal F}$ and $\underline{{\cal C}\textup{ousin}}_{\{Z\},w}{\cal F}$ by restricting our filtration to $\overline{X_w}$, and we will look how close $\textup{Cousin}_{\{Z\},w}{\cal F}$ is to be a resolution of $H^i_{\overline{X_w}}(X,{\cal F})$.\\
\indent
Since $Z_i\setminus Z_{i+1}$ is the disjoint union of all $B$-orbits of codimension $i$, we have
\[
\begin{array}{rcl}
H^i_{Z_i/Z_{i+1}}(X,{\cal F})&=&H^i_{Z_i\setminus Z_{i+1}}(X\setminus Z_{i+1},{\cal F})\\
&=&\bigoplus\limits_{l(w)=n-i}H^i_{X_w}(X\setminus Z_{i+1},{\cal F})\\
&=&\bigoplus\limits_{l(w)=n-i}H^i_{X_w}(X,{\cal F})\\
\end{array}
\]
by excision lemma. Hence morphisms in $\textup{Cousin}_{\{Z\}}{\cal F}$ are 
\[
d^i:\bigoplus\limits_{l(w)=n-i}H^i_{X_w}(X,{\cal F})\to\bigoplus\limits_{l(w')=n-i-1}H^{i+1}_{X_{w'}}(X,{\cal F})
\]
and the morphism $H^i_{X_w}(X,{\cal F})\to H^{i+1}_{X_{w'}}(X,{\cal F})$ is zero when $X_{w'}\not\subset\overline{X_w}$. Let $X_w$ be a Bruhat cell of codimension $i$ in $X$. Then we have
\[
\bigoplus\limits_{\substack{l(w')=n-i-j \\ X_{w'}\subset \overline{X_w}}}H^{i+j}_{X_{w'}}(X,{\cal F})= H^{i+j}_{\overline{X_w}\cap Z_{i+j}/\overline{X_w}\cap Z_{i+j+1}}(X,{\cal F})
\]
and we have two complexes
\[
0\to H^i_{\overline{X_w}}(X,{\cal F})\overset{d_0}\to H^i_{X_w}(X,{\cal F})\overset{d_1}\to\bigoplus\limits_{\substack{l(w')=n-i-1 \\ X_{w'}\subset \overline{X_w}}}H^{i+1}_{X_{w'}}(X,{\cal F})\overset{d_2}\to\ldots
\]
and
\[
0\to {\cal H}^i_{\overline{X_w}}({\cal F})\overset{d_0}\to {\cal H}^i_{X_w}({\cal F})\overset{d_1}\to\bigoplus\limits_{\substack{l(w')=n-i-1 \\ X_{w'}\subset \overline{X_w}}}{\cal H}^{i+1}_{X_{w'}}({\cal F})\overset{d_2}\to\ldots
\]
which will be respectively denoted by $\textup{Cousin}_{\{Z\},w}{\cal F}$ and $\underline{{\cal C}\textup{ousin}}_{\{Z\},w}{\cal F}$.
\begin{theoreme}
Let ${\cal F}$ be a locally free sheaf on $X$, $i>0$ and $X_w$ a Bruhat cell of codimension $i$. Then
\item[(1)] we have an isomorphism $\textup{Cousin}_{\{Z\},w}{\cal F}\to H^0_{\overline{X_w}}(X,\underline{{\cal C}\textup{ousin}}_{\{Z\},w}{\cal F})$ ;
\item[(2)] the $j$-th homology group of the complex $\textup{Cousin}_{\{Z\},w}{\cal F}$ is isomorphic to $H^j_{\overline{X_w}}(X,{\cal H}^i_{\overline{X_w}}{\cal F})$.
\end{theoreme}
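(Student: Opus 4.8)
The plan is to mimic the proof of Theorem~\ref{cousinrestreintlocalglobal}, replacing the filtration $\{Z\}$ by its trace on $\overline{X_w}$ and the functor $\Gamma(X,-)$ by $\Gamma_{\overline{X_w}}(X,-)$. Write $W_p=\overline{X_w}\cap Z_p$ for $p\geq i$; then $\operatorname{codim}_X W_p=p$, $W_i=\overline{X_w}$, and $W_p\setminus W_{p+1}$ is the disjoint union of the affine Bruhat cells $X_{w'}\subset\overline{X_w}$ with $l(w')=n-p$. Using the isomorphism $H^\bullet_{Z_1/Z_2}\simeq H^\bullet_{Z_1\setminus Z_2}(X\setminus Z_2,-)$, the decomposition over disjoint closed pieces and the excision lemma (parts (2), (6), (3) of the local cohomology proposition), one identifies the terms of $\textup{Cousin}_{\{Z\},w}{\cal F}$ (resp. of $\underline{{\cal C}\textup{ousin}}_{\{Z\},w}{\cal F}$) with $H^p_{W_p/W_{p+1}}(X,{\cal F})$ (resp. ${\cal H}^p_{W_p/W_{p+1}}({\cal F})$), so that these two complexes are exactly the restricted Cousin complexes of the induced filtration $\{W\}$ of $\overline{X_w}$.

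Two vanishing statements drive everything. First, each $X_{w'}$ is smooth of codimension $p=n-l(w')$ in the smooth variety $X$ and ${\cal F}$ is locally free, so local cohomology along $X_{w'}$ is pure: ${\cal H}^q_{X_{w'}}({\cal F})=0$ for $q\neq p$. Summing over the cells of $W_p\setminus W_{p+1}$ yields condition (A2) for $\{W\}$, i.e. ${\cal H}^q_{W_p/W_{p+1}}({\cal F})=0$ for $q\neq p$. Second, I need the absolute upper vanishing ${\cal H}^q_{W_p}({\cal F})=0$ for $q>p$; combined with the automatic lower vanishing ${\cal H}^q_{W_p}({\cal F})=0$ for $q<p$ (here $X$ is smooth and $\operatorname{codim}W_p=p$, part (1) of the Cohen--Macaulay proposition) this is precisely condition (A) for $\{W\}$.

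Granting (A), the argument of Theorem~\ref{cousinrestreintlocalglobal}(2) applies verbatim: from ${\cal H}^p_{W_{p+1}}({\cal F})={\cal H}^{p+1}_{W_p}({\cal F})=0$ one extracts the short exact sequences
\[
0\to{\cal H}^p_{W_p}({\cal F})\to{\cal H}^p_{W_p/W_{p+1}}({\cal F})\to{\cal H}^{p+1}_{W_{p+1}}({\cal F})\to 0,
\]
and splicing them over $p\geq i$ shows that $\underline{{\cal C}\textup{ousin}}_{\{Z\},w}{\cal F}$ is a resolution of ${\cal H}^i_{\overline{X_w}}({\cal F})={\cal H}^i_{W_i}({\cal F})$. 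To compute with $\Gamma_{\overline{X_w}}(X,-)$ I use the spectral sequence of part (7) with $W_1=\overline{X_w}$, $W_2=\emptyset$, $Z_1=W_p$, $Z_2=W_{p+1}$, namely $H^a_{\overline{X_w}}(X,{\cal H}^b_{W_p/W_{p+1}}({\cal F}))\Rightarrow H^*_{W_p/W_{p+1}}(X,{\cal F})$; by (A2) it degenerates into $H^a_{\overline{X_w}}(X,{\cal H}^p_{W_p/W_{p+1}}({\cal F}))\simeq H^{a+p}_{W_p\setminus W_{p+1}}(X\setminus W_{p+1},{\cal F})$, which vanishes for $a>0$ since $W_p\setminus W_{p+1}$ is affine of codimension $p$ with ideal locally generated by $p$ elements (part (2) of the Cohen--Macaulay proposition). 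Thus every term of $\underline{{\cal C}\textup{ousin}}_{\{Z\},w}{\cal F}$ is $\Gamma_{\overline{X_w}}(X,-)$-acyclic, and taking $a=0$ gives the termwise isomorphisms of (1), compatibly with the differentials (the leftmost term being handled by the analogous degeneration with $Z_1=\overline{X_w}$, $Z_2=\emptyset$, which uses (A)). Statement (2) then follows because the homology of the global sections of an acyclic resolution computes $R^j\Gamma_{\overline{X_w}}(X,-)=H^j_{\overline{X_w}}(X,-)$ applied to ${\cal H}^i_{\overline{X_w}}({\cal F})$.

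The hard part is the upper vanishing ${\cal H}^q_{W_p}({\cal F})=0$ for $q>p$. This is genuinely not formal from (A2): a diagram chase in the long exact sequences reduces exactness of $\underline{{\cal C}\textup{ousin}}_{\{Z\},w}{\cal F}$ at the spot $p$ to the vanishing of the natural map ${\cal H}^p_{W_p}({\cal F})\to{\cal H}^p_{W_{p-1}}({\cal F})$, hence again to an upper vanishing, and it does fail for general reducible supports. It holds in our situation because $\overline{X_w}$ and the skeleta $W_p$ are Cohen--Macaulay, so that the local cohomology of the locally free sheaf ${\cal F}$ along them is concentrated in the codimension degree. This Cohen--Macaulayness of Schubert varieties (and of the unions $W_p$) is the geometric input on which the whole theorem rests.
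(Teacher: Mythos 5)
Your part (1), and the preliminary identifications, are sound and match the paper: writing $W_p=\overline{X_w}\cap Z_p$, the terms of the two restricted complexes are indeed $H^p_{W_p/W_{p+1}}(X,{\cal F})$ and ${\cal H}^p_{W_p/W_{p+1}}({\cal F})$, condition (A2) for the trace filtration follows by decomposing $W_p\setminus W_{p+1}$ into its smooth affine cells, and the degeneration giving the leftmost term of (1) only ever uses the \emph{lower} vanishing ${\cal H}^q_{W_p}({\cal F})=0$ for $q<p$, which comes from $\textup{codim}\,W_p=p$ and local freeness.

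The gap is in part (2), exactly at the step you single out as the hard part. The upper vanishing ${\cal H}^q_{W_p}({\cal F})=0$ for $q>p$ --- condition (A) for the trace filtration --- is false here, and Cohen--Macaulayness does not save it. The implication ``$Z$ Cohen--Macaulay $\Rightarrow$ local cohomology of a regular ambient variety along $Z$ is concentrated in degree $\textup{codim}\,Z$'' is a characteristic-$p$ theorem of Peskine--Szpiro; over $\mathbb{C}$ it fails, and it fails for the very Schubert variety the paper needs: $F_1=\overline{X_w}\subset\textrm{Gr}_3(V\oplus V^*)$ is, on the big cell, the locus of $3\times 3$ matrices of rank at most $1$, a Cohen--Macaulay determinantal variety of codimension $4$ whose local cohomological dimension in characteristic zero equals $9-2^2+1=6$ (Bruns--Schwänzl), so that ${\cal H}^6_{F_1}({\cal O}_X)\neq 0$. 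Hence $\underline{{\cal C}\textup{ousin}}_{\{Z\},w}{\cal F}$ is \emph{not} an exact resolution of ${\cal H}^i_{\overline{X_w}}({\cal F})$, and your computation of the homology of the global complex as ``global sections with support of an acyclic resolution'' breaks down (you would only get a hypercohomology spectral sequence with extra contributions from ${\cal H}^{q}_{\overline{X_w}}({\cal F})$, $q>i$). The paper's proof is arranged precisely to avoid invoking (A) for the trace filtration: it starts from the unrestricted complex $\underline{{\cal C}\textup{ousin}}_{\{Z\},i}{\cal F}$, which \emph{is} an exact resolution of ${\cal H}^i_{Z_i}({\cal F})$ by the preceding theorem (this uses (A) only for the full skeleta $Z_p$, where it holds for locally free sheaves by Kempf's equivalences), shows that its terms ${\cal H}^{i+k}_{Z_{i+k}/Z_{i+k+1}}({\cal F})$ are acyclic for $\Gamma_{\overline{X_w}}(X,\bullet)$, and identifies $\Gamma_{\overline{X_w}}(X,\bullet)$ of those terms with the terms of $\textup{Cousin}_{\{Z\},w}{\cal F}$; the homology is then computed as a derived functor of $\Gamma_{\overline{X_w}}(X,\bullet)$ with no exactness of the restricted local complex required. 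To repair your argument you should switch to that mechanism rather than seek a stronger vanishing theorem along $\overline{X_w}$.
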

\begin{proof}
Let $k\geq i$. Since ${\cal H}^j_{Z_k/Z_{k+1}}({\cal F})=0$ for $j\neq k$, and if we denote by $j:X\setminus Z_{k+1}\to X$ the inclusion, since we have a spectral sequence
\[
\bigoplus\limits_{l(w')=n-k}R^pj_*{\cal H}^q_{\overline{X_{w'}}\cap Z_k\setminus\overline{X_{w'}}\cap Z_{k+1}}({\cal F}_{|X\setminus Z_{k+1}})=R^pj_*{\cal H}^q_{Z_k\setminus Z_{k+1}}({\cal F}_{|X\setminus Z_{k+1}})\Rightarrow {\cal H}^*_{Z_k/Z_{k+1}}({\cal F})
\]
we get that ${\cal H}^{l}_{\overline{X_w}\cap Z_k/\overline{X_w}\cap Z_{k+1}}({\cal F})=0$ for $l<k$. The spectral sequence
\[
H^p_{\overline{X_w}}(X,{\cal H}^q_{\overline{X_w}\cap Z_k/\overline{X_w}\cap Z_{k+1}}({\cal F}))\Rightarrow H^*_{\overline{X_w}\cap Z_k/\overline{X_w}\cap Z_{k+1}}(X,{\cal F})
\]
gives us
\[
H^k_{\overline{X_w}\cap Z_k/\overline{X_w}\cap Z_{k+1}}(X,{\cal F})=H^0_{\overline{X_w}}(X,{\cal H}^k_{\overline{X_w}\cap Z_k/\overline{X_w}\cap Z_{k+1}}({\cal F}))
\]
Moreover, since $\overline{X_w}$ is closed and ${\cal F}$ is locally free, we have ${\cal H}^p_{\overline{X_w}}({\cal F})=0$ for $p<i$, and we have an isomorphism
\[
H^i_{\overline{X_w}}(X,{\cal F})=H^0_{\overline{X_w}}(X,{\cal H}^i_{\overline{X_w}}({\cal F}))
\]
This proves (1).\\
\indent
To prove (2), we will first use the fact that $\underline{{\cal C}\textup{ousin}}_{\{Z\},i}{\cal F}$ is a resolution of ${\cal H}^i_{Z_i}({\cal F})$, and that the functor $\underline{\Gamma}_{\overline{X_w}}(\bullet)$ is left-exact. Since we have a spectral sequence
\[
{\cal H}^q_{\overline{X_w}}({\cal H}^{p}_{Z_{i+k}/Z_{i+k+1}}({\cal F}))\Rightarrow {\cal H}^*_{\overline{X_w}\cap Z_{i+k}/\overline{X_w}\cap Z_{i+k+1}}({\cal F})
\]
and since $Z_{i+k}\setminus Z_{i+k+1}$ and $\overline{X_w}\cap Z_{i+k}\setminus \overline{X_w}\cap Z_{i+k+1}$ are affine, we get that ${\cal H}^0_{\overline{X_w}}({\cal H}^{i+k}_{Z_{i+k}/Z_{i+k}}({\cal F}))={\cal H}^{i+k}_{\overline{X_w}\cap Z_{i+k}/\overline{X_w}\cap Z_{i+k+1}}({\cal F})$, and that the ${\cal H}^{i+k}_{Z_{i+k}/Z_{i+k}}({\cal F})$ are $\underline{\Gamma}_{\overline{X_w}}(\bullet)$-acyclic. Moreover, the spectral sequence
\[
{\cal H}^q_{\overline{X_w}}({\cal H}^{p}_{Z_{i}}({\cal F}))\Rightarrow {\cal H}^*_{\overline{X_w}\cap Z_{i}}({\cal F})
\]
gives us ${\cal H}^0_{\overline{X_w}}({\cal H}^i_{Z_i}({\cal F}))$. Hence the sequence
\[
0\to {\cal H}^i_{\overline{X_w}\cap Z_i}({\cal F})\to {\cal H}^i_{\overline{X_w}\cap Z_i/\overline{X_w}\cap Z_{i+1}}({\cal F})\to {\cal H}^{i+1}_{\overline{X_w}\cap Z_{i+1}/\overline{X_w}\cap Z_{i+2}}({\cal F})\to \ldots\to {\cal H}^n_{\overline{X_w}\cap Z_n}({\cal F})\to 0
\]
is exact. The same argument as previously and the spectral sequence
\[
H^q_{\overline{X_w}}(X,{\cal H}^{p}_{Z_{i+k}/Z_{i+k+1}}({\cal F}))\Rightarrow H^*_{\overline{X_w}\cap Z_{i+k}/\overline{X_w}\cap Z_{i+k+1}}(X,{\cal F})
\]
gives us that the ${\cal H}^{i+k}_{Z_{i+k}/Z_{i+k}}({\cal F})$ are $\Gamma_{\overline{X_w}}(\bullet)$-acyclic, hence (2).
\end{proof}
\subsection{Structure of $G$-module}
In \cite{Ke1} (11), Kempf has proved that if ${\cal F}$ is quasi-coherent and $G$-linearized on an algebraic variety $X$, the local cohomology groups (resp. the global Cousin complexes) have natural $\mathfrak{g}$-module (resp. complex of $\mathfrak{g}$-modules) structure. It is still true for the Cousin complex
$\textup{Cousin}_{\{Z\},w}$, since the map 
\[
H^{n-l(w)}_{\overline{X}_w}(X,{\cal F})\to H^{n-l(w)}_{\overline{X_w}/(\overline{X_w}\cap Z_{n-l(w)+1})}(X,{\cal F})
\]
is a $\mathfrak{g}$-modules morphism. If $V$ is a $\mathfrak{g}$-module, if $T\subset G$ is a maximal torus, and if $\lambda\in {\cal X}(T)$, let $V_{\lambda}=\{v\in V|\forall t\in T, t.v=\lambda(t).v\}$. When all the $V_{\lambda}$ are finite dimensional, let $[V]=\sum \dim V_{\lambda} e^{\lambda}$ be the character of $V$. If $X=G/P$ is a flag variety with $\textrm{Pic}(X)=\mathbb{Z}$, and if ${\cal L}$ is a $G$-linearized invertible sheaf on $X$, we can approximate the character of the $\mathfrak{g}$-module $H^i_{\overline{X_w}}(X,{\cal L})$ thanks to the Cousin complex $\textup{Cousin}_{\{Z\},w}({\cal L})$ :
\begin{proposition}
Let $X_w$ be a Bruhat cell in $X$ of codimension $i$, and let $j$ be the first positive integer (if it exists) such that $H^j_{\overline{X_w}}(X,{\cal H}^i_{\overline{X_w}}{\cal L})$ is nonzero. Then
\[
\begin{tabular}{rcl}
$[H^i_{\overline{X_w}}(X,{\cal L})]$&$=$&$\sum\limits_{k=0}^{j-1}(-1)^k[H^{i+k}_{Z_{i+k}\cap\overline{X_w}/Z_{i+k+1}\cap\overline{X_w}}(X,{\cal L})]+(-1)^j[\textrm{im }(d_{j})]$\\
&$=$&$\sum\limits_{k=0}^{j-1}(-1)^k(\sum\limits_{\substack{l(w')=n-i-k \\ X_{w'}\subset \overline{X_w}}}[H^{i+k}_{X_{w'}}(X,{\cal L})])+(-1)^j[\textrm{im }(d_{j})]$
\end{tabular}
\]
and for all weight $\lambda$, we have $[\textrm{im }(d_{j})](\lambda)\leq [H^{i+j}_{Z_{i+j}\cap\overline{X_w}/Z_{i+j+1}\cap \overline{X_w}}(X,{\cal L})](\lambda)$
\end{proposition}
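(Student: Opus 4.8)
The plan is to extract both the character identity and the inequality from the restricted Cousin complex $\textup{Cousin}_{\{Z\},w}{\cal L}$ by a telescoping (Euler characteristic) computation, using the preceding theorem to pin down where this complex is exact. Abbreviate
\[
C^k:=H^{i+k}_{Z_{i+k}\cap\overline{X_w}/Z_{i+k+1}\cap\overline{X_w}}(X,{\cal L})=\bigoplus\limits_{\substack{l(w')=n-i-k \\ X_{w'}\subset\overline{X_w}}}H^{i+k}_{X_{w'}}(X,{\cal L}),
\]
so that the complex reads $0\to H^i_{\overline{X_w}}(X,{\cal L})\overset{d_0}\to C^0\overset{d_1}\to C^1\overset{d_2}\to\cdots$ with $d_k\colon C^{k-1}\to C^k$. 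By part (1) of the preceding theorem the augmentation $d_0$ identifies $H^i_{\overline{X_w}}(X,{\cal L})$ with $\ker d_1$, while by part (2) the cohomology of $C^\bullet$ at the spot $C^k$ is $H^k_{\overline{X_w}}(X,{\cal H}^i_{\overline{X_w}}{\cal L})$. By the definition of $j$ this cohomology vanishes for $1\le k\le j-1$, so $C^\bullet$ is exact in those degrees.

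First I would write down the short exact sequences $0\to\ker d_{k+1}\to C^k\to\textrm{im}\,d_{k+1}\to 0$ for all $k\ge 0$, and combine them with the equalities $\ker d_{k+1}=\textrm{im}\,d_k$ coming from exactness in degrees $1\le k\le j-1$. All the maps involved are $\mathfrak{g}$-module morphisms, hence $T$-equivariant, by the $\mathfrak{g}$-module structure recalled at the beginning of this subsection; consequently these sequences respect the weight decomposition and characters are additive along them. Passing to characters yields $[\ker d_1]=[C^0]-[\textrm{im}\,d_1]$ and, for $1\le k\le j-1$, the relation $[\textrm{im}\,d_k]=[\ker d_{k+1}]=[C^k]-[\textrm{im}\,d_{k+1}]$.

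Iterating these relations telescopes the alternating sum: one checks by induction that $[\ker d_1]=\sum_{k=0}^{m-1}(-1)^k[C^k]+(-1)^m[\textrm{im}\,d_m]$ for every $1\le m\le j$, the inductive step being exactly the substitution $[\textrm{im}\,d_m]=[C^m]-[\textrm{im}\,d_{m+1}]$, which is legitimate as long as $m\le j-1$. Taking $m=j$ and recalling $\ker d_1\simeq H^i_{\overline{X_w}}(X,{\cal L})$ gives
\[
[H^i_{\overline{X_w}}(X,{\cal L})]=\sum\limits_{k=0}^{j-1}(-1)^k[C^k]+(-1)^j[\textrm{im}\,d_j];
\]
substituting the direct-sum description of $C^k$ produces the second displayed equality of the statement.

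The inequality is then immediate: since $d_j$ is a $\mathfrak{g}$-module morphism, $\textrm{im}\,d_j$ is a $T$-stable subspace of $C^j$, so for each weight $\lambda$ the space $(\textrm{im}\,d_j)_\lambda$ embeds into $(C^j)_\lambda$ and hence $[\textrm{im}\,d_j](\lambda)\le[C^j](\lambda)$, which is the asserted bound. The only point that is not purely formal, and the one I would verify with care, is that all the characters above are well defined, i.e. that each $C^k$ has finite-dimensional weight spaces; this follows from the explicit structure of the local cohomology modules $H^{i+k}_{X_{w'}}(X,{\cal L})$ attached to the $B$-stable cells of the flag variety (cf. \cite{Ke1}), which decompose into finite-dimensional $T$-weight spaces, so that the additivity of characters used throughout is valid.
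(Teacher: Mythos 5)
Your proposal is correct and follows essentially the same route as the paper: the paper's proof likewise rests on the exactness of the truncated restricted Cousin complex up to the first degree $j$ where $H^j_{\overline{X_w}}(X,{\cal H}^i_{\overline{X_w}}{\cal L})$ is nonzero, with the character identity obtained by telescoping and the inequality coming from $\textrm{im}\,(d_j)$ being a sub-object of $H^{i+j}_{Z_{i+j}\cap\overline{X_w}/Z_{i+j+1}\cap\overline{X_w}}(X,{\cal L})$. You merely spell out the telescoping and the finite-dimensionality of weight spaces more explicitly than the paper does.
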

\begin{proof}
This is true because the sequence
\[
\begin{aligned}
0&\to H^i_{\overline{X_w}}(X,{\cal F})\to H^i_{\overline{X_w}/\overline{X_w}\cap Z_{i+1}}(X,{\cal F})\to\ldots \\
&\to H^{j-1}_{Z_{j-1}\cap\overline{X_w}/Z_j\cap\overline{X_w}}(X,{\cal F})\to H^{j}_{Z_{j}\cap\overline{X_w}/Z_{j+1}\cap\overline{X_w}}(X,{\cal F})\to H^{j}_{Z_{j}\cap\overline{X_w}/Z_{j+1}\cap\overline{X_w}}(X,{\cal F})/\textrm{im }(d_{j})\to 0
\end{aligned}
\]
is exact, and because $\textrm{im }(d_{j})$ is a sub-object of $H^{i+j}_{Z_{i+j}\cap\overline{X_w}/Z_{i+j+1}\cap \overline{X_w}}(X,{\cal L})$.
\end{proof}
The interesting point is that we can compute the characters $[H^{\textrm{codim}(X_{w'})}_{X_{w'}}(X,{\cal L})]$ thanks to \cite{Ke1} 12.8. Let $K(w)$ be the set of positive roots in $wR^u(P)w^{-1}$, let $L(w)$ be the set of the opposite of the negative roots in $wR^u(P)w^{-1}$, and let $J(w)=K(w)\sqcup L(w)$. Then we have
\begin{proposition}
$H^j_{X_w}(X,{\cal L}_{\lambda})=0$ for $j\neq \textrm{codim}(X_w)$, and
\[
[H^{\textrm{codim}(X_w)}_{X_w}(X,{\cal L}_{\lambda})]=e^{ww_{0,P'}(\lambda)}\frac{\prod\limits_{\alpha\in K(w)}e^{\alpha}}{\prod\limits_{\beta\in J(w)}(1-e^{\beta})}
\]
\end{proposition}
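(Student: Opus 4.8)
The plan is to reduce the whole computation to local cohomology of an affine space along a $T$-stable coordinate subspace, where both the vanishing and the character become explicit; this is the route Kempf takes in \cite{Ke1}, and all the needed vanishing facts are already in the preceding propositions. Since $X_w=BwP/P$ is locally closed, smooth and isomorphic to $\mathbb{A}^{l(w)}$, I would first choose a $T$-stable affine open $\Omega\cong\mathbb{A}^n$ around the fixed point $wP/P$ in which $X_w$ is \emph{closed} (the big cell translated to $wP/P$, whose coordinate ring is a polynomial ring carrying a linear $T$-action). By the excision lemma together with the identity $H^i_{Z_1/Z_2}(X,{\cal F})\simeq H^i_{Z_1\setminus Z_2}(X\setminus Z_2,{\cal F})$, this gives $H^j_{X_w}(X,{\cal L}_\lambda)\simeq H^j_{X_w\cap\Omega}(\Omega,{\cal L}_\lambda|_\Omega)$, so the problem becomes a computation over a polynomial ring. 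To pin down weights I would identify $T_{wP}(G/P)=\mathfrak{g}/\mathrm{Ad}(w)\mathfrak{p}$, whose $T$-weights are $\{-w\beta:\beta\in\Phi_{\mathfrak{n}}\}$ with $\Phi_{\mathfrak n}$ the roots of $R^u(P)$; hence the coordinate functions on $\Omega$ carry the weights $w\Phi_{\mathfrak n}$, i.e. the roots of $wR^u(P)w^{-1}$. The positive ones are exactly $K(w)$ and cut out the normal directions to the $B$-orbit $X_w$, so $\mathrm{codim}(X_w)=|K(w)|$, while the negative ones are $-L(w)$ and are tangent to $X_w$, so $\dim X_w=|L(w)|=l(w)$.

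In $\Omega$ the ideal of $X_w$ is therefore generated by the $c:=|K(w)|$ coordinates $x_\alpha$ $(\alpha\in K(w))$ of the normal directions, so $X_w\cap\Omega$ is a complete intersection; since $\Omega$ is smooth (hence Cohen--Macaulay) and ${\cal L}_\lambda|_\Omega$ is free, the earlier proposition on the vanishing of ${\cal H}^i_Z$ for complete intersections in Cohen--Macaulay varieties yields $H^j_{X_w}(X,{\cal L}_\lambda)=0$ for $j\neq c$. For the nonzero degree $c$, the top local cohomology of a polynomial ring along a coordinate subspace is the explicit ``inverse-polynomial'' module, free over the functions $k[\{y_\delta\}_{\delta\in L(w)}]$ on $X_w$ with inverse powers in the normal variables; after tensoring with the fiber of ${\cal L}_\lambda$ at the base point it has $k$-basis the monomials
\[
s=\prod_{\alpha\in K(w)}x_\alpha^{-a_\alpha}\prod_{\delta\in L(w)}y_\delta^{b_\delta},\qquad a_\alpha\geq 1,\ b_\delta\geq 0 .
\]
Reading off the weights ($x_\alpha$ has weight $\alpha$, $y_\delta$ has weight $-\delta$) and summing, the $T$-character is
\[
[H^{c}_{X_w}(X,{\cal L}_\lambda)]=e^{\nu}\prod_{\alpha\in K(w)}\Big(\sum_{a\geq1}e^{-a\alpha}\Big)\prod_{\delta\in L(w)}\Big(\sum_{b\geq0}e^{-b\delta}\Big),
\]
where $e^\nu$ is the fiber weight of ${\cal L}_\lambda$ at the generator. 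Rewriting each factor as a rational function and collecting the $\det N_{X_w/\Omega}$ contribution $\prod_{\alpha\in K(w)}e^{\alpha}$ (the product of the normal weights) against a single factor $\tfrac{1}{1-e^\beta}$ for each $\beta\in J(w)=K(w)\sqcup L(w)$ then reproduces the claimed expression $e^{ww_{0,P}(\lambda)}\prod_{\alpha\in K(w)}e^\alpha\big/\prod_{\beta\in J(w)}(1-e^\beta)$.

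The hard part will be the exact bookkeeping of signs and the identification of the scalar prefactor as $e^{ww_{0,P}(\lambda)}$. Because a local cohomology module has weights unbounded below, its presentation as the rational function $\prod_{K(w)}e^\alpha/\prod_{J(w)}(1-e^\beta)$ is only meaningful once a fixed expansion convention for each $\tfrac{1}{1-e^\beta}$ is chosen; matching that convention with the honest module character, tracking the $\det N_{X_w/\Omega}$ twist (the fundamental local isomorphism $H^c_{X_w}({\cal O})\simeq {\cal O}_{X_w}\otimes\det N_{X_w/\Omega}$ up to completion), and propagating the fiber weight through the $P$-linearization of ${\cal L}_\lambda$ --- which is precisely where the longest element $w_{0,P}$ of the Weyl group of the Levi enters --- are the delicate points. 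The vanishing statement $H^j_{X_w}=0$ for $j\neq\mathrm{codim}(X_w)$, by contrast, is immediate from the complete-intersection structure and the Cohen--Macaulay vanishing proposition proved earlier.
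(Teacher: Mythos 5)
The paper does not actually prove this proposition: it is quoted verbatim from Kempf \cite{Ke1}~12.8, with no argument given. So you are not competing with an internal proof but reconstructing Kempf's computation, and your structural outline is the right one (and essentially his): excise to the translated big cell $\Omega_w=wU_{P'}^-P'/P'$, in which $X_w=\bigl(\prod_{\delta\in L(w)}U_\delta\bigr)\cdot wP'/P'$ is a closed $T'$-stable coordinate subspace cut out by the $|K(w)|$ normal coordinates; get the vanishing for $j\neq\mathrm{codim}(X_w)$ from the Cohen--Macaulay/complete-intersection proposition of Section~4.1; and identify the surviving group with the inverse-polynomial module. Two small points should be made explicit. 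First, that $X_w$ is closed in $\Omega_w$ (i.e.\ that no cell $X_{w'}$ with $w'<w$ meets $\Omega_w$) deserves a line: choose a one-parameter subgroup $\mu$ with $\langle\mu,w\beta\rangle>0$ for all $\beta\in\Phi_{\mathfrak n}$, so that $\Omega_w$ is the attracting cell of $wP'$; any point of $\overline{X_{w'}}\cap\Omega_w$ then flows to $wP'$, forcing $w\leq w'$. Second, your identification of $\dim X_w=|L(w)|$ and $\mathrm{codim}\,X_w=|K(w)|$ is correct.

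The genuine gap is in the last step, and it is more than a bookkeeping nuisance. With the weights you assign ($x_\alpha$ of weight $\alpha$ for $\alpha\in K(w)$, $y_\delta$ of weight $-\delta$ for $\delta\in L(w)$), your basis of monomials gives
\[
e^{\nu}\prod_{\alpha\in K(w)}\Bigl(\textstyle\sum_{a\geq1}e^{-a\alpha}\Bigr)\prod_{\delta\in L(w)}\Bigl(\textstyle\sum_{b\geq0}e^{-b\delta}\Bigr)
=(-1)^{|J(w)|}\,e^{\nu}\,\frac{\prod_{\delta\in L(w)}e^{\delta}}{\prod_{\beta\in J(w)}(1-e^{\beta})},
\]
which has $\prod_{L(w)}e^{\delta}$ in the numerator, an unwanted sign, and --- more importantly --- weights bounded \emph{above}, whereas the stated formula, expanded the way the paper actually uses it later ($\tfrac{1}{1-e^{\beta}}=\sum_{k\geq0}e^{k\beta}$, to conclude that the weights of $H^4_{F_1}$ are bounded \emph{below} by $8+k$), has weights going up from $ww_{0,P'}(\lambda)+\sum_{K(w)}\alpha$. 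These are genuinely different characters, not two presentations of the same rational function, so ``rewriting each factor as a rational function'' cannot reproduce the claimed expression from your display. The fix is to get the action convention right at the start: in the normalization for which the proposition (and its downstream use) is correct, the coordinate function cutting out the normal direction indexed by $\alpha\in K(w)$ carries weight $-\alpha$ and the tangential coordinate indexed by $\delta\in L(w)$ carries weight $+\delta$, after which the monomial basis directly yields $e^{ww_{0,P'}(\lambda)}\prod_{K(w)}\bigl(\sum_{a\geq1}e^{a\alpha}\bigr)\prod_{L(w)}\bigl(\sum_{b\geq0}e^{b\delta}\bigr)$ with no sign or reflection to repair. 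You do flag the sign conventions as the delicate point, but as written the computation lands on the reflected formula, so this is the step that still has to be done rather than a cosmetic check. The vanishing half of the statement, by contrast, is fully covered by your argument together with the paper's Proposition in Section~4.1.
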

\subsection{Relations with cohomology on geometric quotients}
Let $X$ be a smooth projective irreducible algebraic variety acted on by a reductive algebraic group $G$, and let ${\cal M}$ be a $G$-linearized invertible sheaf on $X$. Assume the quotient $\pi:X^{ss}({\cal M})\to Y:=X/G$ is geometric. Let $X^us=X\setminus X^{ss}({\cal M})$. If ${\cal L}$ is an invertible sheaf on $Y$, we can construct a $G$-linearized invertible sheaf $\overline{\cal L}$ on $X^{ss}({\cal M})$ such that for all open $U\subset Y$, ${\cal L}(U)=\overline{\cal L}(\pi^{-1}(U))^G$, and this is done by pulling back a Cartier divisor on $Y$ to $X^{ss}({\cal M})$.\\
\indent
If ${\cal U}=(U_i)$ is a open cover of $Y$ by affine open subsets trivializing ${\cal L}$, the open cover ${\cal V}=(\pi^{-1}(U_i))$ of $X^{ss}({\cal M})$ is a cover by affine open $G$-invariant subsets trivializing $\overline{\cal L}$. We have two \v{C}ech complexes $C^*({\cal U},{\cal L})$ (with maps denoted by $d_k$) and $C^*({\cal V},\overline{\cal L})$ (with maps denoted by $d'_k$), and we have $C^*({\cal U},{\cal L})=C^*({\cal V},\overline{\cal L})^G$. It is obvious that $\ker d'_k=\ker d_k\cap C^k({\cal U},{\cal L})$, and we have
\begin{proposition}
$\textup{im} d'_k=\textup{im} d_k\cap C^{k+1}({\cal U},{\cal L})$ and $\check{H}^k({\cal U},{\cal L})=\check{H}^k({\cal V},\overline{\cal L})^G$.
\end{proposition}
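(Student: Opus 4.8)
The plan is to deduce everything from the linear reductivity of $G$. Since we work over $\mathbb{C}$ and $G$ is reductive, every rational $G$-module $V$ splits canonically as $V=V^{G}\oplus V'$, where $V'$ is the sum of the nontrivial isotypic components, and the resulting projection (the Reynolds operator) $R_{V}\colon V\to V^{G}$ is natural, i.e. it commutes with every morphism of $G$-modules. In the situation at hand $G=\mathbb{C}^{*}$ and $R_{V}$ is simply the projection onto the weight-zero part. The first point to check is that the \v{C}ech groups $C^{k}({\cal V},\overline{\cal L})=\prod \overline{\cal L}(\pi^{-1}(U_{i_{0}\cdots i_{k}}))$ are rational $G$-modules: each $\pi^{-1}(U_{i_{0}\cdots i_{k}})$ is affine and $G$-invariant and $\overline{\cal L}$ is $G$-linearized, so its sections form a direct sum of weight spaces for $\mathbb{C}^{*}$. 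The differentials $d'_{k}$ are $G$-equivariant, and by construction $C^{k}({\cal U},{\cal L})=C^{k}({\cal V},\overline{\cal L})^{G}$ with $d_{k}=d'_{k}|_{C^{k}({\cal U},{\cal L})}$; hence $R_{k}$ maps $C^{k}({\cal V},\overline{\cal L})$ onto $C^{k}({\cal U},{\cal L})$ and, by naturality, $R_{k+1}\circ d'_{k}=d'_{k}\circ R_{k}$.

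For the statement on images, the inclusion $\textup{im}\,d_{k}\subseteq \textup{im}\,d'_{k}\cap C^{k+1}({\cal U},{\cal L})$ is immediate since $d_{k}$ is the restriction of $d'_{k}$. For the reverse inclusion I would take $\eta=d'_{k}(\xi)$ with $\eta\in C^{k+1}({\cal U},{\cal L})$, that is $\eta$ is $G$-invariant. Applying $R_{k+1}$ and using naturality gives $\eta=R_{k+1}(\eta)=R_{k+1}(d'_{k}\xi)=d'_{k}(R_{k}\xi)=d_{k}(R_{k}\xi)$, and $R_{k}\xi\in C^{k}({\cal U},{\cal L})$, so $\eta\in\textup{im}\,d_{k}$. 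This is the crux of the argument: the Reynolds operator lets one replace an arbitrary preimage $\xi$ by an invariant one $R_{k}\xi$ without changing the invariant coboundary.

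Finally, the equality of cohomology is best seen as the exactness of the functor $(-)^{G}$ on rational $G$-modules, which is again a direct consequence of the canonical splitting above. Applying $(-)^{G}$ termwise to the complex $C^{\bullet}({\cal V},\overline{\cal L})$ commutes with taking homology, giving $\check{H}^{k}(C^{\bullet}({\cal V},\overline{\cal L})^{G})=\check{H}^{k}(C^{\bullet}({\cal V},\overline{\cal L}))^{G}$, which is exactly $\check{H}^{k}({\cal U},{\cal L})=\check{H}^{k}({\cal V},\overline{\cal L})^{G}$. Concretely, one applies $(-)^{G}$ to the short exact sequence $0\to\textup{im}\,d'_{k-1}\to\ker d'_{k}\to\check{H}^{k}({\cal V},\overline{\cal L})\to 0$ and uses $(\ker d'_{k})^{G}=\ker d_{k}$ together with $(\textup{im}\,d'_{k-1})^{G}=\textup{im}\,d_{k-1}$ (the image identity just proved) to obtain $\check{H}^{k}({\cal V},\overline{\cal L})^{G}=\ker d_{k}/\textup{im}\,d_{k-1}=\check{H}^{k}({\cal U},{\cal L})$. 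The only genuinely non-formal step is the verification that the possibly infinite-dimensional \v{C}ech modules are rational representations, so that the isotypic splitting, the Reynolds operator, and the exactness of invariants all apply; once this is granted the rest is a short diagram chase.
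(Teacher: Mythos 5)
Your argument is correct and is essentially the paper's own proof: the Reynolds operator $R_k$ is exactly the projection $p_k$ onto $C^k({\cal V},\overline{\cal L})^G$ that the paper obtains from reductivity, and your diagram chase with $R_{k+1}\circ d'_k=d'_k\circ R_k$ together with the exactness of $(-)^G$ is precisely the content of the paper's commutative square. You merely spell out the details the paper leaves implicit (including the rationality of the \v{C}ech modules, which is indeed the one point worth checking).
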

\begin{proof}
Since $G$ is reductive, $C^k({\cal V},\overline{\cal L})^G$ has a supplement in $C^k({\cal V},\overline{\cal L})$. Hence we have projections $p_k: C^k({\cal V},\overline{\cal L})\to C^k({\cal V},\overline{\cal L})^G$, and the diagram
\begin{center}
\begin{tikzcd}
C^{k}({\cal V},\overline{\cal L}) \arrow{r}{d_k} \arrow{d}{p_k}
		&C^{k+1}({\cal V},\overline{\cal L}) \arrow{d}{p_{k+1}}\\
C^{k}({\cal V},\overline{\cal L})^G \arrow{r}[below]{d'_k}
		&C^{k+1}({\cal V},\overline{\cal L})^G
\end{tikzcd}
\end{center}
commutes, hence $\textup{im} d'_k=\textup{im} d_k\cap C^{k+1}({\cal U},{\cal L})$ and $\check{H}^k({\cal U},{\cal L})=\check{H}^k({\cal V},\overline{\cal L})^G$.
\end{proof}
Hence we have isomorphisms $H^k(Y,{\cal L})\simeq H^k(X^{ss}({\cal M}),\overline{\cal L})^G$. We have a long exact sequence
\[
\ldots\to H^{i-1}(X^{ss}({\cal M}),\overline{\cal L})\to H^i_{X^{us}}(X,\overline{\cal L})\to H^i(X,\overline{\cal L})\to H^i(X^{ss}({\cal M}),\overline{\cal L})\to H^{i+1}_{X^{us}}(X,\overline{\cal L})\to\ldots
\]
This sequence gives us $H^k(X^{ss}({\cal M}),\overline{\cal L})$ in some cases :\\
\textbullet \indent if $H^k(X,\overline{\cal L})=H^{k+1}(X,\overline{\cal L})=0$, we have an isomorphism
\[
H^k(X^{ss}({\cal M}),\overline{\cal L})\simeq H^{k+1}_{X^{us}}(X,\overline{\cal L})
\]
\textbullet \indent if $k<\textrm{codim} X^{us}-1$, then
\[
H^k(X^{ss}({\cal M}),\overline{\cal L})\simeq H^k(X,\overline{\cal L})
\]
Moreover, $X^{us}$ is $G$-stable, hence these isomorphisms are isomorphisms of $G$-modules, and we have the same isomorphisms between their $G$-invariants.\\
\indent
When $X^{us}({\cal M})$ has codimension at least two, we can compare invertible sheaves on $Y$ and $G$-linearized invertible sheaves on $X$ :
\begin{proposition}
Let $j:X^{ss}({\cal M})\to X$ be the inclusion, and assume that $X^{us}({\cal M})$ has codimension at least two. Then the morphism
\[
\begin{array}{rcl}
q:\textrm{Pic}^G(X)&\to &\textrm{Pic}(Y)\\
{\cal L}&\mapsto&(\pi_*j^*{\cal L})^G
\end{array}
\]
admits a cosection $s:\textrm{Pic}(Y)\to\textrm{Pic}^G(X)$, which is a group morphism.
\end{proposition}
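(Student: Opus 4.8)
The plan is to build $s$ by pulling back invertible sheaves along the quotient map $\pi$ and extending them across the unstable locus. Given ${\cal L}\in\textrm{Pic}(Y)$, the pullback $\pi^*{\cal L}$ is an invertible sheaf on $X^{ss}({\cal M})$, and since $\pi$ is $G$-invariant the equality $\pi\circ\mu=\pi\circ p_2$ on $G\times X^{ss}({\cal M})$ (where $\mu$ is the action and $p_2$ the second projection) furnishes a canonical isomorphism $\mu^*\pi^*{\cal L}\simeq p_2^*\pi^*{\cal L}$ satisfying the cocycle condition; this is a canonical $G$-linearization, so $\pi^*{\cal L}\in\textrm{Pic}^G(X^{ss}({\cal M}))$. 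I would then set $s({\cal L})$ to be the unique $G$-linearized extension of $\pi^*{\cal L}$ to $X$, whose existence is the content of the next step.

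First I would extend the underlying invertible sheaf. Since $X$ is smooth it is locally factorial, so $\textrm{Pic}(X)=\textrm{Cl}(X)$; as $X^{us}({\cal M})$ has codimension at least two, restriction of Weil divisors identifies $\textrm{Cl}(X)$ with $\textrm{Cl}(X^{ss}({\cal M}))$, so $j^*:\textrm{Pic}(X)\to\textrm{Pic}(X^{ss}({\cal M}))$ is an isomorphism and provides a unique invertible sheaf $\overline{\cal L}$ on $X$ with $j^*\overline{\cal L}\simeq\pi^*{\cal L}$. To propagate the linearization, I would view it as a nowhere-vanishing section of the invertible sheaf $(\mu^*\overline{\cal L})^{\vee}\otimes p_2^*\overline{\cal L}$ on $G\times X$. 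Because $G$ is smooth and $X^{us}({\cal M})$ has codimension at least two, $G\times X^{us}({\cal M})$ has codimension at least two in $G\times X$, so the section defined on $G\times X^{ss}({\cal M})$ extends uniquely; the extension is still nowhere vanishing, since its zero locus would be a divisor meeting the dense open $G\times X^{ss}({\cal M})$, and so it is an isomorphism. The cocycle condition, an identity of isomorphisms on $G\times G\times X$, holds because it holds on the dense open $G\times G\times X^{ss}({\cal M})$. This defines $s({\cal L}):=\overline{\cal L}$ in $\textrm{Pic}^G(X)$.

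It remains to check that $s$ is a group morphism splitting $q$. For the splitting, $j^*s({\cal L})=\pi^*{\cal L}$ gives
\[
q(s({\cal L}))=(\pi_*\pi^*{\cal L})^G=\big({\cal L}\otimes\pi_*{\cal O}_{X^{ss}({\cal M})}\big)^G={\cal L}\otimes(\pi_*{\cal O}_{X^{ss}({\cal M})})^G={\cal L},
\]
by the projection formula together with $(\pi_*{\cal O}_{X^{ss}({\cal M})})^G={\cal O}_Y$ for the geometric quotient. For multiplicativity, both $s({\cal L}_1\otimes{\cal L}_2)$ and $s({\cal L}_1)\otimes s({\cal L}_2)$ restrict under $j^*$ to $\pi^*{\cal L}_1\otimes\pi^*{\cal L}_2$, linearizations included; since the codimension-two argument makes $j^*:\textrm{Pic}^G(X)\to\textrm{Pic}^G(X^{ss}({\cal M}))$ injective (an isomorphism of $G$-linearized sheaves on $X^{ss}({\cal M})$ extends across $G\times X^{us}({\cal M})$ exactly as above), the two sheaves coincide, so $s$ is a homomorphism.

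I expect the main obstacle to be the extension of the $G$-linearization rather than of the sheaf itself. Extending the underlying invertible sheaf is the classical codimension-two statement on a locally factorial variety; the care lies in checking that the linearization isomorphism, being a section of an invertible sheaf on $G\times X$, extends across $G\times X^{us}({\cal M})$ and remains nowhere vanishing, and that its cocycle identity on $G\times G\times X$ survives the extension. This is precisely where smoothness of both $G$ and $X$, hence codimension at least two of the relevant unstable loci and normality, are used.
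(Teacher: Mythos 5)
Your proof is correct and follows essentially the same route as the paper: pull ${\cal L}$ back along $\pi$ (with its canonical linearization coming from $\pi\circ\mu=\pi\circ p_2$) and extend across the unstable locus using that it has codimension at least two, the paper phrasing this concretely via local equations $(\pi^*f_i)$ of a Cartier divisor declared $G$-invariant. Your version is somewhat more careful where the paper is terse, namely in extending the linearization isomorphism as a nowhere-vanishing section over $G\times X$ and checking the cocycle identity by density, but the underlying argument is the same.
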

\begin{proof}
Let ${\cal L}\in\textrm{Pic}(Y)$. There is a Cartier divisor $D=(f_i)$ such that ${\cal L}={\cal O}_Y(D)$. Then $(\pi^*f_i)$ is a Cartier divisor on $X^{ss}(F)$, and since $X^{ss}(F)$ is of codimension at least two, the $\pi^*f_i$ can be uniquely extended to rational functions $\tilde{f_i}$ on $X$. Let $\tilde{D}=(\tilde{f_i})$, and $s({\cal L})={\cal O}_X(\tilde{D})$, with a $G$-linearization given by saying the $\tilde{f_i}$'s are $G$-invariant. Then $q(s({\cal L}))={\cal L}$, and since $(\pi^*f_i)(\pi^*g_j)=\pi^*(f_ig_j)$, $s$ is a group morphism.
\end{proof}
\subsection{Case of $\overline{\textrm{PGL}_3}$}
Before going back to our examples of type $A_2$, let us give some results.\\
If $V$ is a $\mathfrak{g}$-module and if $g\in G$, we will denote by $gV$ the $\mathfrak{g}$-module $V$ with the action $\xi._{g}v=ad(g)(\xi).v$.
\begin{proposition}
If $X$ is a $G$-variety, if $Z\subset X$ is closed, and if ${\cal L}$ is $G$-linearized, for all $g\in G$, we have an isomorphism of $\mathfrak{g}$-modules $H^i_{gZ}(X,{\cal L})=g^{-1}H^i_Z(X,g{\cal L})$.
\end{proposition}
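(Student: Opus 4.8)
The plan is to realize the identity as the effect of left–translation by $g$ together with the functoriality of local cohomology. Write $\phi_g:X\to X$ for the automorphism $x\mapsto g\cdot x$; it is an isomorphism of varieties carrying $Z$ onto $gZ$, so $\phi_g^{-1}(gZ)=Z$. I would feed $f=\phi_g$ into point (4) of the local cohomology proposition, applied to the sheaf $\phi_g^{*}{\cal L}$ and to the closed subsets $\emptyset\subset gZ$. Since $\phi_g$ is an isomorphism we have $R^{q}(\phi_g)_{*}=0$ for $q>0$ and $(\phi_g)_{*}\phi_g^{*}{\cal L}={\cal L}$, so that spectral sequence degenerates and produces a natural isomorphism of abelian groups
\[
H^{i}_{gZ}(X,{\cal L})\;\xrightarrow{\ \sim\ }\;H^{i}_{Z}(X,\phi_g^{*}{\cal L}).
\]
Because ${\cal L}$ is $G$-linearized, $\phi_g$ lifts to the total space of ${\cal L}$ and $\phi_g^{*}{\cal L}$ inherits a linearization; I would set $g{\cal L}:=\phi_g^{*}{\cal L}$ equipped with this structure, in keeping with the twist convention $gV$ introduced above.

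It remains to check that this isomorphism is compatible with the infinitesimal $\mathfrak{g}$-actions, up to the prescribed twist. Recall from Kempf that $\mathfrak{g}$ acts on $H^{i}_{Z}(X,{\cal L})$ and on $H^{i}_{gZ}(X,{\cal L})$ through the differentiated linearization; this is well defined for an arbitrary closed support because it is implemented by first-order differential operators, which do not enlarge supports and hence preserve $\Gamma_{Z}$ and $\Gamma_{gZ}$. Under the linearization the displayed map is identified with the action of the group element $g$ on the total local cohomology, so differentiating the relation $g\exp(t\xi)g^{-1}=\exp\bigl(t\,ad(g)(\xi)\bigr)$ shows that it carries $\xi$ to $ad(g)(\xi)$. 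In the twisted-module notation this is exactly the assertion that it is an isomorphism of $\mathfrak{g}$-modules $g^{-1}H^{i}_{Z}(X,g{\cal L})\xrightarrow{\ \sim\ }H^{i}_{gZ}(X,{\cal L})$, which is the claim.

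The formal part — functoriality and the degeneration of the spectral sequence — is routine; the genuine point requiring care is the bookkeeping of the two twists. One must verify that the geometric pullback $\phi_g^{*}$ accounts for the sheaf twist $g{\cal L}$, while the failure of $\phi_g$ to be $G$-equivariant (it intertwines the action with its conjugate, $\phi_g(h\cdot x)=(ghg^{-1})\cdot\phi_g(x)$) is precisely what produces the $ad(g)$, hence the module twist $g^{-1}$. I expect the main obstacle to be fixing the conventions — left versus right action, and $ad(g)$ versus $ad(g^{-1})$ — so that the two twists land on the correct sides; once Kempf's action is identified with the derivative of the honest $G$-action, no further analysis of the local cohomology functors is needed.
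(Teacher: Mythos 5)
Your argument is correct and rests on the same underlying idea as the paper's proof --- transporting local cohomology along the translation automorphism $\phi_g$ --- but the formal implementation is genuinely different. The paper verifies the statement only in degree $0$, exhibiting the isomorphism directly on sections as $\sigma\mapsto g^{-1}\sigma$ from $\Gamma_{gZ}(X,{\cal L})$ to $g^{-1}\Gamma_Z(X,g{\cal L})$, and declares that this suffices; implicitly this appeals to the fact that both sides are universal $\delta$-functors derived from the degree-zero case and that twisting by $g$ is an exact equivalence, so the natural isomorphism propagates to all $i$. You instead obtain the isomorphism in every degree at once from the degenerate spectral sequence of point (4) applied to $f=\phi_g$ and ${\cal F}=\phi_g^{*}{\cal L}$, which makes that propagation explicit rather than implicit --- arguably a more complete account of the formal part. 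Conversely, the paper's degree-zero computation makes the $ad(g)$-twisted $\mathfrak{g}$-equivariance visible directly on sections, whereas you verify it separately by differentiating $g\exp(t\xi)g^{-1}=\exp\bigl(t\,ad(g)(\xi)\bigr)$; both routes are sound, and your observation that the $\mathfrak{g}$-action is implemented by first-order operators which do not enlarge supports is a point the paper leaves unsaid. The only residual issue is the one you already flag: whether $g{\cal L}$ means $\phi_g^{*}{\cal L}$ or $(\phi_g)_{*}{\cal L}$, and on which side the $ad(g)$ lands, are pure conventions on which the paper itself is not explicit, so no genuine gap arises from them.
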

\begin{proof}
It is enough to show it for $i=0$ :
\[
\begin{aligned}
\Gamma_{gZ}(X,{\cal L})&=\{\sigma\in\Gamma(X,{\cal L}),\sigma_{|gZ}=0\}\\
&=\{g^{-1}\sigma,\sigma\in\Gamma_Z(X,g{\cal L}),\sigma_{|Z}=0\}\\
&=g^{-1}\Gamma_Z(X,g{\cal L})
\end{aligned}
\]
\end{proof}
Moreover, when $X=G/P$ is a flag variety, we show :
\begin{proposition}
If $Z\subset X$ is closed and connected of codimension $l$, then for all invertible sheaf ${\cal L}$ on $X$, $H^l_{Z}(X,{\cal L})$ is simple as a left $D_{X,{\cal L}}$-module.
\end{proposition}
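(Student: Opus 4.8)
The plan is to descend the question to a statement about $\mathcal{D}$-modules on the flag variety $X$ and then invoke its $\mathcal{D}$-affineness. First I would reduce the local cohomology \emph{group} to the global sections of a local cohomology \emph{sheaf}. Since $X=G/P$ is smooth, hence Cohen--Macaulay, and ${\cal L}$ is locally free, the Cohen--Macaulay vanishing proved above gives ${\cal H}^i_Z({\cal L})=0$ for all $i<\textrm{codim}(Z)=l$. Feeding this into the spectral sequence $(*)$, namely $H^p(X,{\cal H}^q_Z({\cal L}))\Rightarrow H^{p+q}_Z(X,{\cal L})$, all terms with $q<l$ vanish, so in total degree $l$ only the term $E_2^{0,l}$ survives and no differential can reach it (the target $E_2^{2,l-1}$ vanishes). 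Hence $H^l_Z(X,{\cal L})=H^0(X,{\cal H}^l_Z({\cal L}))$.

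Next I would pass to $\mathcal{D}$-modules. Tensoring by ${\cal L}^{-1}$ conjugates the ring ${\cal D}_{X,{\cal L}}={\cal L}\otimes{\cal D}_X\otimes{\cal L}^{\otimes -1}$ to ${\cal D}_X$ and gives an equivalence between quasi-coherent ${\cal D}_{X,{\cal L}}$-modules and quasi-coherent ${\cal D}_X$-modules, carrying ${\cal H}^l_Z({\cal L})={\cal H}^l_Z({\cal O}_X)\otimes{\cal L}$ to ${\cal H}^l_Z({\cal O}_X)$. Because the flag variety $X$ is $\mathcal{D}$-affine (Beilinson--Bernstein, \cite{BB}), the global sections functor is an equivalence between quasi-coherent ${\cal D}_X$-modules and $D_X$-modules which preserves simple objects. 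Therefore $H^0(X,{\cal H}^l_Z({\cal L}))$ is simple over $D_{X,{\cal L}}$ if and only if the sheaf ${\cal M}:={\cal H}^l_Z({\cal O}_X)$ is simple as a ${\cal D}_X$-module, and this is the statement I would prove.

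To analyse ${\cal M}$, set $V=X\setminus Z_{\textup{sing}}$, where $Z_{\textup{sing}}$ has codimension strictly greater than $l$. Over $V$ the subvariety $Z\cap V$ is smooth, so ${\cal M}_{|V}$ is the ${\cal D}_V$-module ${\cal B}_{(Z\cap V)|V}$ of the smooth subvariety $Z\cap V$; here the connectedness (irreducibility) of $Z$ is used to ensure a single irreducible component and hence a single simple factor, so ${\cal M}_{|V}$ is simple. On the other hand, the spectral sequence ${\cal H}^p_{Z_{\textup{sing}}}({\cal H}^q_Z({\cal O}_X))\Rightarrow {\cal H}^{p+q}_{Z_{\textup{sing}}}({\cal O}_X)$, valid since $Z_{\textup{sing}}\subset Z$, combined with the vanishing of ${\cal H}^{j}_{Z_{\textup{sing}}}({\cal O}_X)$ for $j<\textrm{codim}(Z_{\textup{sing}})$, forces $\underline{\Gamma}_{Z_{\textup{sing}}}{\cal M}={\cal H}^0_{Z_{\textup{sing}}}({\cal M})=0$. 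Consequently ${\cal M}$ has no nonzero subsheaf supported on $Z_{\textup{sing}}$, so any nonzero ${\cal D}_X$-submodule ${\cal N}\subseteq{\cal M}$ restricts to a nonzero, hence by simplicity the whole, submodule over $V$: ${\cal N}_{|V}={\cal M}_{|V}$.

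The hard part, and the step I expect to be the main obstacle, is to upgrade this to full simplicity, i.e. to rule out a proper submodule ${\cal N}$ with ${\cal N}_{|V}={\cal M}_{|V}$, equivalently to show that ${\cal M}$ has no nonzero quotient supported on $Z_{\textup{sing}}$ — in other words that ${\cal H}^l_Z({\cal O}_X)$ is the intermediate extension of ${\cal B}_{(Z\cap V)|V}$ rather than a strictly larger local cohomology sheaf. I would obtain this from holonomic duality: one checks that the dual $\mathbb{D}{\cal M}$ is again of the same local-cohomology type, using that the subvarieties relevant here (for instance Schubert varieties) are Cohen--Macaulay, so that $R\underline{\Gamma}_Z({\cal O}_X)$ is concentrated in degree $l$; then the no-submodule statement applied to $\mathbb{D}{\cal M}$ dualizes to the no-quotient statement for ${\cal M}$. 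Together with the simplicity of ${\cal M}_{|V}$ this makes ${\cal M}$ simple, and the two reductions above then yield the proposition. The delicate point is precisely this purity/duality input: without a Cohen--Macaulay (or rational-singularity) hypothesis on $Z$, the local cohomology sheaf ${\cal H}^l_Z({\cal O}_X)$ could a priori acquire extra composition factors supported along $Z_{\textup{sing}}$.
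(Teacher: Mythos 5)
Your two preliminary reductions coincide with the paper's: the degeneration of the spectral sequence $(*)$ under the Cohen--Macaulay vanishing gives $H^l_Z(X,{\cal L})=H^0(X,{\cal H}^l_Z({\cal L}))$, and ${\cal D}$-affineness of the flag variety reduces everything to the simplicity of the sheaf ${\cal H}^l_Z({\cal O}_X)$ as a ${\cal D}_X$-module. From there you genuinely diverge: the paper passes to the analytic category and invokes a statement from \cite{BK} saying that every coherent ${\cal D}_{X^{an}}$-module with characteristic variety contained in that of ${\cal H}^l_{[Z]}({\cal O}_{X^{an}})$ is ${\cal H}^l_{[Z]}({\cal O}_{X^{an}})\otimes_{\mathbb{C}}L$ for a local system $L$, so that connectedness of $Z$ finishes the proof; you instead try to identify ${\cal H}^l_Z({\cal O}_X)$ with the intermediate extension of ${\cal B}_{(Z\cap V)|V}$ by excluding subobjects and quotients supported on $Z_{\textup{sing}}$. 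Your ``no submodule'' half is correct and cleanly argued.

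The gap is exactly where you place it, but it cannot be closed as you propose, because the missing step is not merely hard --- it fails under the stated hypotheses, and Cohen--Macaulayness does not rescue it. Take $X=\mathbb{P}^4$ and $Z=\{x_1x_2=x_3x_4\}$, a connected lci (hence CM) hypersurface with a single singular point $P$. On the affine chart around $P$, with $f=x_1x_2-x_3x_4$, the Bernstein--Sato polynomial is $(s+1)(s+2)$, so ${\cal O}[f^{-1}]={\cal D}\cdot f^{-2}\supsetneq{\cal D}\cdot f^{-1}\supseteq{\cal O}$, and ${\cal D}f^{-1}/{\cal O}$ is a proper nonzero ${\cal D}$-submodule of ${\cal H}^1_{Z}({\cal O})={\cal O}[f^{-1}]/{\cal O}$, with quotient the delta module at $P$. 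Restriction to an open subset is exact on holonomic modules, so the length of ${\cal H}^1_Z({\cal O}_{\mathbb{P}^4})$ is at least that of its restriction to this chart, namely at least $2$; by ${\cal D}$-affineness $H^1_Z(\mathbb{P}^4,{\cal O})$ is then a non-simple $D_{\mathbb{P}^4}$-module. Conceptually, when $R\underline{\Gamma}_Z({\cal O}_X)$ is concentrated in degree $l$, Riemann--Hilbert identifies ${\cal H}^l_Z({\cal O}_X)$ with the Verdier dual of $\mathbb{C}_Z[\dim Z]$, so simplicity is equivalent to $\mathbb{C}_Z[\dim Z]=\textrm{IC}_Z$, i.e.\ to $Z$ being a $\mathbb{C}$-homology manifold --- strictly stronger than CM, and not implied by rational singularities either (the quadric cone has rational singularities). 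The hypothesis ``connected'' would have to be strengthened to something like ``rationally smooth''.

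Two remarks to put this in context. First, this is not a defect of your route relative to the paper's: the result of \cite{BK} that the paper cites is proved there for a \emph{smooth} closed $Z$, whose characteristic variety is a single smooth conormal bundle; for singular $Z$ the characteristic variety of ${\cal H}^l_{[Z]}({\cal O})$ acquires components over $Z_{\textup{sing}}$ and the classification by local systems on $Z$ breaks down, so the paper's proof has the same hole, and your ``delicate point'' diagnoses it more honestly than the paper does. Second, the issue is not hypothetical for the intended application: near the point $V$, the subvariety $F_1$ is the locus of rank-$\le 1$ maps in $\textrm{Hom}(V,V^*)$, i.e.\ the affine cone over $\mathbb{P}^2\times\mathbb{P}^2$, which is Cohen--Macaulay but not a homology manifold, so ${\cal H}^4_{F_1}({\cal O}_X)$ should likewise acquire composition factors supported at $V$. (Your parenthetical ``connectedness (irreducibility)'' also correctly flags that connectedness alone would not even make ${\cal M}_{|V}$ simple if $Z$ were reducible.)
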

\begin{proof}
Let $X^{an}$ be the analytic space associated to $X$, and $j:X^{an}\to X$ the natural map. Let $\underline{\Gamma}_{[Z]}(\bullet)$ and ${\cal H}^*_{[Z]}(\bullet)$ be the analytic equivalents of the functors $\underline{\Gamma}_{Z}(\bullet)$ and ${\cal H}^*_{Z}(\bullet)$. Let us recall the two following facts (cf. \cite{BK}) :\\
\textbullet \indent for all coherent ${\cal O}_X$-module, ${\cal H}^*_{[Z]}(j^*{\cal F})=j^*{\cal H}^*_Z({\cal F})$\\
\textbullet \indent if ${\cal M}$ is a coherent ${\cal D}_{X^{an}}$-module whose characteristic variety is contained in the characteristic variety of ${\cal H}^l_{[Z]}({\cal O}_{X^{an}})$, the sheaf ${\cal H}\textup{om}_{{\cal D}_{X^{an}}}({\cal H}^l_{[Z]}({\cal O}_{X^{an}}),{\cal M})$ is locally constant of finite rank, and we have an isomorphism
\[
{\cal H}^l_{[Z]}({\cal O}_{X^{an}})\otimes_{\mathbb{C}}{\cal H}\textup{om}_{{\cal D}_{X^{an}}}({\cal H}^l_{[Z]}({\cal O}_{X^{an}}),{\cal M})\simeq {\cal M}
\]
\indent
Now assume $Z$ is connected. If ${\cal M}$ is a coherent left sub-${\cal D}_{X^{an}}$-module of ${\cal H}^l_{[Z]}({\cal O}_{X^{an}})$, the sheaf ${\cal H}\textup{om}_{{\cal D}_{X^{an}}}({\cal H}^l_{[Z]}({\cal O}_{X^{an}}),{\cal M})$ is on $Z$ the sheaf 0 or $\underline{\mathbb{C}}$, hence ${\cal M}=0$ or ${\cal H}^l_{[Z]}({\cal O}_{X^{an}})$.\\
\indent
Let ${\cal L}$ be an invertible sheaf on $X$, and let ${\cal F}$ be a quasi-coherent left sub-${\cal D}_X$-module of ${\cal H}^l_Z({\cal L})={\cal H}^l_Z({\cal O}_X)\otimes {\cal L}$. Then $j^*({\cal F}\otimes{\cal L}^{\otimes -1})$ is a quasi-coherent sub-${\cal D}_{X^{an}}$-module of ${\cal H}^l_{[Z]}({\cal O}_{X^{an}})$. Since quasi-coherent sheaves are direct limits of their coherent subsheaves, we get that $j^*({\cal F}\otimes{\cal L}^{\otimes -1})=0$, hence ${\cal F}=0$.\\
\indent
Now let $M$ be a strict left sub-$D_{X,{\cal L}}$-module of $H^l_Z(X,{\cal L})=H^0(X,{\cal H}^l_Z({\cal L}))$. Since $X$ is a flag variety, it is ${\cal D}$-affine, and ${\cal D}_{X,{\cal L}}\otimes_{D_{X,{\cal L}}} M$ is a strict quasi-coherent left sub-${\cal D}_X$-module of ${\cal H}^l_Z({\cal L})$, hence $M=0$. Hence $H^l_{z}(X,{\cal L})$ is simple as a left $D_{X,{\cal L}}$-module.
\end{proof}
We also need a way to compare left and right actions of ${\cal D}_X$. The following is proved in \cite{HTT} (1.2.7) :
\begin{proposition}
If $A$ is a ring, let $A^{op}$ be the ring such that $A=A^{op}$ as abelian groups, and the multiplication $\star$ on $A^{op}$ is given by $a\star b=ba$. We have an isomorphism
\[
{\cal D}_X^{op}\simeq \omega_X\otimes{\cal D}_X\otimes\omega_X^{\otimes -1}
\]
where $\omega_X$ is the canonical sheaf of $X$.
\end{proposition}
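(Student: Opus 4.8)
The statement is the standard side-changing isomorphism for ${\cal D}$-modules, and the plan is to produce the isomorphism from the canonical \emph{right} ${\cal D}_X$-module structure on the canonical sheaf. First I would recall that $\omega_X$ carries such a structure: for a local function $f\in{\cal O}_X$ and a local vector field $v\in{\cal T}_X$ one sets $\omega\cdot f=f\omega$ and $\omega\cdot v=-\textrm{Lie}_v(\omega)$, where $\textrm{Lie}_v$ is the Lie derivative. That these formulas respect the defining relations of ${\cal D}_X$ is a local verification: using the Leibniz rule $\textrm{Lie}_v(f\omega)=v(f)\omega+f\,\textrm{Lie}_v(\omega)$ one computes $(\omega\cdot v)\cdot f-(\omega\cdot f)\cdot v=v(f)\,\omega$, which is exactly the relation $vf-fv=v(f)$ of ${\cal D}_X$ transported to the right action, with the minus sign in $-\textrm{Lie}_v$ being precisely what makes the sign come out right. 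I would carry this out once on a chart and invoke that the relations are local.

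Next I would convert this module structure into the desired map. A right ${\cal D}_X$-module structure on $\omega_X$ is the same as a left ${\cal D}_X^{op}$-module structure, that is, a homomorphism of sheaves of rings $\rho:{\cal D}_X^{op}\to{\cal E}nd_{\mathbb{C}}(\omega_X)$ given by $\rho(P)(\omega)=\omega\cdot P$; the identity $\rho(P\star Q)(\omega)=\omega\cdot(QP)=(\omega\cdot Q)\cdot P=\rho(P)\big(\rho(Q)(\omega)\big)$ confirms $\rho$ is a ring homomorphism for the opposite product $\star$. Since each $\rho(P)$ is a differential operator on the line bundle $\omega_X$, the image of $\rho$ lands in the sheaf of differential operators of $\omega_X$, which by the definition recalled earlier in the paper is exactly $\omega_X\otimes_{{\cal O}_X}{\cal D}_X\otimes_{{\cal O}_X}\omega_X^{\otimes -1}$, the right-hand side of the statement. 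Thus $\rho$ is a morphism ${\cal D}_X^{op}\to\omega_X\otimes{\cal D}_X\otimes\omega_X^{\otimes -1}$ of sheaves of rings.

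To finish, I would check that $\rho$ is an isomorphism, which is a local matter. On a chart with coordinates $x_1,\dots,x_n$ trivialising $\omega_X$ by $dx=dx_1\wedge\cdots\wedge dx_n$, one has $\textrm{Lie}_{\partial_i}(f\,dx)=\partial_i(f)\,dx$, so under this trivialisation $\rho$ sends $f\mapsto f$ and $\partial_i\mapsto-\partial_i$, the classical formal transpose, which is patently bijective. More robustly, both sheaves are filtered by order of operators with associated graded $\Sym{\cal T}_X$, and $\rho$ is compatible with these filtrations and induces $\pm\textrm{id}$ on the associated graded; a filtered morphism inducing an isomorphism on the graded pieces is itself an isomorphism, so $\rho$ is the claimed isomorphism of sheaves of rings.

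The only genuine obstacle is global coherence together with the sign bookkeeping. The transpose $\partial_i\mapsto-\partial_i$ depends on the chosen coordinates: under a change of coordinates it acquires a correction by the logarithmic derivative of the Jacobian, and this discrepancy is precisely what is recorded by the transition functions of $\omega_X$. The role of the conjugation by $\omega_X$, equivalently of phrasing the right action through the coordinate-free Lie derivative, is exactly to absorb this correction so that $\rho$ glues to a globally defined map. I would therefore devote the main care to verifying the signs in the right action and to checking that the coordinate discrepancies match the cocycle of $\omega_X$; everything else is formal.
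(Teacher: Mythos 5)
Your proof is correct and follows essentially the same route as the paper, which gives no argument of its own but cites \cite{HTT} (1.2.7): the right ${\cal D}_X$-module structure on $\omega_X$ via $\omega\cdot v=-\textrm{Lie}_v(\omega)$, the induced ring morphism ${\cal D}_X^{op}\to\omega_X\otimes{\cal D}_X\otimes\omega_X^{\otimes -1}$, and the local check (formal transpose $\partial_i\mapsto-\partial_i$, or the associated graded) that it is bijective is precisely the standard proof found there. Your sign computation and the remark that conjugation by $\omega_X$ absorbs the Jacobian discrepancy are both accurate; there is nothing to correct.
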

In particular, ${\cal D}_{X,{\cal L}}^{op}={\cal D}_{X,{\cal L}^{\otimes -1}\otimes\omega_X}$, and we have a equivalence of categories between the quasi-coherent left ${\cal D}_{X,{\cal L}}$-modules and quasi-coherent right ${\cal D}_{X,{\cal L}^{\otimes -1}\otimes\omega_X}$-modules, and when $X$ is a flag variety. We also have an equivalence of categories between the left-$D_{X,{\cal L}}$-modules and the right $D_{X,{\cal L}^{\otimes -1}\otimes\omega_X}$-modules.\\
\indent
We will now look at the cohomology group $H^i(Y,{\cal L})$ when $Y=\overline{\textrm{PGL}_3}$, the cases of $\overline{\textrm{PGL}_3/\textrm{PSO}_3}$ and of $\overline{\textrm{PGL}_6/\textrm{PSp}_6}$ being similar. Let $G',T',B',P',\pi_0$ and $\iota$ as in the section 3.2. Let $\Lambda$ be the weight lattice of $T'$. Let us recall that $X^{us}$ has two connected components
\[
F_1=\{U\in X, \dim(U\cap V)\geq 2\} \textrm{ and } F_2=\{U\in X, \dim(U\cap V^*)\geq 2\}
\]
Let $U\in F_1$. Then for all $b\in B'$, $\dim bU\cap V=\dim U\cap b^{-1}V=\dim U\cap V\geq 2$, hence $F_1$ is $B'$-stable. Since it is closed, it is the closure of a Bruhat cell $X_w$. Since $V^*=w_{0,P'}.V$ (where $w_{0,P'}$ is the longest element in the Weyl group $W^{P'}=W/W_{P'}$), $F_2=w_{0,P'}.F_1$. To find which $\overline{X_w}$ is $F_1$, let us look at its $T'$-fixed points. Since $W^{P'}$ acts by permutations on the basis elements $\{e_1,e_2,e_3,e_1^*,e_2^*,e_3^*\}$, $w.V$ is in $F_1$ if and only if at least two of the $w(e_1),w(e_2),w(e_3)$ are in $\{e_1,e_2,e_3\}$. We will write $s_{i_1\ldots i_k}$ instead of $s_{\alpha_{i_1}}\ldots s_{\alpha_{i_k}}$. The $T'$-fixed points are given by
\begin{center}
\begin{tikzcd}
{}											&V \arrow{d}{s_3}								&\\
											&s_3.V \arrow{ld}{s_2} \arrow{rd}{s_4}			&\\
s_{23}.V \arrow{d}{s_1} \arrow{rd}{s_4}		&												&s_{43}.V \arrow{d}{s_5}\arrow{ld}{s_2}\\
s_{123}.V \arrow{d}{s_4}						&s_{423}.V \arrow{ld}{s_1}\arrow{rd}{s_5}		&s_{543}.V \arrow{d}{s_2}\\
s_{4123}.V \arrow{rd}{s_5}					&												&s_{2543}.V \arrow{ld}{s_1}\\
											&s_{54123}.V
\end{tikzcd}
\end{center}
Then if we let $w=s_{3423}$, we have $X_{w}=B's_{54123}.V$, and $F_1=\overline{X_w}$.\\
\indent
Let us recall that since $Y$ is a wonderful variety of minimal rank, Tchoudjem gave in \cite{Tc1} a description of the cohomology groups $H^i(Y,{\cal L})$ as $\mathfrak{g}$-modules given by
\[
H^i(Y,{\cal L}_{\lambda})\simeq \bigoplus\limits_{J\subset\Sigma_X}\bigoplus\limits_{\substack{\mu\in(\lambda+R_J)\cap\Omega_J\\ \mu+\rho\textrm{ régulier}\\l(\mu)+|J|=i}}V_{\mu^+}^*
\]
where for all $J\subset\Sigma_X$,
\[
R_J:=\sum\limits_{\gamma\in J}\mathbb{Z}_{>0}\gamma+\sum\limits_{\gamma\in\Sigma_X\setminus J}Z_{\leq 0}\gamma
\]
and
\[
\Omega_J:=\{\mu\in\Gamma|\{\gamma\in\Sigma_X|(\mu+\rho,\gamma)<0\}=J\}
\]
In particular, the cohomology groups $H^i(Y,{\cal L})$ can be nonzero only for $i=0,3,5$ or 8.
\begin{remark}
$\overline{\textrm{PGL}_3/\textrm{PSO}_3}$ is not a wonderful variety of minimal rank, but Tchoudjem has shown in \cite{Tc2} that the cohomology groups $H^i(Y,{\cal L})$ have a similar description.
\end{remark}
\indent
Let $\tilde{\varpi_i}=(\varpi_i,\varpi_i')$ for $i=1,2$. Let us recall that if $U\subset Y$ is open, we denote by $\overline{\cal L}(\pi_0^{-1}(U))_n$ the part of degree $n$ for the $\mathbb{C}^*$-action of $\overline{\cal L}(\pi_0^{-1}(U))$. We can describe more precisely what the cosection $s$ is with the following proposition :
\begin{proposition}
Up to renumbering, for all open $U\subset Y$ :
\[
\begin{array}{rcl}
{\cal L}_{\tilde{\varpi_1}}(U)&=&\overline{\cal L}_{\varpi_3}(\pi_0^{-1}(U))_{-1}\\
{\cal L}_{\tilde{\varpi_2}}(U)&=&\overline{\cal L}_{\varpi_3}(\pi_0^{-1}(U))_{1}\\
\end{array}
\]
\end{proposition}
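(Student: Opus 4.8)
The plan is to exploit the fact, already established, that every invertible sheaf on $Y$ is of the form $\overline{\cal L}_{k\varpi_3}(\pi_0^{-1}(-))_n$ for a pair $(k,n)$, this pair being determined by the composite of the cosection $s$ with the identification $\textrm{Pic}^{\mathbb{C}^*}(X)\simeq\mathbb{Z}^2$. It therefore suffices to pin down $(k_i,n_i)$ for each ${\cal L}_{\tilde{\varpi_i}}$, and I would read these two integers off from the global sections viewed as $G\times G$-modules graded by the $\mathbb{C}^*$-weight: the index $k$ is detected by the power of $\overline{\cal L}_{\varpi_3}$ in which the relevant irreducible first occurs, and $n$ by the weight of the graded piece in which it sits.

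First I would compute the two sides. On $Y$, since $\tilde{\varpi_i}$ is minimal among dominant weights for the order given by the spherical roots $\gamma_1,\gamma_2$ (indeed $\tilde{\varpi_i}-\gamma_j$ is never dominant), the De Concini--Procesi description of the sections of an ample sheaf forces $H^0(Y,{\cal L}_{\tilde{\varpi_i}})$ to be the single irreducible module $V_{(\varpi_i,\varpi_i')}^*$, with no lower-order terms; this is also what Tchoudjem's formula gives in degree $0$. On $X$, Borel--Weil for $X=G'/P'$ gives $H^0(X,\overline{\cal L}_{\varpi_3})=(\Lambda^3(V\oplus V^*))^*$. I would then feed in the $G\times G$-decomposition already obtained in the proof that $d_0$ is an isomorphism,
\[
\Lambda^3(V\oplus V^*)=V_{(\varpi_2,\varpi_2')}\oplus V_{(\varpi_1,\varpi_1')}\oplus\mathbb{C}\oplus\mathbb{C},
\]
whose four summands carry $\mathbb{C}^*$-weights $1,-1,3,-3$. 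Dualizing, with the convention (consistent with the computed weight $-2$ of $F_1$) that $(\Lambda U)^*$ carries the same $\mathbb{C}^*$-weight as $\Lambda U$, the weight $-1$ graded piece of $H^0(X,\overline{\cal L}_{\varpi_3})$ is the $G\times G$-dual $V_{(\varpi_1,\varpi_1')}^*$, and the weight $+1$ piece is $V_{(\varpi_2,\varpi_2')}^*$; the two one-dimensional pieces sit in weights $\pm3$ and are irrelevant here.

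Comparing, $H^0(Y,{\cal L}_{\tilde{\varpi_1}})=V_{(\varpi_1,\varpi_1')}^*$ is exactly the weight $-1$ piece and $H^0(Y,{\cal L}_{\tilde{\varpi_2}})=V_{(\varpi_2,\varpi_2')}^*$ the weight $+1$ piece, each occurring already at $k=1$; this forces $(k_1,n_1)=(1,-1)$ and $(k_2,n_2)=(1,1)$, which is the assertion (the labelling ambiguity of the two colors being absorbed by the ``up to renumbering''). As a cross-check one can run the color route: the canonical section of ${\cal L}_{\tilde{\varpi_i}}$ on the open orbit is realized by the Plücker functions $F_1,F_2$ cutting out $d_1,d_2$, of $\mathbb{C}^*$-weights $-2,-4$, so that $k_i=1$ and ${\cal O}_X(D_i)^{\mathbb{C}^*}\simeq\overline{\cal L}_{\varpi_3}(\Omega_i)_{3-2^i}$ gives weights $3-2=1$ and $3-4=-1$. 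The main obstacle, and the only delicate point, is the $\mathbb{C}^*$-weight bookkeeping across the dualization $H^0=(\Lambda^3(V\oplus V^*))^*$: one must fix the weight convention consistently and check that the graded $G\times G$-module structure determines $(k,n)$ uniquely, i.e. that $V_{(\varpi_i,\varpi_i')}^*$ appears as a full graded piece only for $k=1$. The irreducibility of $H^0(Y,{\cal L}_{\tilde{\varpi_i}})$, namely the absence of lower-order summands, is immediate from the minimality of $\tilde{\varpi_i}$ and presents no difficulty.
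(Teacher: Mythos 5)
Your main argument has a genuine gap, and the delicate point you isolate is in fact false: $V_{(\varpi_i,\varpi_i')}^*$ occurs as a full graded piece of $H^0(X,\overline{\cal L}_{k\varpi_3})$ for \emph{every} $k\geq 1$, not only for $k=1$. Indeed, restricting $V_{k\varpi_3}$ to the Levi $\textrm{GL}_3\times\textrm{GL}_3$ of $P'$, the Littlewood--Richardson rule for the rectangular partition $(k,k,k)$ gives the multiplicity-free decomposition $\bigoplus_{\lambda\subseteq(k,k,k)}V_{\lambda}\boxtimes V_{\lambda^{\vee}}$ with $\lambda^{\vee}_i=k-\lambda_{4-i}$, the summand indexed by $\lambda$ carrying $\mathbb{C}^*$-weight $2|\lambda|-3k$. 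The graded piece corresponding to $|\lambda|=1$ is the single irreducible $V_{(1,0,0)}\boxtimes V_{(k,k,k-1)}$, which as an $\textrm{SL}_3\times\textrm{SL}_3$-module is $V\otimes\Lambda^2V^*=V_{(\varpi_1,\varpi_1')}$ for every $k$; it merely sits in degree $2-3k$ instead of $-1$. Consequently the abstract isomorphism type of $H^0(Y,{\cal L}_{\tilde{\varpi_1}})$ as a graded $G\times G$-module is compatible with $(k,n)=(1,\pm 1),(2,\pm 4),(3,\pm 7),\ldots$, and comparing global sections cannot determine $(k_1,n_1)$; the same applies to $\tilde{\varpi_2}$ via $|\lambda|=3k-1$. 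Your ``cross-check'' through the colors is closer to a correct proof, but it too assumes the crucial fact $k_i=1$ (equivalently, that $D_i=\overline{\pi_0^{-1}(d_i)}$ is a hyperplane section of the Grassmannian) without justification; primality of $D_i$ does not give degree one, so this needs an argument, e.g.\ that $D_i$ is contained in, hence equal to, the irreducible Schubert divisor cut out by the Plücker coordinate appearing in the numerator of $F_i$.

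The paper closes exactly this gap by a different, more formal route that uses no branching beyond the $k=1$ decomposition already computed: since $s$ is a group morphism, $\lambda\mapsto(k_\lambda,n_\lambda)$ is additive on the semigroup of weights with nonzero sections, so $k_\lambda=1$ can occur only at the generators $\tilde{\varpi_1},\tilde{\varpi_2},\gamma_1,\gamma_2$; at least one of these four has $k=1$ because every $\mathbb{C}^*$-isotypic component of $H^0(X,\overline{\cal L}_{\varpi_3})$ has nonzero weight, so $q(\overline{\cal L}_{\varpi_3})\neq{\cal O}_Y$; the relations $k_{\gamma_i}=2k_i-k_j$ then force all four to equal $1$; and finally $n_{\gamma_i}=2n_i-n_j$ together with $\{n_1,n_2,n_{\gamma_1},n_{\gamma_2}\}=\{-3,-1,1,3\}$ (the weights actually occurring in $H^0(X,\overline{\cal L}_{\varpi_3})$) pins down $n_1=-1$, $n_2=1$ up to renumbering. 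If you wish to keep your representation-theoretic strategy, you must first bound $k$ by some such independent argument; the global sections alone will not do it.
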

\begin{proof}
We already know there are $k_i$ and $n_i$ such that
\[
{\cal L}_{\tilde{\varpi_1}}(U)=\overline{\cal L}_{k_i\varpi_3}(\pi_0^{-1}(U))_{n_i}
\]
We have $H^0(Y,{\cal L}_{\tilde{\varpi_i}})=V_{\tilde{\varpi_i}}^*\neq 0$, hence $k_i\varpi_3$ is dominant, and $k_i\geq 0$. Since $\overline{\cal L}_0={\cal O}_X$, its global sections are isomorphic to $\mathbb{C}$, and the $\mathbb{C}^*$-semi-invariants of its global sections are concentrated in degree 0. Since for all open $U\subset Y$, ${\cal O}_Y(U)={\cal O}_X(\pi_0^{-1}(U))_0$, we have $k_i>0$.\\
\indent
Let $\Lambda'$ be the weight lattice of $\textrm{SL}_3\times\textrm{SL}_3$. Let us remark that the set of $\lambda\in\Lambda'$ such that $H^0(Y,{\cal L}_{\lambda})\neq 0$, which will be denoted by $SG(\Lambda')$, is the convex cone spanned by $\gamma_1$ and $\gamma_2$ intersected with $\Lambda '$. For all $\lambda\in SG(\Lambda')$, there exists non-negative integers $a_1,a_2,b_1,b_2$ such that $\lambda=a_1\tilde{\varpi_1}+a_2\tilde{\varpi_2}+b_1\gamma_1+b_2\gamma_2$. Moreover, for all $\lambda\in SG(\Lambda')\setminus 0$, we have
\[
{\cal L}_{\lambda}(U)=\overline{\cal L}_{k_{\lambda}\varpi_3}(\pi_0^{-1}(U))_{n_{\lambda}}
\]
with $n_{\lambda}\in\mathbb{Z}$ and $k_{\lambda}>a_1k_1+a_2k_2+b_1k_{\gamma_1}+b_2k_{\gamma_2}>0$, hence $k_{\lambda}=1$ is possible only for $\lambda=\tilde{\varpi_1}, \tilde{\varpi_2}, \gamma_1$ or $\gamma_2$. Let $k'_i=k_{\gamma_i}$, and $n'_i=n_{\gamma_i}$. Since $H^0(X,\overline{\cal L}_{\varpi_3})=\Lambda^3(\mathbb{C}^3\oplus\mathbb{C}^3)$ where $\mathbb{C}^*$ acts with weight 1 on the first copy of $\mathbb{C}^3$ and -1 on the second one, its $\mathbb{C}^*$-semi-invariants have weight -3, -1, 1 or 3. In particular, if $\mathbb{C}^*$ acts with one of these weights on trivializing sections of $\overline{\cal L}_{\varpi_3}$, $q(\overline{\cal L}_{\varpi_3})\neq{\cal O}_Y$. Hence  at least one of the $k_i$ or $k'_i$ equals 1. Since $k'_i=2k_i-k_j$ with $i\neq j$, if one of the $k'_i$ or $k_i$ equals 1, they all are equal to 1. Moreover, $n'_i=2n_i-n_j$ and $\{n_1,n_2,n'_1,n'_2\}=\{-3;-1;1;3\}$, hence up to renumbering, we have  $n_1=-1,n_2=1,n'_1=-3$ et $n'_2=3$.
\end{proof}
Let us remark that since cohomology groups $H^i(X,\overline{\cal L})$ can be nonzero only when $i=0$ or 9, we have isomorphisms
\[
H^3(Y,{\cal L})=H^4(X^{ss}(0),s({\cal L}))^{\mathbb{C}^*}=H^4_{X^{us}}(X,s({\cal L}))^{\mathbb{C}^*}
\]
Since $X^{us}$ is a disjoint union of the two closed subsets $F_1$ and $F_2$, we have
\[
H^4_{X^{us}}(X,\overline{\cal L})=H^4_{F_1}(X,\overline{\cal L})\oplus H^4_{F_2}(X,\overline{\cal L})
\]
$H^4_{F_1}(X,\overline{\cal L}_{k\varpi_3})$ is the kernel of
\[
H^4_{X_w}(X,\overline{\cal L}_{k\varpi_3})\to H^5_{X_{s_1w}}(X,\overline{\cal L}_{k\varpi_3})\oplus H^5_{X_{s_5w}}(X,\overline{\cal L}_{k\varpi_3})
\]
Hence for all $\lambda\in\Lambda$
\[
\begin{tabular}{rcl}
$[H^4_{X_w}(X,\overline{\cal L}_{k\varpi_3})](\lambda)$&$\geq $&$[H^4_{F_1}(X,\overline{\cal L}_{k\varpi_3})](\lambda)$\\
$[H^4_{F_1}(X,\overline{\cal L}_{k\varpi_3})](\lambda)$&$\geq $&$[H^4_{X_w}(X,\overline{\cal L}_{k\varpi_3})](\lambda)-[H^5_{X_{s_1w}}(X,\overline{\cal L}_{k\varpi_3})](\lambda)-[H^5_{X_{s_5w}}(X,\overline{\cal L}_{k\varpi_3})](\lambda)$
\end{tabular}
\]
We will now compute these characters. We have :\\
\textbullet \indent $ww_{0,P'}(k\varpi_3)=\frac{k}{2}(\alpha_3-\alpha_5-\alpha_1)$ ;\\
\textbullet \indent $s_1ww_{0,P'}(k\varpi_3)=\frac{k}{2}(\alpha_3-\alpha_5+\alpha_1)$ ;\\
\textbullet \indent $s_5ww_{0,P'}(k\varpi_3)=\frac{k}{2}(\alpha_3+\alpha_5-\alpha_1)$ ;\\
\textbullet \indent $K(w)=\{\alpha_2+\alpha_3,\alpha_3,\alpha_3+\alpha_4,\alpha_2+\alpha_3+\alpha_4\}$ ;\\
\textbullet \indent $L(w)=\{\alpha_1,\alpha_1+\alpha_2,\alpha_4+\alpha_5,\alpha_5,\alpha_1+\alpha_2+\alpha_3+\alpha_4+\alpha_5\}$ ;\\
\textbullet \indent $K(s_1w)=\{\alpha_1,\alpha_1+\alpha_2+\alpha_3,\alpha_3,\alpha_1+\alpha_2+\alpha_3+\alpha_4,\alpha_3+\alpha_4\}$ ;\\
\textbullet \indent $L(s_1w)=\{\alpha_2,\alpha_4+\alpha_5,\alpha_5,\alpha_2+\alpha_3+\alpha_4+\alpha_5\}$ ;\\
\textbullet \indent $K(s_5w)=\{\alpha_2+\alpha_3,\alpha_3,\alpha_2+\alpha_3+\alpha_4+\alpha_5,\alpha_3+\alpha_4+\alpha_5,\alpha_5\}$ ;\\
\textbullet \indent $L(s_5w)=\{\alpha_1,\alpha_1+\alpha_2,\alpha_4,\alpha_1+\alpha_2+\alpha_3+\alpha_4\}$\\
and
\[
\begin{tabular}{rcl}
$[H^4_{X_w}(X,\overline{\cal L}_{k\varpi_3})]$&$=$&$e^{\frac{k}{2}(\alpha_3-\alpha_5-\alpha_1)}e^{\alpha_2+\alpha_3}e^{\alpha_3}e^{\alpha_3+\alpha_4}e^{\alpha_2+\alpha_3+\alpha_4}\prod\limits_{\lambda\in J(w)}\sum\limits_{k\geq 0}e^{\lambda}$\\
$[H^5_{X_{s_1w}}(X,\overline{\cal L}_{k\varpi_3})]$&$=$&$e^{\frac{k}{2}(\alpha_3-\alpha_5+\alpha_1)}e^{\alpha_1}e^{\alpha_1+\alpha_2+\alpha_3}e^{\alpha_3}e^{\alpha_3+\alpha_4}e^{\alpha_1+\alpha_2+\alpha_3+\alpha_4}\prod\limits_{\lambda\in J(s_1w)}\sum\limits_{k\geq 0}e^{\lambda}$\\
$[H^5_{X_{s_5w}}(X,\overline{\cal L}_{k\varpi_3})]$&$=$&$e^{\frac{k}{2}(\alpha_3+\alpha_5-\alpha_1)}e^{\alpha_2+\alpha_3}e^{\alpha_3}e^{\alpha_3+\alpha_4+\alpha_5}e^{\alpha_2+\alpha_3+\alpha_4+\alpha_5}e^{\alpha_5}\prod\limits_{\lambda\in J(s_5w)}\sum\limits_{k\geq 0}e^{\lambda}$
\end{tabular}
\]
Hence we can see that the $\mathbb{C}^*$-invariants of $H^4_{F_1}(X,\overline{\cal L}_{k\varpi_3})$ have weights bigger than $8+k$. Since we have $[H^i_{F_2}(X,\overline{\cal L})]=w_{0,P'}[H^i_Z(X,w_{0,P'}\overline{\cal L})]$, the $\mathbb{C}^*$-invariants of $H^4_{F_2}(X,\overline{\cal L}_{k\varpi_3})$ have weights smaller than $-8+k$. Hence for all invertible sheaf ${\cal L}$ on $Y$, at least one of the $H^4_{F_1}(X,s({\cal L}))^{\mathbb{C}^*}$ and $H^4_{F_2}(X,s({\cal L}))^{\mathbb{C}^*}$ is zero. Now we can prove
\begin{theoreme}
For all invertible sheaf ${\cal L}$ on $Y$ and for all $i$, $H^i(Y,{\cal L})$ is either 0 or simple as a left $D_{Y,{\cal L}}$-module.
\end{theoreme}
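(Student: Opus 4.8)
The plan is to combine three inputs: the vanishing of $H^i(Y,{\cal L})$ outside a few degrees, the simplicity statement for local cohomology of flag varieties, and Serre duality to swap degrees. First I would record that, by Tchoudjem's character formula (and its analogue for $\overline{\textrm{PGL}_3/\textrm{PSO}_3}$), $H^i(Y,{\cal L})=0$ unless $i\in\{0,3,5,8\}$, so only these four degrees need to be treated. The degree $i=0$ is exactly the theorem of Section 3. For $i=3$ I would use the isomorphism $H^3(Y,{\cal L})=H^4_{X^{us}}(X,s({\cal L}))^{\mathbb{C}^*}$ established above, together with the decomposition $H^4_{X^{us}}(X,s({\cal L}))=H^4_{F_1}(X,s({\cal L}))\oplus H^4_{F_2}(X,s({\cal L}))$ coming from $X^{us}=F_1\sqcup F_2$. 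Since each $F_j=\overline{X_w}$ is a connected Schubert variety of codimension $4$ and $s({\cal L})=\overline{\cal L}_{k\varpi_3}$ is locally free, the proposition on local cohomology of flag varieties gives that $H^4_{F_j}(X,\overline{\cal L}_{k\varpi_3})$ is simple as a left $D_{X,k\varpi_3}$-module.

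Next I would pass from $D_{X,k\varpi_3}$-simplicity to $D_{Y,{\cal L}}$-simplicity of the relevant weight space, mimicking the degree-zero proof. Writing $M:=H^4_{F_j}(X,\overline{\cal L}_{k\varpi_3})$, the module $M$ is $\mathbb{Z}$-graded by the $\mathbb{C}^*$-weight compatibly with the grading of $D_{X,k\varpi_3}$, and an elementary graded argument shows each nonzero component $M_n$ is simple over the degree-zero subalgebra $D_{X,k\varpi_3}^{\mathbb{C}^*}$: if $N\subset M_n$ is a nonzero $D_{X,k\varpi_3}^{\mathbb{C}^*}$-submodule then $D_{X,k\varpi_3}\cdot N=M$ by simplicity, whence $M_n=(D_{X,k\varpi_3}\cdot N)_n=D_{X,k\varpi_3}^{\mathbb{C}^*}\cdot N=N$. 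Using the identification $D_{X,k\varpi_3}^{\mathbb{C}^*}\simeq\mathbb{C}[t\partial_t]\otimes D_{Y,{\cal L}}$ and the fact that $t\partial_t$ acts by a scalar on $M_n$, exactly as in Section 3, I conclude that $H^4_{F_j}(X,s({\cal L}))^{\mathbb{C}^*}=M_n$ is simple over $D_{Y,{\cal L}}$. Finally, the weight estimates computed above force at least one of the two summands $H^4_{F_j}(X,s({\cal L}))^{\mathbb{C}^*}$ to vanish for every ${\cal L}$, so $H^3(Y,{\cal L})$ is $0$ or simple.

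For $i=5$ and $i=8$ I would invoke Serre duality on the smooth projective $8$-fold $Y$, namely $H^i(Y,{\cal L})\cong H^{8-i}(Y,\omega_Y\otimes{\cal L}^{\otimes -1})^*$. The point is that this isomorphism is compatible with differential operators: a global operator $P$ on ${\cal L}$ acts on $H^i(Y,{\cal L})$ as the transpose of the action of its adjoint on $H^{8-i}(Y,\omega_Y\otimes{\cal L}^{\otimes -1})$, and by the isomorphism ${\cal D}_{Y,{\cal L}}^{op}\simeq{\cal D}_{Y,\omega_Y\otimes{\cal L}^{\otimes -1}}$ this adjoint is an operator on $\omega_Y\otimes{\cal L}^{\otimes -1}$. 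Hence $H^i(Y,{\cal L})$ is the contragredient of $H^{8-i}(Y,\omega_Y\otimes{\cal L}^{\otimes -1})$ via $D_{Y,{\cal L}}=D_{Y,\omega_Y\otimes{\cal L}^{\otimes -1}}^{op}$, and since all these cohomology groups are finite-dimensional the contragredient of a simple module is simple. Applying the $i=3$ case to the invertible sheaf $\omega_Y\otimes{\cal L}^{\otimes -1}$ then settles $i=5$, and the $i=0$ case settles $i=8$.

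The hard part will be the degree $i=3$ step. The delicate points there are to establish that taking $\mathbb{C}^*$-invariants turns the simple $D_{X,k\varpi_3}$-module $H^4_{F_j}(X,\overline{\cal L}_{k\varpi_3})$ into a simple $D_{Y,{\cal L}}$-module --- which forces me to match the graded-simplicity argument precisely with the identification of $D_{X,k\varpi_3}^{\mathbb{C}^*}$ and $D_{Y,{\cal L}}$ --- and to verify that the explicit character computations for $H^4_{F_1}$ and $H^4_{F_2}$ really do kill one of the two $\mathbb{C}^*$-invariant summands for every ${\cal L}$. In the duality steps the only subtlety is the compatibility of the Serre pairing with the operator action, i.e.\ that it intertwines $P$ with its adjoint under ${\cal D}_{Y,{\cal L}}^{op}\simeq{\cal D}_{Y,\omega_Y\otimes{\cal L}^{\otimes -1}}$; this is formal but must be checked.
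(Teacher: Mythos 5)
Your proposal follows essentially the same route as the paper's proof: reduction to degrees $0,3,5,8$ via Tchoudjem's formula, the identification $H^3(Y,{\cal L})\simeq H^4_{X^{us}}(X,\overline{\cal L}_{k\varpi_3})_n$ with the $F_1\oplus F_2$ decomposition, simplicity of $H^4_{F_j}(X,\overline{\cal L}_{k\varpi_3})$ from connectedness of the Schubert variety $F_j$, the weight estimates killing one summand, passage to $\mathbb{C}^*$-invariants via $D_{X,k\varpi_3}^{\mathbb{C}^*}\simeq\mathbb{C}[t\partial_t]\otimes D_{Y,{\cal L}}$ with $t\partial_t$ acting by a scalar, and Serre duality combined with ${\cal D}_{Y,{\cal L}}^{op}\simeq{\cal D}_{Y,{\cal L}^{\otimes-1}\otimes\omega_Y}$ for $i=5,8$. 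The only (welcome) additions are that you spell out the elementary graded argument for why a weight space of a simple graded module is simple over the degree-zero subalgebra, which the paper merely asserts.
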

\begin{proof}
The case $i=0$ has already be treated in section 3.2. Assume for now that $i=3$, and that $H^i(Y,{\cal L})\neq 0$. There are two integers $k$ and $n$ such that for all open $U\subset Y$, we have
\[
{\cal L}(U)\simeq\overline{\cal L}_{k\varpi_3}(\pi_0(U))_n
\]
and
\[
H^3(Y,{\cal L})=H^3(X^{ss}(0),\overline{\cal L}_{k\varpi_3})_n=H^4_{X^{us}}(X,\overline{\cal L}_{k\varpi_3})_n
\]
We have just seen that there exists $i\in\{1,2\}$ such that $H^4_{X^{us}}(X,\overline{\cal L}_{k\varpi_3})_n=H^4_{F_i}(X,\overline{\cal L}_{k\varpi_3})_n$, and $F_i$ is connected. Hence $H^4_{F_i}(X,\overline{\cal L}_{k\varpi_3})$ is a simple as a left $D_{X,\overline{\cal L}_{k\varpi_3}}$-module, and $H^4_{F_i}(X,\overline{\cal L}_{k\varpi_3})_n$ is a simple left $D_{X,\overline{\cal L}_{k\varpi_3}}^{\mathbb{C}^*}$-module. In particular, $H^4_{F_i}(X,{\cal O}_X)_n$ is a simple left $D_{X}^{\mathbb{C}^*}$-module. Let $\Omega$ be a $\mathbb{C}^*$-invariant open subset of $X$ containing $F_i$. Since ${\cal H}^4_{F_i}(\overline{\cal L}_{k\varpi_3})$ has support in $F_i$, we have $H^4_{F_i}(X,\overline{\cal L}_{k\varpi_3})={\cal H}^4_{F_i}(\overline{\cal L}_{k\varpi_3})(\Omega)$. If ${\cal F}$ if a sheaf on $X$, let 
\[
{\cal F}^{glob}(\Omega):=\{\sigma_{|\Omega},\sigma\in\Gamma(X,{\cal F})\}
\]
Then ${\cal H}^4_{F_i}(\overline{\cal L}_{k\varpi_3})(\Omega)_n$ is a simple left ${\cal D}_X^{glob}(\Omega)_0$-module. Moreover, since $\overline{\cal L}_{k\varpi_3}\otimes {\cal D}_X\otimes\overline{\cal L}_{k\varpi_3}^{\otimes -1}$ is isomorphic to ${\cal D}_X$ as a ${\cal O}_X$-module, we have
\[
((\overline{\cal L}_{k\varpi_3})_n\otimes ({\cal D}_X)_0\otimes(\overline{\cal L}_{k\varpi_3}^{\otimes -1})_{-n})^{glob}(\Omega)=\overline{\cal L}_{k\varpi_3}(\Omega)_n\otimes {\cal D}_X^{glob}(\Omega)_0\otimes\overline{\cal L}_{k\varpi_3}^{\otimes -1}(\Omega)_{-n}
\]
Since
\[
{\cal H}^4_{F_i}(\overline{\cal L}_{k\varpi_3})={\cal H}^4_{F_i}({\cal O}_X)\otimes\overline{\cal L}_{k\varpi_3}
\]
we get that ${\cal H}^4_{F_i}(\overline{\cal L}_{k\varpi_3})(\Omega)_n$ is a simple left $\overline{\cal L}_{k\varpi_3}(\Omega)_n\otimes {\cal D}_X^{glob}(\Omega)_0\otimes\overline{\cal L}_{k\varpi_3}^{\otimes -1}(\Omega)_{-n}$-module, ie. $H^4_{F_i}(X,\overline{\cal L}_{k\varpi_3})_n$ is a simple left $H^0\Big(X,(\overline{\cal L}_{k\varpi_3})_n\otimes ({\cal D}_X)_0\otimes(\overline{\cal L}_{k\varpi_3}^{\otimes -1})_{-n}\Big)$-module. Since 
\[
H^4_{F_i}(X,\overline{\cal L}_{k\varpi_3})_n=H^3(Y,{\cal L})
\]
and 
\[
H^0(X,(\overline{\cal L}_{k\varpi_3})_n\otimes ({\cal D}_X)_0\otimes(\overline{\cal L}_{k\varpi_3}^{\otimes -1})_{-n})\simeq D_{Y,{\cal L}}\otimes \mathbb{C}[t\partial_{t}]
\]
and since the action of $t\partial_t$ on $H^3(Y,{\cal L})$ is trivial, $H^3(Y,{\cal L})$ is a simple left $D_{Y,{\cal L}}$-module.\\
\indent
If $i=5$ or 8, we will use the Serre duality
\[
H^i(Y,{\cal L}_{\lambda})\simeq H^{n-i}(Y,{\cal L}_{-\lambda-\mu})^*
\]
Since $H^{n-i}(Y,{\cal L}_{-\lambda-\mu})$ is a simple left $D_{Y,{\cal L}^{\otimes -1}\otimes \omega_Y}$-module, its dual is a simple right $D_{Y,{\cal L}^{\otimes -1}\otimes \omega_Y}$-module, hence it is a simple left $D_{Y,{\cal L}}$-module.
\end{proof}

\bibliographystyle{plain}

\end{document}